\tikzset{slopearrow/.style={sloped, anchor=south}}
\newtheorem{lem}{Lemma}[section]
\newtheorem{thm}[lem]{Theorem}
\newtheorem{prop}[lem]{Proposition}
\newtheorem{cor}[lem]{Corollary}
\newtheorem{defn}[lem]{Definition}
\theoremstyle{definition}
\newtheorem{ex}[lem]{Example}
\newtheorem{rmk}[lem]{Remark}
\def\CC{\mathbb C}
\def\NN{\mathbb N}
\def\RR{\mathbb R}
\def\ZZ{\mathbb Z}
\def\QQ{\mathbb Q}
\def\FF{\mathbb F}
\def\k{k}
\DeclareMathOperator\GL{GL}
\DeclareMathOperator\SO{SO}
\DeclareMathOperator\disc{disc}
\DeclareMathOperator\Tr{Tr}
\DeclareMathOperator\End{End}
\DeclareMathOperator\ord{ord}
\DeclareFontFamily{U}{matha}{\hyphenchar\font45}
\DeclareFontShape{U}{matha}{m}{n}{
	<5> <6> <7> <8> <9> <10> gen * matha
	<10.95> matha10 <12> <14.4> <17.28> <20.74> <24.88> matha12
}{}
\DeclareSymbolFont{matha}{U}{matha}{m}{n}
\DeclareFontFamily{U}{mathx}{\hyphenchar\font45}
\DeclareFontShape{U}{mathx}{m}{n}{
	<5> <6> <7> <8> <9> <10>
	<10.95> <12> <14.4> <17.28> <20.74> <24.88>
	mathx10
}{}
\DeclareSymbolFont{mathx}{U}{mathx}{m}{n}
\DeclareMathSymbol{\obot}         {2}{matha}{"6B}
\DeclareMathSymbol{\bigobot}       {1}{mathx}{"CB}
\def\black{\color{black}}
\title[Elements of prescribed norm in maximal orders]{On elements of prescribed norm in maximal orders of a quaternion algebra}
\author{Eyal Z.~Goren and Jonathan Love}
\begin{document}
	
	\begin{abstract}
		Let~$\mathcal{O}$ be a maximal order in the quaternion algebra over~$\QQ$ ramified at~$p$ and~$\infty$. We prove two theorems that allow us to recover the structure of~$\mathcal{O}$ from limited information. The first says that for any infinite set~$S$ of integers coprime to~$p$,~$\mathcal{O}$ is spanned as a $\ZZ$-module by elements with norm in~$S$. The second says that~$\mathcal{O}$ is determined up to isomorphism by its theta function.
	\end{abstract}
	
	\maketitle
	
	\section{Introduction}
	
	Let~$p$ be a prime. Up to isomorphism there is a unique quaternion algebra~$B_p$ over~$\QQ$  ramified at exactly~$p$ and~$\infty$. The quaternion algebra~$B_p$ comes equipped with a canonical involution $x\mapsto \overline{x}$, a norm $x\mapsto N(x)\coloneqq x\overline{x}$, and a trace $x\mapsto \Tr(x)\coloneqq x+\overline{x}$. 
	
	There will typically be many non-isomorphic maximal orders in~$B_p$: the number of isomorphism classes of maximal orders in~$B_p$ (the \emph{type number} of a maximal order)
	is between~$\frac{p-1}{24}$ and~$\frac{p+13}{12}$ inclusive \cite[Exercise 30.6 and Proposition 30.9.2]{voight}.
	This paper presents two theorems, each of which allows one to recover information about a maximal order~$\mathcal{O}$ in~$B_p$, given only information about elements in~$\mathcal{O}$ with prescribed norms.
	
	\begin{thm}\label{thm:genbylpower}
		Let~$\mathcal{O}$ be a maximal order in~$B_p$, and let~$S$ be an infinite set of positive integers coprime to~$p$. There is a generating set for~$\mathcal{O}$ as a~$\ZZ$-module consisting of elements with norm in~$S$.
	\end{thm}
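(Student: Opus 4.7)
The plan is to show that the $\ZZ$-submodule $L_S \subseteq \mathcal{O}$ generated by $\{\alpha \in \mathcal{O} : N(\alpha) \in S\}$ exhausts $\mathcal{O}$. Suppose for contradiction $L_S \subsetneq \mathcal{O}$. Then there is a sublattice $M$ with $L_S \subseteq M \subsetneq \mathcal{O}$ of one of two extremal types: either (i) $M = V \cap \mathcal{O}$ for some proper $\QQ$-subspace $V \subsetneq B_p$ (when $L_S$ has $\ZZ$-rank $<4$), or (ii) $M$ has full rank and $[\mathcal{O}:M] = \ell$ for some prime $\ell$ (when $L_S$ has full rank but index $>1$). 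In both situations I aim for a contradiction by showing that for all but finitely many $n$ coprime to $p$, some $\alpha \in \mathcal{O} \setminus M$ has $N(\alpha) = n$; then since $S$ is infinite, some $n \in S$ supplies an element of norm $n$ outside $M$, contradicting $L_S \subseteq M$.

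To compare the counts $r_\mathcal{O}(n) \coloneqq \#\{\alpha \in \mathcal{O} : N(\alpha) = n\}$ and $r_M(n) \coloneqq \#\{\alpha \in M : N(\alpha) = n\}$, I would use the theta series $\theta_\mathcal{O}$ and $\theta_M$ as weight-$2$ modular forms. For case (i), a crude comparison suffices: a rank-$\leq 3$ positive-definite lattice represents each $n$ at most $O(n^{1/2+\epsilon})$ times, whereas $r_\mathcal{O}(n) \gg n^{1-\epsilon}$ for $n$ coprime to $p$ by Eichler's asymptotic for $\theta_\mathcal{O}$. For case (ii), the difference $\theta_\mathcal{O} - \theta_M$ is a non-zero weight-$2$ form with non-negative Fourier coefficients; its Eisenstein component cannot vanish, since otherwise one has a non-zero cusp form with non-negative coefficients, excluded by a Landau-type argument on the Dirichlet series using Deligne's bound. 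Hence the $n$-th coefficient of $\theta_\mathcal{O} - \theta_M$ grows like $\sigma_1(n) \gg n^{1-\epsilon}$ for $n$ coprime to the level, dominating the cuspidal contribution.

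The main obstacle is case (ii) when $n \in S$ shares common factors with the level of $\theta_M$ — in particular the delicate case $\ell = p$, where $\mathcal{O} \otimes \ZZ_p$ is the maximal order in the ramified division algebra and local sublattices of index $p$ have specialized structure. For such $n$ one needs a more careful local analysis at $\ell$: either refining the theta-series argument with explicit local Euler factors, or working via a direct equidistribution statement showing that for large $n$ coprime to $p$ in the appropriate residue class at $\ell$, the elements of $\mathcal{O}$ of norm $n$ populate every coset of $M$ in $\mathcal{O}$.
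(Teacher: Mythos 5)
Your overall strategy (assume the span $L_S$ is proper, pass to a maximal proper sublattice $M$, and show that for infinitely many $n\in S$ some element of norm $n$ lies outside $M$) is sound, and your case (i) works: for $n$ coprime to $p$ the Eisenstein part of $\theta_{\mathcal{O}}$ contributes $\frac{24}{p-1}\sigma_1(n)\gg_p n$, which dominates the $O(n^{1/2+\epsilon})$ representation count of any rank $\leq 3$ positive-definite lattice. The genuine gap is in case (ii), and it sits exactly where the theorem has content. When $[\mathcal{O}:M]=\ell$, your Eisenstein-versus-cusp comparison only controls $a_n(\theta_{\mathcal{O}}-\theta_M)$ for $n$ coprime to the level of $\theta_M$, which is divisible by $\ell$. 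But $S$ is only assumed coprime to $p$; in the motivating special case $S=\{\ell^k\}$ \emph{every} element of $S$ beyond $1$ is divisible by $\ell$, so your argument says nothing at all about the relevant coefficients, and the ``more careful local analysis at $\ell$'' you defer is not a technical refinement but the entire content of the proof in this case. (There is also a secondary issue even for $n$ coprime to the level: the Eisenstein subspace of $M_2(\Gamma_0(N))$ for composite $N$ is spanned by $E_2(q)-dE_2(q^d)$, and a nonzero element of it can have identically vanishing coefficients at all $n$ coprime to $N$, e.g.\ $3E_2(q^3)-2E_2(q^2)$. To rule this out you must pin down the leading Eisenstein coefficient of $\theta_{\mathcal{O}}-\theta_M$, say via the lattice-point count $\sum_{n\leq X}(r_{\mathcal{O}}(n)-r_M(n))\sim c(1-\tfrac1\ell)X^2$; this is fixable but is not in your sketch.)

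What is actually needed to close case (ii) is a statement of the form: for $s\in S$ large, the elements of $\mathcal{O}$ of norm $s$ hit a nonzero class of $\mathcal{O}/M\cong\ZZ/\ell$, including when $\ell^k\| s$ for large $k$. This requires (a) verifying that the elements of norm $s$ in $\mathcal{O}\otimes\ZZ_\ell$ do not all lie in a given index-$\ell$ sublattice --- a nontrivial local computation, which is where the coprimality to $p$ and the maximality of $\mathcal{O}$ enter (the paper's \cref{sec:eichler} shows the analogous statement \emph{fails} for Eichler orders of index divisible by $2$ or $3$, so no purely global growth argument can succeed) --- and (b) a quantitative local-to-global step producing a global element of norm $s$ in a prescribed nonzero class mod $\ell$. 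The paper does exactly this: it proves a local-global principle (\cref{thm:localglobal}) for the property ``generated by elements of norm in $S$'' using Sardari's strong approximation theorem with congruence conditions and the strong local solubility condition of Browning--Dietmann, and then exhibits explicit local bases of norm $s$ for $\mathcal{O}\otimes\ZZ_\ell$ at every $\ell$ (\cref{sec:maxorder_localgens}). Your theta-series framing can in principle be pushed through, but only by importing this local density analysis at $\ell$, at which point it converges to the paper's argument rather than replacing it.
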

	\begin{rmk}
		See \cref{rmk:coprimetop} for a discussion of the coprime to~$p$ condition. 
		The theorem still holds if we take~$\mathcal{O}$ to be an Eichler order in~$B_p$ with index coprime to~$6$, but is false for every other Eichler order in~$B_p$; see \cref{sec:eichler}. 
	\end{rmk}

	To prove \cref{thm:genbylpower}, we first establish a local-global principle for lattices having the property of being generated as a~$\ZZ$-module by elements of norm in a given set~$S$ (\cref{thm:localglobal}). In \cref{sec:maxorder_localgens}, we check that under the conditions of \cref{thm:genbylpower}, all the local conditions of \cref{thm:localglobal} are satisfied.

	As a special case, we can take $S=\{\ell^k:k\geq 0\}$ for any prime $\ell\neq p$, and conclude that maximal orders are generated by elements of norm equal to a power of~$\ell$.  This has implications for the study of isogeny graphs of supersingular elliptic curves. If~$E$ is a supersingular elliptic curve over~$\overline{\FF_p}$, then~$\End(E)$ is a maximal order in~$B_p$, and endomorphisms of~$E$ with norm a power of~$\ell$ can be generated by finding cycles from~$E$ in the~$\ell$-isogeny graph of supersingular curves over~$\overline{\FF_p}$ (see for instance \cite{kohel,eisentrageretal,banketal}). \cref{thm:genbylpower} implies that such endomorphisms generate the entire endomorphism ring as an abelian group.  This is used as a heuristic assumption in~\cite[Section 3.3]{eisentrageretal}.  This question is considered by \cite{banketal}, who determine when two cycles generate a full rank sublattice of~$\End(E)$ (as an order), as well as a necessary condition for these cycles to generate~$\End(E)$, but show by example that these necessary conditions are not sufficient.
	
	The second result is that the isomorphism type of~$\mathcal{O}$ is determined by the number of elements of each norm. Given a lattice~$\Lambda$ with integral quadratic form~$Q$, we define the \emph{theta function} of~$\Lambda$,
	\[\theta_\Lambda(q)\coloneqq \sum_{x\in\Lambda} q^{Q(x)},\]
	so that the coefficient of~$q^n$ is the number of elements of norm~$n$. This function encodes the spectrum of the Laplace operator of the Riemannian manifold $\Lambda\otimes\RR/\Lambda$ (see \cite{nilssonrowlettrydell} for more on this analytic interpretation). When~$\mathcal{O}$ is a maximal order in~$B_p$,~$\theta_{\mathcal{O}}(q)$ is a modular form of weight~$2$ and level~$\Gamma_0(p)$.
	
	We say two lattices are \emph{isospectral} if their theta functions are equal. Lattices of rank~$n\leq 3$ are uniquely determined up to isometry by their theta function \cite{schiemann}, but in rank~$4$ and above there exist pairs of non-isometric isospectral lattices. Even if we restrict to the set of lattices in the genus of a fixed maximal order of~$B_p$, there may exist pairs of non-isometric integral lattices in~$B_p$ whose left- and right-orders are maximal and yet have the same theta function; see \cref{sec:isospectral} for examples. However, we prove that this does not occur if we restrict to maximal orders in~$B_p$.
	\begin{thm}\label{thm:theta_to_iso}
		If~$\mathcal{O},\mathcal{O}'$ are maximal orders in~$B_p$ with $\theta_\mathcal{O}=\theta_{\mathcal{O}'}$, then $\mathcal{O}\simeq \mathcal{O}'$.
	\end{thm}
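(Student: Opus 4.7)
My plan is to reduce \cref{thm:theta_to_iso} to a question about rank-$3$ lattices and then apply Schiemann's theorem (cited in the introduction), which asserts that a positive-definite integral ternary lattice is determined up to isometry by its theta function.

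To $\mathcal{O}$, I would associate its \emph{trace-zero sublattice} $\mathcal{O}^{0} := \{\alpha \in \mathcal{O} : \Tr(\alpha) = 0\}$, a positive-definite integral $\ZZ$-lattice of rank $3$ equipped with the restriction of the norm form. By the classical correspondence between quaternion orders and ternary quadratic forms (see, e.g., \cite[Chapter 22]{voight}), the isomorphism class of the maximal order $\mathcal{O}$ in $B_p$ is recovered from the isometry class of $\mathcal{O}^{0}$. Combined with Schiemann's theorem, the problem reduces to showing that $\theta_\mathcal{O}$ determines $\theta_{\mathcal{O}^{0}}$ as a formal power series.

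To extract $\theta_{\mathcal{O}^{0}}$ from $\theta_\mathcal{O}$, consider the sublattice $L := \ZZ \cdot 1 \oplus \mathcal{O}^{0} \subseteq \mathcal{O}$. Since every element of $L$ has even reduced trace, $[\mathcal{O}:L] \in \{1,2\}$ according to whether $\mathcal{O}$ contains an element of odd trace. Using the identity $\nrd(s+\beta) = s^{2}+\nrd(\beta)$ for $s\in\ZZ$ and $\beta\in\mathcal{O}^{0}$ (which follows from $\bar s = s$, $\bar\beta = -\beta$, and $\beta^{2} = -\nrd(\beta)$), one checks that $\theta_{L}(q) = \vartheta_{3}(q)\,\theta_{\mathcal{O}^{0}}(q)$, where $\vartheta_{3}(q) = \sum_{n\in\ZZ}q^{n^{2}}$. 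In the index-$1$ case this equals $\theta_\mathcal{O}$, so $\theta_{\mathcal{O}^{0}}$ is immediately recovered as the formal power-series quotient $\theta_\mathcal{O}/\vartheta_{3}$. In the index-$2$ case, writing $\mathcal{O}=L\sqcup(v_{0}+L)$ for some $v_{0}\in\mathcal{O}$ with $\Tr(v_{0})=1$ and setting $w_{0}:=v_{0}-\tfrac{1}{2}\in\tfrac{1}{2}\mathcal{O}^{0}$, a parallel computation gives
\[
\theta_\mathcal{O}(q) = \vartheta_{3}(q)\,\theta_{\mathcal{O}^{0}}(q) + \vartheta_{2}(q)\,\theta_{w_{0}+\mathcal{O}^{0}}(q),
\]
where $\vartheta_{2}(q) = \sum_{n\in\ZZ}q^{(n+1/2)^{2}}$ and $\theta_{w_{0}+\mathcal{O}^{0}}(q) = \sum_{\beta\in\mathcal{O}^{0}}q^{\nrd(w_{0}+\beta)}$.

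The principal obstacle is the index-$2$ case, in which $\theta_\mathcal{O}$ mixes $\theta_{\mathcal{O}^{0}}$ with the shifted theta series $\theta_{w_{0}+\mathcal{O}^{0}}$. To resolve it, I would first show that $[\mathcal{O}:L]$ is itself detectable from $\theta_\mathcal{O}$ (for instance by checking whether $\theta_\mathcal{O}/\vartheta_{3}$ is a rank-$3$ theta series, or by analyzing its placement within $M_{2}(\Gamma_{0}(p))$). Second, I would exploit the fact that $2w_{0} = 2v_{0}-1 \in \mathcal{O}^{0}$ to obtain an auxiliary relation between $\theta_{w_{0}+\mathcal{O}^{0}}$ and $\theta_{\mathcal{O}^{0}}$; a natural candidate is the overlattice $\mathcal{O}^{0}+\ZZ w_{0}$, whose theta series is $\theta_{\mathcal{O}^{0}}+\theta_{w_{0}+\mathcal{O}^{0}}$ and which is itself a canonical ternary lattice attached to $\mathcal{O}$ (realized as $\{\alpha - \Tr(\alpha)/2 : \alpha\in\mathcal{O}\}$). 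Combining these yields enough constraints to isolate $\theta_{\mathcal{O}^{0}}$, after which Schiemann's theorem gives isometry of the trace-zero sublattices and the ternary-form/quaternion-order correspondence upgrades this to the desired isomorphism $\mathcal{O}\simeq\mathcal{O}'$.
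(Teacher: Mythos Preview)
Your overall shape---pass to a rank-$3$ lattice, invoke Schiemann, and recover the order from the ternary form---is reasonable, and your decomposition
\[
\theta_{\mathcal{O}}(q)=\vartheta_3(q)\,\theta_{\mathcal{O}^0}(q)+\vartheta_2(q)\,\theta_{w_0+\mathcal{O}^0}(q)
\]
is essentially the paper's \cref{lem:thetadecomp} (rewritten in terms of $\mathcal{O}^0$ rather than $\mathcal{O}^T$). But there is a genuine gap at the step where you claim to ``isolate $\theta_{\mathcal{O}^0}$.'' First, a small point: for a maximal order the index $[\mathcal{O}:\ZZ\oplus\mathcal{O}^0]$ is \emph{always} $2$ (since $\det\mathcal{O}=p^2/16$ forces the Gram matrix to have a half-integer entry, hence $\mathcal{O}$ contains an odd-trace element), so your easy index-$1$ case never occurs. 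More seriously, the splitting of $\theta_{\mathcal{O}}$ as $\vartheta_3 f+\vartheta_2 g$ is \emph{not} unique, and your proposed auxiliary relation does not supply a second equation: you observe that $\theta_{\mathcal{O}^0}+\theta_{w_0+\mathcal{O}^0}$ equals the theta series of the overlattice $\mathcal{O}^0+\ZZ w_0=\tfrac12\mathcal{O}^T$, but that series is itself unknown, so you still have one equation in two unknowns. The paper makes this obstruction explicit in \cref{ex:tau_nonisom}, exhibiting two isometric rank-$4$ lattices $L_1,L_2\subset B_3$ (hence $\theta_{L_1}=\theta_{L_2}$) whose images $\tau(L_1),\tau(L_2)$ have different successive minima and different theta functions. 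Thus no purely formal power-series argument can extract $\theta_{\mathcal{O}^0}$ (or $\theta_{\mathcal{O}^T}$) from $\theta_{\mathcal{O}}$.

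What the paper does instead is abandon the attempt to recover the full ternary theta series and extract only the three successive minima $D_1\le D_2\le D_3$ of $\mathcal{O}^T$. This uses arithmetic input specific to maximal orders in $B_p$: Kaneko's construction (\cref{prop:kaneko}) forces $N(\beta_1)N(\beta_2)-\tfrac14\Tr(\beta_1\bar\beta_2)^2\in 4p\ZZ$ for independent $\beta_1,\beta_2\in\mathcal{O}^T$, which both bounds how many short vectors $\mathcal{O}^T$ can have (\cref{lem:distinct_embeddings}) and pins down the angles between them (\cref{lem:uniquetr}). These constraints are strong enough that the first one or two nonzero coefficients of $\theta_{\mathcal{O}}-\theta_L$ (for $L=\ZZ$, then $\ZZ[\alpha_1]$, then $\langle 1,\alpha_1,\alpha_2\rangle$) determine each $D_i$ in turn. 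Finally, \cref{thm:succmin_to_iso} shows that $(D_1,D_2,D_3)$ alone already determines the isometry class of $\mathcal{O}^T$ (and hence $\mathcal{O}$), so Schiemann is never actually invoked. The missing ingredient in your proposal is precisely this arithmetic rigidity coming from the single ramified prime~$p$; without it, the separation you need is impossible.
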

	
	The proof of \cref{thm:theta_to_iso} is divided into two steps; the first may be of independent interest so we state it here as a separate theorem. Given an order~$\mathcal{O}$ in~$B_p$, we define its \emph{Gross lattice} 
	\[\mathcal{O}^T\coloneqq \{2x-\Tr(x):x\in\mathcal{O}\}.\]
	This is a strict subset of the set of trace~$0$ elements in~$\mathcal{O}$; see \cref{sec:gross_lattice} for further details and discussion of the Gross lattice. For~$i=1,2,3$, the~$i$-th \emph{successive minimum} of~$\mathcal{O}^T$ is the minimum value~$D_i$ such that the span of all elements~$\alpha\in\mathcal{O}^T$ with~$N(\alpha)\leq D_i$ has dimension at least~$i$ (\cref{def:succmin}). 	
	
	In \cite{chevyrev_galbraith}, Chevyrev and Galbraith determine conditions under which the successive minima of the Gross lattice of~$\mathcal{O}$ determine the isomorphism class of~$\mathcal{O}$. The following result is a strengthening of \cite[Theorem 1]{chevyrev_galbraith}, and uses many of the same methods.
	
	\begin{thm}\label{thm:succmin_to_iso}
		Let~$p$ be an odd prime. Suppose~$\mathcal{O}_1$,~$\mathcal{O}_2$ are orders of~$B_p$, each of index~$r$ in some (not necessarily the same) maximal order. Suppose~$\mathcal{O}_1^T$ and~$\mathcal{O}_2^T$ have the same successive minima $D_1\leq D_2\leq D_3$, and that  $D_1\geq 8r^2$.  Then $\mathcal{O}_1\simeq\mathcal{O}_2$.
	\end{thm}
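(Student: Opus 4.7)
The strategy closely follows \cite{chevyrev_galbraith}, extended to handle sublattices of index $r$ in a maximal order. For each $i \in \{1,2\}$, I would choose $\alpha_1^{(i)}, \alpha_2^{(i)}, \alpha_3^{(i)} \in \mathcal{O}_i^T$ attaining the three successive minima. In rank~$3$ one can arrange for these to form a $\ZZ$-basis of $\mathcal{O}_i^T$ (or of a sublattice of bounded index, with the bound controlled by $r$). After Minkowski reduction the off-diagonal entries $c_{jk}=B(\alpha_j^{(i)},\alpha_k^{(i)})$ of the resulting integral Gram matrix satisfy $|c_{jk}|\leq D_j/2$ for $j<k$, while the diagonal entries equal $D_j$, where $B$ is the bilinear form associated to the norm on $B_p$.

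The central algebraic input is the identity, for trace-zero elements $\alpha,\beta$ in a quaternion algebra, that $\gamma\coloneqq\alpha\beta+B(\alpha,\beta)$ is again trace zero with norm $N(\alpha)N(\beta)-B(\alpha,\beta)^2$. Applied to $(\alpha_1^{(i)},\alpha_2^{(i)})$, the commutator identity $2\gamma=\alpha_1\alpha_2-\alpha_2\alpha_1$ shows that a bounded rescaling of $\gamma$ lies in $\mathcal{O}_i^T$; since $\alpha_1^{(i)},\alpha_2^{(i)}$ do not commute, $\gamma$ is linearly independent from them, so by the definition of the third successive minimum this rescaling has norm at least $D_3$, yielding a lower bound of the shape $D_1D_2-c_{12}^2\geq D_3/C(r)$ for an explicit constant $C(r)$. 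Analogous relations, applied to the other pairs among the $\alpha_j^{(i)}$ as well as to their integer combinations, produce a system of two-sided inequalities pinching each $c_{jk}$. The hypothesis $D_1\geq 8r^2$ should then ensure that the feasible region for each $c_{jk}$ contains exactly one integer, so the Gram matrix of a Minkowski-reduced basis of $\mathcal{O}_i^T$ depends only on $(D_1,D_2,D_3,r)$, whence $\mathcal{O}_1^T\cong\mathcal{O}_2^T$ as quadratic lattices.

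The main obstacle I expect is this uniqueness step: tracking precisely how the index $r$ enters (in the sublattice index, in the scaling from the commutator to the Gross lattice, and in divisibility constraints on the $c_{jk}$) and verifying that $8r^2$ is the correct calibration so that the two-sided pinch admits only one integer solution. Once the isometry of Gross lattices is in hand, the standard identification of the isometry group of the trace-zero quadratic form on $B_p$ with $\Aut(B_p)$ extended by the canonical involution—using that $\QQ$-algebra automorphisms of $B_p$ are inner by Skolem--Noether—lifts the isometry to an algebra automorphism of $B_p$. That automorphism sends $\mathcal{O}_1$ to $\mathcal{O}_2$ because each $\mathcal{O}_i$ is reconstructed from $\mathcal{O}_i^T$ together with $\ZZ$ inside its (fixed-index) ambient maximal order.
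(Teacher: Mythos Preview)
Your proposal has a genuine gap at the ``pinching'' step. The commutator construction you describe is correct and is in fact equivalent to Kaneko's construction (Proposition~\ref{prop:kaneko} in the paper): writing $2\gamma=\tau(\beta_1\beta_2)\in\mathcal{O}^T$, one has $N(2\gamma)=4(D_1D_2-c_{12}^2)$ and $2\gamma\perp\beta_1,\beta_2$. But the information you extract from it---the inequality $D_1D_2-c_{12}^2\geq D_3/C(r)$---is far too weak to determine $c_{12}$. Minkowski reduction gives $|c_{12}|\leq D_1/2$, and $D_1$ can be as large as $2(rp)^{2/3}$, so there are many integers in the feasible interval; no system of such inequalities, even using integer combinations of the $\beta_j$, will collapse this interval to a single point.

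What the paper actually uses is the \emph{divisibility} consequence of the same construction: since $\langle\beta_1,\beta_2,2\gamma\rangle$ has Gram determinant $4(D_1D_2-c_{12}^2)^2$ and sits inside $\mathcal{O}^T$ of determinant $4r^2p^2$, one gets $p\mid D_1D_2-c_{12}^2$, i.e.\ $c_{12}^2\equiv D_1D_2\pmod p$. The hypothesis $D_1\geq 8r^2$ is used not to shrink an inequality window but to guarantee $D_1\leq D_2\leq\sqrt{8r^2p^2/D_1}\leq p$; then $|c_{ij}|\leq D_i/2\leq p/2$, and a square root mod $p$ in $[0,p/2]$ is unique, so $|c_{ij}|$ is determined. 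A residual sign ambiguity in the Gram matrix (choosing $\pm c_{13}$ once $c_{12},c_{23}$ are fixed positive) is resolved by observing that the two candidate determinants differ by $4c_{12}c_{13}c_{23}$, which is nonzero mod $p$, while the true determinant must be divisible by $p^2$. Your final lifting step also needs a small correction: only orientation-preserving isometries of $B_p^0$ arise from conjugation, so one must first observe that negating all three $\beta_i$ reverses orientation while preserving the Gram matrix.
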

	
	In particular, for all primes~$p$, a maximal order in~$B_p$ is determined up to isomorphism by the successive minima of its Gross lattice:  this is vacuously true if $p=2$ or $D_1<8$ because this information determines a unique maximal order in~$B_p$ (see \cref{lem:smallcases}), and it holds for $D_1\geq 8$ by \cref{thm:succmin_to_iso}. 
	
	After setting up some preliminary results on the geometry of quaternion orders in \cref{sec:quat_background}, we prove \cref{thm:succmin_to_iso} in \cref{sec:mins_to_order}. The remainder of \cref{sec:thetaproof} is used to show that the theta function of~$\mathcal{O}$ determines the successive minima of~$\mathcal{O}^T$, allowing us to conclude \cref{thm:theta_to_iso}.

	In future work we will explore similar questions for quaternion algebras over totally real fields.

	\subsection{Acknowledgements}
	
	We thank Eran Assaf, Naser Sardari, and John Voight for useful discussion.
	
	\subsection{Lattice definitions and conventions}\label{sec:notation}
	Let $R=\ZZ$ or $R=\ZZ_\ell$ for some prime~$\ell$, and~$K$ the fraction field of~$R$.  A \emph{lattice}~$\Lambda$ is a free finite-rank~$R$-module (so that $\Lambda\simeq R^n$ for some positive integer~$n$) equipped with a non-degenerate quadratic form $Q\colon \Lambda\to K$. \black A quadratic form $Q$ is \emph{integral} if it takes values in~$R$. For $\mathbf{x}\in \Lambda$, we will refer to the value~$Q(\mathbf{x})$ as the \emph{norm} of~$\mathbf{x}$. Any quadratic form defines a bilinear form $x\cdot y\coloneqq \frac12(Q(x+y)-Q(x)-Q(y))$; if~$Q$ is integral than the bilinear form is valued in~$\frac12 R$. Given a basis $\mathbf{v}_1,\ldots,\mathbf{v}_n$ for~$\Lambda$,  the \emph{Gram matrix for~$\Lambda$} (we will also say ``the Gram matrix of~$Q$'') \black is the symmetric matrix $\mathbf{A}_{\Lambda}\in \frac12 M_n(R)$ defined by 
	\[\mathbf{A}_{\Lambda}\coloneqq (v_i\cdot v_j)_{1\leq i,j\leq n}.\]
	If we write $\mathbf{x}\in\Lambda$ as a vector in terms of the basis $\mathbf{v}_1,\ldots,\mathbf{v}_n$, then the Gram matrix satisfies the relation 
	\[Q(\mathbf{x})=\mathbf{x}^T\mathbf{A}_{\Lambda}\mathbf{x}.\]
	If there is no room for confusion, the subscript of~$\mathbf{A}_\Lambda$ may be dropped. The \emph{determinant} of~$\Lambda$, $\det\Lambda$, is defined to be the determinant of a Gram matrix for~$\Lambda$. When $R=\ZZ$, we have $\det\Lambda>0$ and the value does not depend on the choice of basis.
	
	Given $a_1,\ldots,a_k\in \Lambda$, we use the notation $\langle a_1,\ldots,a_k\rangle$ to denote the sublattice of~$\Lambda$ generated by $a_1,\ldots,a_k$ as an $R$-module. We say that a subset $C\subseteq \Lambda$ is a \emph{generating set} for~$\Lambda$ if~$C$ generates~$\Lambda$ as an~$R$-module.

	\section{Generating sets for maximal orders}
	
	\subsection{A local-global principle for being generated by elements of prescribed norms}
	
	Let $Q\colon \ZZ^n\to\ZZ$ be a quadratic form with Gram matrix $\mathbf{A}\in \frac12 M_n(\ZZ)$.  For a prime~$\ell$, set 
	\[\tau_\ell=\left\{\begin{array}{ll}
		1,&\ell>2,\\
		3,&\ell=2.
	\end{array}\right.\]
	Aside from the last line, \black the following definition appears in Browning and Dietmann \cite{browning_dietmann}.
	\begin{defn}
		For $s\in\ZZ_{>0}$ and $\mathbf{A}\in M_n(\ZZ)$, the pair $(s,Q)$ satisfies the \emph{strong local solubility condition} (``strong~LSC'') if for every prime~$\ell$ there exists $\mathbf{x}\in (\ZZ/\ell^{\tau_\ell}\ZZ)^n$ with $Q(\mathbf{x})\equiv s\pmod {\ell^{\tau_\ell}}$ and $\ell\nmid \mathbf{A}\mathbf{x}$. 
		
		If $\mathbf{A}\in \frac12M_n(\ZZ)\setminus M_n(\ZZ)$, then we say $(s,Q)$ satisfies strong~LSC if $(2s,2Q)$ does.
	\end{defn}
	
	Note that the strong~LSC condition does not depend on the basis for $\ZZ^n$ used to define the Gram matrix $\mathbf{A}$. \black
	
	If for every prime~$\ell$ there exists $\mathbf{x}\in\ZZ_\ell^n$ with $Q(\mathbf{x})=s$, we say $(s,Q)$ satisfies the \emph{weak local solubility condition} (``weak~LSC''). Strong~LSC implies weak~LSC by Hensel-lifting, but the converse does not hold (\cref{ex:stronglsc}).
	
	The following theorem is a local-global principle for lattices with the property of being generated by elements with norm in~$S$. 
	
	\begin{thm}\label{thm:localglobal}
		Let $n\geq 4$,~$Q$ a non-degenerate integral quadratic form on~$\ZZ^n$, and $S\subseteq\ZZ_{>0}$. Suppose that for all $M\geq 0$ and all primes~$\ell$ there exists a generating set~$C_\ell$ for~$\ZZ_\ell^n$ such that for all $\mathbf{x}\in C_\ell$, the norm $s\coloneqq Q(\mathbf{x})$ satisfies
		\begin{enumerate}[label=(\alph*)]
			\item $s\in S$, 
			\item $s\geq M$, 
			\item $(s,Q)$ satisfies strong~LSC.
		\end{enumerate} 
		Then~$\ZZ^n$ has a generating set consisting of elements with norm in~$S$.
	\end{thm}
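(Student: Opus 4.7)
The argument proceeds by an iterative local-to-global construction. The central technical input is a strong approximation principle derivable from the circle-method techniques of \cite{browning_dietmann}: for $n\geq 4$, given a prime $\ell$, an integer $N\geq 1$, and $\mathbf{y}\in\ZZ_\ell^n$ with $Q(\mathbf{y})=s$ and $\ell\nmid\mathbf{A}\mathbf{y}$, if $(s,Q)$ satisfies strong LSC and $s$ is sufficiently large (depending on $Q$, $\ell$, and $N$), then there exists $\mathbf{x}\in\ZZ^n$ with $Q(\mathbf{x})=s$ and $\mathbf{x}\equiv\mathbf{y}\pmod{\ell^N}$. The non-singularity condition $\ell\nmid\mathbf{A}\mathbf{y}$ is precisely what permits this congruence refinement, via Hensel lifting of smooth local points.

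Granted this tool, I first build a full-rank sublattice. Fix any prime $\ell_0$ and apply the hypothesis with $M$ chosen large enough to trigger the approximation principle, obtaining a generating set $C_{\ell_0}$ of $\ZZ_{\ell_0}^n$ with norms in $S$ satisfying strong LSC. By Nakayama's lemma some $n$ elements $\mathbf{y}_1,\ldots,\mathbf{y}_n\in C_{\ell_0}$ reduce to a basis of $\FF_{\ell_0}^n$. Approximating each $\mathbf{y}_i$ globally (taking $N=1$) yields $\mathbf{x}_1,\ldots,\mathbf{x}_n\in\ZZ^n$ with norms in $S$, linearly independent mod $\ell_0$, hence spanning a finite-index sublattice $L_0\subseteq\ZZ^n$.

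The construction then iterates. Suppose $\mathbf{x}_1,\ldots,\mathbf{x}_{n+m}\in\ZZ^n$ have norms in $S$ and span a finite-index sublattice $L_m\subsetneq \ZZ^n$. Pick any prime $\ell\mid[\ZZ^n:L_m]$, so $(L_m)_\ell\subsetneq \ZZ_\ell^n$. Applying the hypothesis again, the generating set $C_\ell$ must contain some $\mathbf{y}_\ell\notin(L_m)_\ell$. Choose $N$ large enough that $\mathbf{y}_\ell+\ell^N\ZZ_\ell^n$ is disjoint from the closed subset $(L_m)_\ell$, and $M$ large enough to invoke the approximation principle. The principle provides $\mathbf{x}_{n+m+1}\in\ZZ^n$ with norm in $S$ lying outside $(L_m)_\ell$, whence $L_{m+1}:=L_m+\ZZ\mathbf{x}_{n+m+1}$ satisfies $[\ZZ^n:L_{m+1}]<[\ZZ^n:L_m]$ — strictly, since the index drops at $\ell$ and can only weakly drop elsewhere. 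Because the index is a strictly decreasing positive integer, the process terminates at some $L_m=\ZZ^n$, and $\{\mathbf{x}_1,\ldots,\mathbf{x}_{n+m}\}$ is the desired generating set.

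The main obstacle is establishing the strong approximation principle with the prescribed residue class. Mere existence of global solutions under strong LSC for $n\geq 4$ is the core of \cite{browning_dietmann}; the congruence refinement modulo $\ell^N$ should follow either by isolating the local factor at $\ell$ in the singular series of the circle method, or by combining the existence statement with strong approximation on the smooth fiber $Q^{-1}(s)$ via the transitive action of $\SO(Q)$. Once this is in hand, the remainder of the proof is a routine decreasing-index argument.
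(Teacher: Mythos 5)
Your global bookkeeping is genuinely different from ours, and it is sound: you build a full-rank sublattice first and then run a decreasing-index induction, approximating a local generator modulo $\ell^N$ with $N$ as large as the $\ell$-part of the current index. We instead approximate each element of $C_\ell$ only modulo $\ell$ (first power) and observe that a sublattice of $\ZZ^n$ surjecting onto $\ZZ^n/\ell\ZZ^n$ for every prime $\ell$ must be all of $\ZZ^n$; this avoids ever needing higher-power congruences. Either scheme would finish the proof once the approximation step is in place.

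The gap is that the entire difficulty of the theorem sits inside the ``strong approximation principle'' you assert but do not establish, and of your two proposed justifications one fails in the case that matters. For $Q$ positive definite (the case of our application, and permitted by the hypotheses) the real points of $\mathrm{Spin}(Q)$ are compact, so strong approximation on the fiber $Q^{-1}(s)$ via the action of $\SO(Q)$ is false as a soft statement; this is precisely why a quantitative input with $s\to\infty$ is unavoidable. Your first route is the correct one and is what we carry out: we quote Sardari's count (\cref{thm:sardari}) of solutions to $Q(\mathbf{x})=s$ in a prescribed residue class, with main term $\mathfrak{S}(s)V^{n-1}s^{(n-2)/2}$, and use \cite[Proposition 2]{browning_dietmann} to show that strong~LSC forces $\mathfrak{S}(s)\gg_{\delta}|s|^{-\delta}$ (\cref{lem:singseries}), so the main term dominates once $s\geq M$. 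A second, concrete mismatch: your principle requires the local point being approximated to satisfy $\ell\nmid\mathbf{A}\mathbf{y}$, but nothing guarantees that the elements of $C_\ell$ --- in particular the $\mathbf{y}_\ell\notin(L_m)_\ell$ your induction must use --- are non-singular modulo $\ell$; strong~LSC only supplies the existence of \emph{some} non-singular solution modulo $\ell^{\tau_\ell}$, possibly unrelated to $\mathbf{y}_\ell$. You need either to prove the approximation statement for an arbitrary $\ZZ_\ell$-point $\mathbf{y}$ with $Q(\mathbf{y})=s$ (the local density at $\ell$ is still positive by Hensel's lemma since $\mathbf{A}\mathbf{y}\neq 0$, with non-singularity at the remaining primes supplied by strong~LSC, which is how our argument is arranged), or to rearrange the induction so that $\ell\nmid\mathbf{A}\mathbf{y}$ is only invoked where the hypotheses actually provide it.
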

	
	We prove this in \cref{sec:localglobalproof}. Before that we draw a corollary, and then discuss the necessity of the conditions in the theorem.
	
	A quadratic form $Q\colon \ZZ^n\to \ZZ$ is \emph{primitive} if $\gcd(\{Q(\mathbf{x}):\mathbf{x}\in\ZZ^n\})=1$.
	
	\begin{cor}\label{cor:basislocalglobal}
		Let $n\geq 4$,~$Q$ a non-degenerate primitive integral quadratic form on~$\ZZ^n$, and $S\subseteq\ZZ_{>0}$ an infinite set. Suppose that for all $s\in S$ and that for all primes~$\ell$, there exists a basis for~$\ZZ_\ell^n$ consisting of elements of norm~$s$. Then~$\ZZ^n$ has a generating set consisting of elements with norm in~$S$.
	\end{cor}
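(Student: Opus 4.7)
The plan is to deduce \cref{cor:basislocalglobal} from \cref{thm:localglobal} by verifying the three hypotheses of that theorem. Fix $M \geq 0$ and a prime $\ell$. Since $S \subseteq \ZZ_{>0}$ is infinite, it is unbounded, so I can select some $s \in S$ with $s \geq M$; this takes care of hypothesis (b). The assumption in the corollary then furnishes a basis $\mathbf{x}_1, \ldots, \mathbf{x}_n$ of $\ZZ_\ell^n$ whose elements all have norm exactly $s$, and I take $C_\ell = \{\mathbf{x}_1, \ldots, \mathbf{x}_n\}$. Hypotheses (a) and (b) of \cref{thm:localglobal} are then immediate for this $C_\ell$.

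The real content is to verify hypothesis (c): that $(s, Q)$ satisfies strong LSC. This is a condition imposed at every prime $\ell'$ (not just the distinguished $\ell$ used to build $C_\ell$). For a given prime $\ell'$, I apply the corollary's hypothesis again, this time at $\ell'$, to extract a basis $\mathbf{y}_1, \ldots, \mathbf{y}_n$ of $\ZZ_{\ell'}^n$ with $Q(\mathbf{y}_i) = s$. The norm congruence demanded by strong LSC is automatic since $Q(\mathbf{y}_i) = s$ exactly, so the whole question reduces to finding some $i$ for which $\ell' \nmid \mathbf{A} \mathbf{y}_i$ (or $\ell' \nmid 2\mathbf{A}\mathbf{y}_i$ in the adjusted formulation used when $\mathbf{A} \notin M_n(\ZZ)$).

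This non-vanishing is the main point of the proof. I would argue by contradiction: if $\ell' \mid \mathbf{A} \mathbf{y}_i$ for every $i$, then since the matrix $X$ with columns $\mathbf{y}_i$ is invertible over $\ZZ_{\ell'}$, the relation $\mathbf{A} X \equiv 0 \pmod{\ell'}$ forces $\mathbf{A} \equiv 0 \pmod{\ell'}$ in $M_n(\ZZ_{\ell'})$. When $\mathbf{A} \in M_n(\ZZ)$, this means $\ell'$ divides every entry of $\mathbf{A}$, so $Q(\mathbf{x}) = \mathbf{x}^T \mathbf{A} \mathbf{x}$ is divisible by $\ell'$ for all $\mathbf{x}$, contradicting primitivity of $Q$. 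The subtlety I expect to spend a little more care on is the case $\mathbf{A} \in \frac{1}{2} M_n(\ZZ) \setminus M_n(\ZZ)$, where the LSC definition is invoked for $(2s, 2Q)$. For odd $\ell'$ the same argument goes through after dividing by $2$, which is a unit. For $\ell' = 2$, primitivity is not what I want; instead, $2\mathbf{A} \equiv 0 \pmod 2$ in $M_n(\ZZ_2)$ would force $\mathbf{A} \in M_n(\ZZ_2)$, contradicting the fact that $\mathbf{A}$ has a half-integer entry (hence an entry not lying in $\ZZ_2$). With all three hypotheses of \cref{thm:localglobal} verified, the theorem produces a generating set for $\ZZ^n$ whose elements have norm in $S$.
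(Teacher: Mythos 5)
Your proposal is correct and follows essentially the same route as the paper: reduce to verifying strong~LSC, then argue by contradiction that if $\ell' \mid \mathbf{A}\mathbf{y}_i$ for every basis vector then $\mathbf{A} \equiv 0 \pmod{\ell'}$, contradicting primitivity (or, at $\ell'=2$ in the half-integral case, contradicting $\mathbf{A} \notin M_n(\ZZ_2)$ via the invertibility of the basis matrix). The paper's proof handles the same case split in the same way, so there is nothing to add.
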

	\begin{proof}
		Since~$s$ can be arbitrarily large, we just need to check that $(s,Q)$ satisfies strong~LSC. We have a basis consisting of elements $\mathbf{x}\in\ZZ_\ell^n$ with $Q(\mathbf{x})=s$, so it suffices to show that one such basis vector has $\ell\nmid \mathbf{A}\mathbf{x}$ (or $\ell\nmid (2\mathbf{A})\mathbf{x}$ when $\mathbf{A}\notin M_n(\ZZ)$). 
		
		Let~$\ell$ be an odd prime, and for the sake of contradiction, suppose $\ell\mid \mathbf{A}\mathbf{x}$ for all~$\mathbf{x}$ in a basis for~$\ZZ_\ell^n$. Then $\ell\mid \mathbf{A}\mathbf{x}$ for all $\mathbf{x}\in\ZZ_\ell^n$, so \[\ell\mid \mathbf{x}^t \mathbf{A}\mathbf{x}=Q(\mathbf{x}),\]
		contradicting the assumption that~$Q$ is primitive. Thus $(s,Q)$ must satisfy strong~LSC at~$\ell$.
		
		Now suppose $\ell=2$. If $\mathbf{A}\in M_n(\ZZ)$, the same argument as above applies. Now suppose $\mathbf{A}\in \frac12 M_n(\ZZ)\setminus M_n(\ZZ)$, and for the sake of contradiction suppose $2\mid (2\mathbf{A})\mathbf{x}$ for all~$\mathbf{x}$ in a basis for~$\ZZ_2^n$. Letting $\mathbf{B}\in \GL_n(\ZZ_2)$ denote the matrix with columns corresponding to this basis, we have $(2\mathbf{A})\mathbf{B}\in 2M_n(\ZZ_2)$. This implies $\mathbf{A}\mathbf{B}\in M_n(\ZZ_2)$, so multiplying on the right by $\mathbf{B}^{-1}\in \GL_n(\ZZ_2)$, we have $\mathbf{A}\in M_n(\ZZ_2)$. This contradicts our initial assumption on~$\mathbf{A}$, so $(2s,2Q)$ --- and therefore also $(s,Q)$ --- satisfies strong~LSC at~$2$.
	\end{proof}

	We now discuss the technical conditions in the statement of \cref{thm:localglobal}, and demonstrate through example that they cannot be removed or substantially weakened.

	\begin{rmk}
		The conditions $n\geq 4$, (b), and (c) of \cref{thm:localglobal} will be familiar to experts in the study of representability of integers by quadratic forms. When $Q\colon\ZZ^n\to \ZZ$ is a quadratic form with $n\geq 4$, Browning and Dietmann find an explicit lower bound~$M$ such that whenever $k\geq M$ and $(k,Q)$ satisfies strong~LSC,~$k$ is representable by~$Q$ \cite[Theorem 5]{browning_dietmann}. In \cite[Section 1.2]{browning_dietmann}, they discuss examples due to Watson \cite[Section 7.7]{watson} demonstrating that a local-global principle can fail if $k<M$ or if $(k,Q)$ does not satisfy strong~LSC.
		
		In each of the counterexamples below, on the other hand, every value in~$S$ is globally represented by~$Q$. Even in this setting we show that if we drop any of the conditions $n\geq 4$, (b), or (c), the existence of generating sets for~$\ZZ_\ell^n$ with norms in~$S$ for all primes~$\ell$ is not sufficient to conclude the existence of a generating set for~$\ZZ^n$ with norms in~$S$.
	\end{rmk}

	\begin{ex}
		If we remove the condition $n\geq 4$ from \cref{thm:localglobal}, a counterexample is given by the quadratic form 
		\[Q(x,y)=x^2+21y^2\]
		on~$\ZZ^2$ and $S=\{19^{2k}:k\geq 0\}$. The elements in~$\ZZ^2$ with norm in~$S$ generate an index~$4$ sublattice of~$\ZZ^2$: using the observation that $5^2+21\cdot 4^2=19^2$, we can factor each side of the equation $x^2+21y^2=19^{2k}$ into prime ideals of $\ZZ[\sqrt{-21}]$ to show that we must have $4\mid y$. But for any $k\geq 0$ and any prime~$\ell$, there is a basis for~$\ZZ_\ell^2$ consisting of elements of norm~$19^{2k}$: we have
		\begin{align*}
			\begin{array}{lclll}
				Q(19^k,0) & \equiv & Q(19^k,1)&\equiv 19^{2k}\pmod \ell & \text{for } \ell=3,7,\\
				Q(1,0) &\equiv & Q(2,1)&\equiv 19^{2k}\pmod 8, &\text{and}\\
				Q(6,1)&\equiv & Q(6,-1)&\equiv 19^{2k}\pmod {19},
			\end{array}
		\end{align*}
		and for remaining~$\ell$, we can use the fact that $x^2+21y^2-19^kt^2=0$ defines a smooth projective conic over~$\FF_\ell$ to find two independent points $(x,y)\in\FF_\ell^2$ with $Q(x,y)\equiv 19^k\pmod \ell$. In each case these Hensel-lift to a basis for~$\ZZ_\ell^2$ of elements with norm~$19^k$.
		
		We do not currently know whether or not it is sufficient to assume $n\geq 3$ in \cref{thm:localglobal}.
	\end{ex}
	
	\begin{ex}
		Condition (b) of \cref{thm:localglobal} (that the local generators have norm at least~$M$) can be thought of as a constraint coming from the infinite place of~$\QQ$. If we remove it, a counterexample is given by the quadratic form
		\[Q(x,y,z,w)=x^2+9y^2+9z^2+9w^2\]
		on~$\ZZ^4$ with $S=\{37\}$. The only $x\in\ZZ$ satisfying $x^2\equiv 37\pmod 9$ and $x^2\leq 37$ is $x=\pm 1$,  so the set of vectors of norm~$37$ generate an index~$16$ sublattice of~$\ZZ^4$ with basis
		\[(1,2,0,0),(-1,2,0,0),(1,0,2,0),(1,0,0,2).\]
		
		However, for all primes~$\ell$, there is a basis for~$\ZZ_\ell^4$ consisting of elements of norm~$37$:  for odd~$\ell$ we can use the above basis because~$16$ is a unit in~$\ZZ_\ell^\times$,  and for $\ell=2$ we can let~$u$ be a square root of~$\frac{11}{3}$ in~$\ZZ_2^\times$ and use
		\[(1,2,0,0),(2,u,0,0),(2,0,u,0),(2,0,0,u).\]
	\end{ex}
	
	\begin{ex}\label{ex:stronglsc}
		If we weaken condition (c) to merely requiring that  $(s,Q)$ satisfies weak~LSC,  a counterexample is given by the quadratic form
		\[Q(x,y,z,w)=3x^2+5y^2+11\cdot 15^2z^2+11\cdot 15^3w^2\]
		on~$\ZZ^4$ and $S=\{3^k:k\geq 1\text{ odd}\}\cup\{5^k:k\geq 1\text{ odd}\}$. The elements $(3^{(k-1)/2},0,0,0)$ and $(0,5^{(k-1)/2},0,0)$ show that every element of~$S$ is globally represented (and hence everywhere locally represented). Further, for any odd $k\geq 1$ and $\ell\neq 5$,~$\ZZ_\ell^4$ has a basis of elements of norm~$5^k$, using the observations that
		\begin{align*}
			Q(1,1,0,1) \equiv  Q(0,1,1,1) \equiv  Q(0,1,0,0) \equiv  Q(0,0,0,1) &\equiv  5^k\pmod{8},\\
			Q(0,1,0,0) \equiv  Q(1,1,0,0) \equiv  Q(0,1,1,0) \equiv  Q(0,1,0,1) &\equiv  5^k\pmod{3},\\
			Q(3r,0,0,0) \equiv  Q(0,r,0,0) \equiv  Q(0,r,1,0) \equiv  Q(0,r,0,1) &\equiv  5^k\pmod{11}
		\end{align*}
		with $r=5^{(k-1)/2}$. In a similar way we can show that for any odd $k\geq 1$ and $\ell\neq 3$,~$\ZZ_\ell^4$ has a basis of elements of norm~$3^k$.
		
		However, for $k\geq 4$, the only elements of norm~$5^k$ in~$\ZZ_5^4$ are in~$5\ZZ_5^4$:
		\begin{align*}
			0\equiv Q(x,y,z,w)&\equiv 3x^2\pmod 5  & &\Rightarrow 5\mid x;\\
			0\equiv Q(5x_1,y,z,w)&\equiv 5y^2\pmod{25} & &\Rightarrow 5\mid y;\\
			0\equiv Q(5x_1,5y_1,z,w)&\equiv 25(3x_1^2+4z^2)\pmod{125} & &\Rightarrow 5\mid x_1,z;\\
			0\equiv Q(25x_2,5y_1,5z_1,w)&\equiv 125(y_1^2+2w^2)\pmod{625} & &\Rightarrow 5\mid y_1,w.
		\end{align*}
		Thus $(5^k,Q)$ does not satisfy strong~LSC at~$5$. In a similar way, for $k\geq 4$ the only elements of norm~$3^k$ in~$\ZZ_3^4$ are in~$3\ZZ_3^4$, so $(3^k,Q)$ does not satisfy strong~LSC at~$3$. In particular, every element of~$\ZZ^4$ with norm in~$S$ satisfies $15\mid z,w$ (using the argument above for $k\geq 4$ and checking explicitly for small~$k$), so such elements do not generate~$\ZZ^4$.
	\end{ex}

	\subsection{Proof of \cref{thm:localglobal}}\label{sec:localglobalproof}
	
	The proof is a direct application of a strong approximation theorem of Sardari. We quote a special case of this theorem here.
	
	Given an integer~$s$, a prime~$\ell$, an integer $t_\ell\geq 0$, and $\mathbf{a}_\ell\in \ZZ_\ell^n$, define the local density
	\[\sigma_\ell(\mathbf{a}_\ell,t_\ell,s)\coloneqq \lim_{k\to\infty}\frac{n(\ell^k)}{\ell^{(n-1)k}},\]
	where
	\[n(\ell^k)\coloneqq \#\{\mathbf{x}\in(\ZZ/\ell^{k+t_\ell}\ZZ)^n:Q(\mathbf{x})\equiv s\pmod {\ell^{k+t_\ell}},\quad\mathbf{x}\equiv \mathbf{a}_\ell\pmod{\ell^{t_\ell}}\}.\]
	Given a choice of~$t_\ell$ and~$\mathbf{a}_\ell$ for all~$\ell$ with $t_\ell=0$ for all but finitely many~$\ell$, set $\mathfrak{S}(s)\coloneqq \prod_{\ell}\sigma_\ell(\mathbf{a}_\ell,t_\ell,s)$ and $V=\prod_{\ell} \ell^{-t_\ell}$.
	\begin{thm}[{\cite[Theorem 1.6]{sardari}\footnote{The full theorem has a stronger bound when $n\geq 5$, and includes terms accounting for archimedean constraints.}}]\label{thm:sardari}
		Let $n\geq 4$, $Q\colon \ZZ^n\to \ZZ$ a non-degenerate quadratic form, and $\epsilon>0$. For all primes~$\ell$ and all integers~$s$, the number of $\mathbf{x}\in\ZZ^n$ satisfying $Q(\mathbf{x})=s$ and $\mathbf{x}\equiv \mathbf{a}_\ell\pmod{\ell^{t_\ell}}$ for all primes~$\ell$ is
		\[\gg \mathfrak{S}(s)V^{n-1}s^{\frac{n-2}{2}}\left(1+O(V^{-3(n-3)/2}s^{\epsilon-\frac{n-3}{4}})\right),\]
		where the implied constants in~$\gg$ and~$O$ depends only on~$\epsilon$ and~$Q$.		
	\end{thm}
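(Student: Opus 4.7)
The plan is to establish this asymptotic via the Hardy--Littlewood circle method, in the $\delta$-symbol formulation of Duke--Friedlander--Iwaniec and Heath-Brown, adapted to incorporate the congruence conditions $\mathbf{x}\equiv\mathbf{a}_\ell\pmod{\ell^{t_\ell}}$. First I would combine these conditions by the Chinese Remainder Theorem into a single condition $\mathbf{x}\equiv\mathbf{a}\pmod{N}$, where $N=\prod_\ell \ell^{t_\ell}=V^{-1}$, restricting the sum to the sublattice $\mathbf{a}+N\ZZ^n$ of index $N^n$. After introducing a smooth weight $w$ supported on $\{|\mathbf{x}|\asymp \sqrt{s}\}$ (chosen so that an unweighted lower bound can be recovered at the end), one writes
\[
N_w(s) \;=\; \int_{\RR}\sum_{\mathbf{x}\in\mathbf{a}+N\ZZ^n} w(\mathbf{x})\,h\bigl(\alpha,Q(\mathbf{x})-s\bigr)\,d\alpha
\]
and applies Poisson summation in $\mathbf{x}$ to dualize the sublattice constraint, turning the sum into exponential integrals against Gauss sums indexed by vectors in $N^{-1}\ZZ^n$.

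Next, partition the $\alpha$-range according to the denominator $q$ of a nearby rational and split into major arcs (small $q$) and minor arcs. On the major arcs, the Gauss sums factor into local contributions: summing over $q$ produces the singular series $\mathfrak S(s)$ (with the $\sigma_\ell(\mathbf{a}_\ell,t_\ell,s)$ arising from the restricted residue classes), while the $\alpha$-integral yields a singular integral that, by the scaling $\mathbf{x}\mapsto \sqrt{s}\,\mathbf{x}$, contributes the factor $s^{(n-2)/2}$. The sublattice index $N^n$ combines with a single factor of $N$ coming from the dimension of the level set $\{Q=s\}$ to leave precisely the predicted $V^{n-1}=N^{-(n-1)}$.

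On the minor arcs, for $n\geq 5$ one can finish with Weyl-type bounds on the exponential integrals; the delicate case is $n=4$, where the classical circle method just barely fails. Here I would invoke Kloosterman's refinement, after which the minor-arc contribution is controlled by sums of Kloosterman sums whose cancellation, via the Weil bound, delivers the stated saving $s^{\epsilon-(n-3)/4}$. The factor $V^{-3(n-3)/2}$ in the error term records how the sublattice restriction shortens the effective ranges of the dual variables and thereby weakens the Kloosterman cancellation; tracking it carefully is what makes the implied constants depend only on $\epsilon$ and $Q$.

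The main obstacle is precisely this minor-arc analysis at $n=4$: at the threshold dimension, extracting a genuine power saving requires the full strength of the Kloosterman refinement together with uniform Weil-type bounds for Kloosterman sums along arithmetic progressions whose moduli are tied to $N$. A secondary but nontrivial technical point is the bookkeeping that trades the $V$-dependence between the main and error terms while removing the smoothing by $w$, so that the unweighted count is bounded below by the main term as soon as the error term is genuinely smaller, which is exactly the content of the factor $1+O(V^{-3(n-3)/2}s^{\epsilon-(n-3)/4})$.
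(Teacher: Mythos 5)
You have set out to prove a statement that the paper does not prove: \cref{thm:sardari} is quoted verbatim (as a special case, per the footnote) from Sardari's paper on optimal strong approximation for quadratic forms, and is used purely as a black box in the proof of \cref{thm:localglobal}. There is therefore no internal proof to compare yours against, and a reader of this paper is expected to consult the cited source.

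Taken on its own terms, your outline does correctly identify the method of the cited proof: a delta-symbol version of the circle method in which the congruence conditions are merged by CRT into a single condition modulo $N=V^{-1}$, Poisson summation dualizes the sublattice, the major arcs produce the singular series $\mathfrak{S}(s)$ and a singular integral of size $s^{(n-2)/2}$, and the critical dimension $n=4$ is handled by the Kloosterman refinement with Weil-type bounds. However, everything from the minor-arc analysis onward is asserted rather than established. The genuinely hard content --- the uniform bounds on the relevant complete exponential sums and oscillatory integrals, the uniformity of every estimate in the modulus $N$ (which is exactly what yields the explicit powers $V^{n-1}$ in the main term and $V^{-3(n-3)/2}$ in the error term), the lower bound for the archimedean local density after de-smoothing, and the treatment of forms that are anisotropic at some prime --- is the substance of Sardari's paper and occupies many pages there. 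As written, your proposal is an accurate roadmap of that argument, not a proof; for the purposes of this paper the correct move is the one the authors make, namely to cite the result and verify only its hypotheses (which is done via \cref{lem:singseries} and the strong~LSC condition).
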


	\begin{lem}\label{lem:singseries}
		Let~$\ell$ be a prime,~$s$ an integer, and $\mathbf{a}_\ell\in\ZZ_\ell^n$ satisfying $Q(\mathbf{a}_\ell)=s$. Let $t_\ell=1$ and $t_{\ell'}=0$ for $\ell'\neq \ell$. Suppose $(s,Q)$ satisfies strong~LSC. Then for all $\delta>0$ we have
		$\mathfrak{S}(s)\gg |s|^{-\delta}$, where the implicit constant depends only on~$Q$,~$\ell$, and~$\delta$ (not on~$s$).
	\end{lem}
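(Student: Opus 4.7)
The plan is to factor the singular series as
\[\mathfrak{S}(s) = \sigma_\ell(\mathbf{a}_\ell, 1, s) \cdot \prod_{\ell' \neq \ell} \sigma_{\ell'}(s),\]
using that $t_{\ell'} = 0$ for $\ell' \neq \ell$ makes the local density at $\ell'$ independent of $\mathbf{a}_{\ell'}$ and equal to the standard local density $\sigma_{\ell'}(s)$ of $Q$ representing $s$. I would bound the distinguished $\ell$-factor and the tail product separately.

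For the $\ell$-factor, strong~LSC at $\ell$ supplies a non-singular witness to $Q \equiv s \pmod{\ell^{\tau_\ell}}$, which by Hensel's lemma lifts to a non-singular element of $\ZZ_\ell^n$ with norm $s$; combined with $Q(\mathbf{a}_\ell) = s$ I would argue (after possibly replacing $\mathbf{a}_\ell$ by this non-singular lift) that the residue class of $\mathbf{a}_\ell$ modulo $\ell$ contains a non-singular solution. A standard Hensel argument then puts lifts of $\mathbf{a}_\ell$ modulo $\ell^{k+1}$ satisfying $Q(\mathbf{x}) \equiv s \pmod{\ell^{k+1}}$ in bijection with solutions of a linear equation, contributing $\asymp \ell^{(n-1)k}$ to $n(\ell^k)$ and yielding $\sigma_\ell(\mathbf{a}_\ell, 1, s) \gg_{Q,\ell} 1$ with implicit constant independent of $s$.

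For the tail product, I would invoke the classical lower bounds on local densities of a non-degenerate integral quadratic form in $n \geq 4$ variables. For primes $\ell' \nmid 2\disc(Q) \cdot s$, one has $\sigma_{\ell'}(s) = 1 + O(\ell'^{-(n-2)/2})$, so the corresponding subproduct converges absolutely to a positive constant depending only on $Q$. For the finitely many primes dividing $\disc(Q)$, strong~LSC at $\ell'$ provides a non-singular local solution, hence $\sigma_{\ell'}(s) \geq c_{\ell'}(Q) > 0$. For $\ell' \mid s$ with $\ell' \nmid 2\disc(Q)$, explicit Gauss-sum evaluations give $\sigma_{\ell'}(s) \gg_Q \ell'^{-\epsilon v_{\ell'}(s)}$, so the product over these primes is at least $c(Q)^{\omega(s)} s^{-\epsilon}$, which is $\gg s^{-2\epsilon}$ using $\omega(s) = O(\log s / \log \log s)$. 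Choosing $\epsilon < \delta / 2$ and multiplying all three contributions yields the stated bound.

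The main obstacle I anticipate is the lower bound on $\sigma_{\ell'}(s)$ for $\ell' \mid s$, which is essentially Siegel's classical singular-series estimate; its proof requires a case analysis splitting on whether $Q$ is anisotropic over $\QQ_{\ell'}$ together with some Gauss-sum bookkeeping. Fortunately this result is standard in the theory of quadratic forms and can be cited from, e.g., Kitaoka's \emph{Arithmetic of Quadratic Forms} or Iwaniec's \emph{Topics in Classical Automorphic Forms}, so in practice the work lies in verifying the hypotheses cleanly rather than redoing the computation.
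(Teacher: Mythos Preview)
Your approach is essentially the same as the paper's: factor $\mathfrak{S}(s)$ into the constrained factor $\sigma_\ell(\mathbf{a}_\ell,1,s)$ and the unconstrained product $\prod_{\ell'\neq\ell}\sigma_{\ell'}(s)$, bound the first by a positive constant depending only on $Q$ and $\ell$, and bound the second by $|s|^{-\delta}$ using strong~LSC. The only real difference is in the citation for the hard step: the paper invokes Browning--Dietmann \cite[Proposition~2]{browning_dietmann} directly for the bound $\mathfrak{S}'(s,Q)\gg|s\Delta_Q|^{-\delta}$ on the unconstrained singular series (and handles $\mathbf{A}\notin M_n(\ZZ)$ by passing to $(2s,2Q)$), whereas you propose to rebuild that bound from classical local-density estimates in Kitaoka or Iwaniec. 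Both routes arrive at the same inequality; the paper's is simply a shorter citation path.

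One genuine wrinkle in your treatment of the $\ell$-factor: you cannot ``replace $\mathbf{a}_\ell$ by this non-singular lift.'' The vector $\mathbf{a}_\ell$ is fixed by hypothesis, and strong~LSC at $\ell$ only guarantees that \emph{some} solution modulo $\ell^{\tau_\ell}$ is non-singular---there is no reason the given $\mathbf{a}_\ell$ satisfies $\ell\nmid\mathbf{A}\mathbf{a}_\ell$, and the lemma as stated must cover the case where it does not. What actually saves you is that $Q(\mathbf{a}_\ell)=s$ holds \emph{exactly} in $\ZZ_\ell$, not merely modulo a power of $\ell$; from this one argues directly that the local density of solutions in the residue class $\mathbf{a}_\ell+\ell\ZZ_\ell^n$ is bounded above and below by positive constants depending only on $Q$ and $\ell$. (The paper is equally terse here, writing only ``the lower bound follows by an application of Hensel's lemma.'') Once this step is stated correctly, your argument goes through.
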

	\begin{proof}
		Consider the modified singular series
		\[\mathfrak{S}'(s,Q)\coloneqq \prod_{\ell'\text{ prime}}\lim_{k\to\infty} \frac{\#\{\mathbf{x}\in(\ZZ/{\ell'}^k\ZZ)^n:Q(\mathbf{x})\equiv s\pmod {{\ell'}^{k}}\}}{\ell^{(n-1)k}}.\]
		Note that for $\ell'\neq \ell$, the term at~$\ell'$ is equal to the term at~$\ell'$ of~$\mathfrak{S}(s)$. The terms at~$\ell$ are each bounded above and below by nonzero constants in~$s$ that depend on~$\ell$ (the lower bound follows by an application of Hensel's lemma), so~$\mathfrak{S}'(s)$ and~$\mathfrak{S}(s)$ have the same rate of decay in~$s$.
		
		Browning and Dietmann prove~\cite[Proposition 2]{browning_dietmann} that for any $\delta>0$, if $\mathbf{A}\in M_n(\ZZ)$ and $(s,Q)$ satisfies strong~LSC, then
		\[\mathfrak{S}'(s,Q)\gg |s\Delta_Q|^{-\delta},\]
		with~$\Delta_Q$ the discriminant of~$Q$, and the implicit constant depending only on~$\delta$. 
		
		If $\mathbf{A}\notin M_n(\ZZ)$, then $(s,Q)$ satisfying strong~LSC implies
		\[\mathfrak{S}'(2s,2Q)\gg |2s\Delta_{2Q}|^{-\delta}.\]
		Now $\mathfrak{S}'(2s,2Q)$ and $\mathfrak{S}'(s,Q)$ are the same at every prime except~$2$, where they differ by at most a constant factor. So we reach the same conclusion for $\mathfrak{S}'(s,Q)$.		
	\end{proof}
	
	\begin{proof}[Proof of \cref{thm:localglobal}]
		Fix any prime~$\ell$, and let~$M$ be large (we will specify how large later). Let~$C_\ell$ be a generating set for~$\ZZ_\ell^n$ satisfying condtions (a) through (c), let $\mathbf{a}_\ell\in C_\ell$, and let $s\coloneqq Q(\mathbf{a}_\ell)$. Set $t_\ell=1$ and $t_{\ell'}=0$ for all primes $\ell'\neq \ell$. By \cref{lem:singseries}, the corresponding singular series is asymptotically larger than $s^{-\frac{n-2}{2}}$. So by \cref{thm:sardari}, if~$M$ is sufficiently large, there exists $\mathbf{y}\in\ZZ^n$ satisfying $Q(\mathbf{y})=s$ and $\mathbf{y}\equiv \mathbf{a}_\ell\pmod{\ell}$. Here ``sufficiently large'' may depend on~$\ell$,~$Q$, and a choice of $0<\epsilon<\frac{n-3}{4}$, but these choices can all be made at the outset.
		
		Thus there is a set $\widehat{C_\ell}\subseteq \ZZ^n$ such that for each $\mathbf{a}_\ell\in C_\ell$, there is a corresponding $\mathbf{y}\in\widehat{C_\ell}$ with $Q(\mathbf{y})=Q(\mathbf{a}_\ell)\in S$ and $\mathbf{y}\equiv\mathbf{a}_\ell\pmod \ell$. Since the inclusion $\ZZ^n\to\ZZ_\ell^n$ induces an isomorphism $\ZZ^n/\ell\ZZ^n\to\ZZ_\ell^n/\ell\ZZ_\ell^n$, and~$C_\ell$ generates~$\ZZ_\ell^n$,~$\widehat{C_\ell}$ generates~$\ZZ^n/\ell\ZZ^n$. 
		
		Since elements in~$\ZZ^n$ with norm in~$S$ generate~$\ZZ^n/\ell\ZZ^n$ for all primes~$\ell$, we can conclude that such elements generate~$\ZZ^n$.
	\end{proof}
	
	\begin{rmk}
		If every element of~$S$ is coprime to~$2p$, then a much shorter proof of \cref{thm:localglobal} can be given using Theorem 1.2 of \cite{sardari}. The authors were informed by Naser Sardari that a correction needs to be made to this result: as written it only requires~$N$ to be odd, but in fact~$N$ must also be relatively prime to the discriminant of~$Q$.
	\end{rmk}

	\subsection{Local generating sets for maximal orders}\label{sec:maxorder_localgens}

	Let~$\mathcal{O}$ be a maximal order in~$B_p$ and let~$s$ be any positive integer relatively prime to~$p$. We will show that for all primes~$\ell$, $\mathcal{O}\otimes\ZZ_\ell$ has a basis consisting of elements of norm~$s$, so that \cref{thm:genbylpower} follows from \cref{cor:basislocalglobal}.
	
	\subsubsection{$\ell\neq p$}\label{sec:splitprimes}
	In this case $\mathcal{O}\otimes \ZZ_\ell\simeq M_2(\ZZ_\ell)$, with the norm on~$\mathcal{O}$ inducing the determinant on $M_2(\ZZ_\ell)$. The elements
	\[\begin{pmatrix}
		s&0\\0&1
	\end{pmatrix},\quad \begin{pmatrix}
		s&1\\0&1
	\end{pmatrix},\quad \begin{pmatrix}
		s&0\\1&1
	\end{pmatrix},\quad \begin{pmatrix}
		1&-1\\ s&0
	\end{pmatrix}\]
	each have norm~$s$, and these evidently span $M_2(\ZZ_\ell)$.
	
	\subsubsection{$\ell=p\neq 2$}		
	Let $K/\QQ_p$ be the unique unramified quadratic extension, with Galois group generated by~$\sigma$. Then
	\[B_p\otimes \ZZ_p\simeq \left\{\begin{pmatrix}
		u&p v\\ \sigma(v)&\sigma(u)
	\end{pmatrix}:u,v\in K\right\}\subseteq M_2(K)\]
	\cite[Corollary 13.3.14]{voight}, and $\mathcal{O}\otimes\ZZ_p$ is the corresponding valuation ring~\cite[Proposition 13.3.4]{voight}, obtained by restricting~$u$ and~$v$ to be in the valuation ring of~$K$. The norm on $\mathcal{O}\otimes\ZZ_p$ is $u\sigma(u)-p v\sigma(v)$.

	\begin{rmk}\label{rmk:coprimetop}
		If~$u$ is not a multiple of~$p$, then $u\sigma(u)-p v\sigma(v)$ is not a multiple of~$p$. This shows that any basis of a maximal order $\mathcal{O}\subseteq B_p$ must contain at least two elements with norm relatively prime to~$p$.
	\end{rmk}

	If $p\neq 2$ we have $K\simeq \QQ_p(\sqrt{a})$ for some $a\in \QQ_p$ such that~$a$ is not a square modulo~$p$. Therefore $\mathcal{O}\otimes\ZZ_p\simeq \ZZ_p^4$ with quadratic form 
	\[(x,y,z,w)\mapsto Q(x,y,z,w)\coloneqq x^2-ay^2-pz^2+apw^2.\] 
	Since $p\nmid s$, the equation $x^2-ay^2-st^2=0$ defines a smooth projective conic over~$\FF_p$. This curve has $p+1\geq 3$ $\FF_p$-points, none of which has $t=0$, and no three of which lie on a common line. Thus there exist two linearly independent points $(c_1,d_1),(c_2,d_2)\in\FF_p^2$ with $c_1^2-ad_1^2=c_2^2-ad_2^2=s$ in~$\FF_p$, so we have
	\[Q(c_1,d_1,0,0)\equiv Q(c_1,d_1,1,0)\equiv Q(c_1,d_1,0,1)\equiv Q(c_2,d_2,0,0)\equiv s\pmod p.\]
	By Hensel lifting, we obtain a basis for~$\ZZ_p^4$ consisting of elements of norm~$s$.
	
	\subsubsection{$\ell=p=2$}
	As above, the norm on $\mathcal{O}\otimes\ZZ_p$ is $u\sigma(u)-p v\sigma(v)$, but this time we have $K\simeq \QQ_2(\zeta_3)$, where $\zeta_3\in K$ satisfies $\zeta_3^2+\zeta_3+1=0$. Therefore $\mathcal{O}\otimes\ZZ_2\simeq \ZZ_2^4$ with quadratic form 
	\[(x,y,z,w)\mapsto Q(x,y,z,w)\coloneqq x^2+xy+y^2-2z^2-2zw-2w^2.\]
	We have 
	\begin{align*}
		Q(1,0,0,0)&\equiv Q(0,1,0,0)\equiv Q(1,1,1,0)\equiv Q(1,1,0,1)\equiv 1\pmod 2,\\
		Q(1,1,0,0)&\equiv Q(1,0,1,1)\equiv Q(0,1,1,2)\equiv Q(0,1,2,1)\equiv 3\pmod 8,\\
		Q(1,1,1,1)&\equiv Q(1,1,1,2)\equiv Q(1,1,2,1)\equiv Q(2,1,1,0)\equiv 5\pmod 8,\\
		Q(1,0,1,0)&\equiv Q(0,1,1,0)\equiv Q(1,0,0,1)\equiv Q(0,1,1,3)\equiv 7\pmod 8,
	\end{align*}
	and for each row, the four vectors define a matrix with odd determinant. So regardless of the value of~$s$, we can Hensel lift to obtain a basis for~$\ZZ_2^4$ consisting of elements of norm~$s$.
	
	\subsubsection{Eichler orders}\label{sec:eichler}
	An \emph{Eichler order} is an intersection of two maximal orders. If~$\mathcal{O}$ is an Eichler order in~$B_p$, then $\mathcal{O}\otimes\ZZ_p$ is maximal, and for $\ell\neq p$, $\mathcal{O}\otimes \ZZ_\ell$ is conjugate to $\begin{psmallmatrix}
		\ZZ_\ell & \ZZ_\ell \\ \ell^{r_\ell}\ZZ_\ell & \ZZ_\ell
	\end{psmallmatrix}$ for some $r_\ell\geq 0$~\cite[Section 23.4.19]{voight}. The exponent~$r_\ell$ is nonzero only for finitely many primes~$\ell$, and the product $\prod_\ell \ell^{r_\ell}$ is the \emph{index} of~$\mathcal{O}$ (equal to the index of~$\mathcal{O}$ in any maximal order containing it).
	
	Let~$\ell$ be a prime dividing the index of~$\mathcal{O}$ (we necessarily have $\ell\neq p$). If $\ell\mid s$, then the elements \[\begin{pmatrix}
		s&0\\0&1
	\end{pmatrix},\quad \begin{pmatrix}
		s&1\\0&1
	\end{pmatrix},\quad \begin{pmatrix}
		s&0\\\ell^{r_\ell}&1
	\end{pmatrix},\quad \begin{pmatrix}
		1&0\\ 0&s
	\end{pmatrix}\]
	each have norm~$s$ and form a basis for $\mathcal{O}\otimes \ZZ_\ell$. If $\ell\nmid 6s$ then the elements 
	\[\begin{pmatrix}
		s&0\\0&1
	\end{pmatrix},\quad \begin{pmatrix}
		s&1\\0&1
	\end{pmatrix},\quad \begin{pmatrix}
		s&0\\\ell^{r_\ell}&1
	\end{pmatrix},\quad \begin{pmatrix}
		2s&0\\ 0&\frac12
	\end{pmatrix}\]
	each have norm~$s$ and contain~$\begin{psmallmatrix}
		3s&0\\0&0
	\end{psmallmatrix}$ and~$\begin{psmallmatrix}
		0&0\\0&3
	\end{psmallmatrix}$ in their span, so they form a basis for $\mathcal{O}\otimes \ZZ_\ell$. So if the index of~$\mathcal{O}$ is not divisible by~$2$ or~$3$, then for all primes~$\ell$ there is a basis for $\mathcal{O}\otimes \ZZ_\ell$ consisting of elements of norm~$s$; by \cref{cor:basislocalglobal}, we can conclude that~$\mathcal{O}$ has a generating set with norms in~$S$.
	
	On the other hand, if the index of~$\mathcal{O}$ is even, then~$\mathcal{O}$ is not generated by elements of odd norm. This is because for $r\geq 1$, $\det\begin{psmallmatrix}
		x&y\\ 2^rz&w
	\end{psmallmatrix}\equiv 1\pmod 2$ implies $x\equiv w\equiv 1\pmod 2$, and the span of elements of this form lie in a proper sublattice of $\mathcal{O}\otimes\ZZ_2$. For a similar reason, if the index of~$\mathcal{O}$ is a multiple of~$3$ and $i=1$~or~$2$, then~$\mathcal{O}$ is not generated by elements of norm congruent to~$i\pmod 3$. So additional constraints on~$S$ are necessary if the index is not relatively prime to~$6$.

	\section{Theta function determines maximal order: background and setup}\label{sec:quat_background}

	We now turn to the second topic of this paper: determining maximal orders of~$B_p$ by their theta functions. In this section we discuss some examples of isospectral lattices and then set up some results about the lattice structure of quaternion orders; the main results \cref{thm:theta_to_iso} and \cref{thm:succmin_to_iso} are proven in \cref{sec:thetaproof}.

	\subsection{Isospectral Lattices}\label{sec:isospectral}
	
	Two integral lattices~$\Lambda,\Lambda'$ are \emph{isospectral} if their theta functions are equal. Schiemann proved that there do not exist any pairs of non-isometric isospectral lattices of rank at most~$3$ \cite{schiemann}, but there are many pairs of non-isometric but isospectral lattices of rank~$4$, including a four-parameter family due to Conway and Sloane \cite{conwaysloane92}.
	
	Now suppose we restrict to integral lattices in a quaternion algebra~$B_p$ whose left and right orders are maximal (every such lattice is locally similar to a maximal order of~$B_p$, and every quadratic form locally similar to a maximal order can be obtained in this way up to oriented similarity \cite[Section 19.6.7]{voight}). Equivalently, one can ask if a pair~$E,E'$ of supersingular elliptic curves over~$\overline{\FF_p}$ can be identified (up to Frobenius) by counting the number of isogenies of given degree from~$E$ to~$E'$. Even in this constrained setting it is common to find multiple lattices with the same theta function; the first instance of this occurring is for $p=67$. At $p=151$ we even find two non-isometric right ideals of the \emph{same} maximal order that have equal theta functions. These observations were explored in depth by Shiota~\cite{shiota}.
	
	\begin{ex}
		We include a brief description of the isospectral right ideals in the case $p=151$. Take $B_p=\QQ\langle 1,i,j,k\rangle$ with $i^2=-1$, $j^2=-151$, and $k\coloneqq ij=-ji$. Consider the maximal order
		\[O\coloneqq \left\langle   \frac12 + \frac12j + 4k, \frac1{32}i + \frac34j + \frac{69}{32}k, j + 8k, 16k \right\rangle.\]
		This order has right ideals 
		\begin{align*}
			I_1&\coloneqq \left\langle -5 + i - j - 3k, 10 - 42i + 2j - 2k, -7 + 11i + 5j - k, 74 + 22i + 2j - 2k\right\rangle,\\
			I_2&\coloneqq \left\langle -16 + 26i + 2k, 12 + 19i + 4j - k, 48 + 26i + 2k, -4 - 31i + 4j + 5k\right\rangle,
		\end{align*}
		each of norm~$512$, and we can check that~$I_1$ and~$I_2$ have non-isomorphic left orders. For each of~$I_1$ and~$I_2$, we take the basis $x_1,x_2,x_3,x_4$ given above and compute the corresponding (normalized) Gram matrix $\frac{1}{1024}(\Tr(x_i\overline{x_j}))_{1\leq i,j\leq 4}$, yielding
		\[\mathbf{A}_1=\frac12\begin{pmatrix}
			6&2&-1&1\\
			2&12&5&4\\
			-1&5&16&6\\
			1&4&6&28
		\end{pmatrix},\qquad \mathbf{A}_2=\frac12\begin{pmatrix}
			6&0&2&3\\
			0&12&3&4\\
			2&3&14&2\\
			3&4&2&28
		\end{pmatrix}.\]
		Both matrices have determinant~$\frac{151^2}{16}$, as expected (by \cite[Proposition 16.4.3]{voight} and \cref{eq:disc}). 
		The theta functions of both lattices begin with
		\[1 + 2q^3 + 2q^6 + 2q^7 + 2q^8 + 4q^9 + 2q^{10} + 2q^{11} + 4q^{12} + 4q^{28} + 4q^{15}+\cdots\]
		and are weight~$2$ cusp forms for~$\Gamma_0(151)$ \cite[Section 40.4.5]{voight}.  One can find a basis of $M_2(\Gamma_0(151))$ consisting of~$13$ elements, and check that under projection to the first~$13$ Fourier coefficients they remain independent. (The Sturm bound predicts that the first~$25$ coefficients are sufficient, but we can get by with fewer in this case.) Thus, if two elements of $M_2(\Gamma_0(151))$ agree on these coefficients, they must be equal. Hence, these lattices are isospectral. However, the lattices are not isometric because the vectors of norm~$3$ are orthogonal to the vectors of norm~$6$ in the second lattice but not in the first.
	\end{ex}

	\subsection{Lattice geometry of quaternion orders}\label{sec:lemmas}
	
	As before, let~$B_p$ denote the quaternion algebra over~$\QQ$ ramified at~$p$ and~$\infty$. There exists an isometry $B_p\otimes\RR\simeq \RR^4$, with the norm on~$B_p$ corresponding to the square of the standard Euclidean distance on~$\RR^4$, and $\frac12\Tr(x\bar y)$ giving the standard inner product $x\cdot y$.

	Let~$\Lambda$ be a lattice in~$B_p$ of rank $1\leq k\leq 4$; we say~$\Lambda$ is an \emph{integral lattice} if for all $x\in\Lambda$ we have $N(x),\Tr(x)\in\ZZ$. Given a basis $\{v_1,\ldots,v_k\}$ of an integral lattice~$\Lambda$ we can define a \emph{Gram matrix} 
	\[\mathbf{A}_\Lambda\coloneqq (v_i\cdot v_j)_{1\leq i,j\leq k}=(\tfrac12\Tr(v_i\overline{v_j}))_{1\leq i,j\leq k},\]
	and \emph{determinant} $\det \Lambda\coloneqq \det\mathbf{A}_\Lambda$ as in \cref{sec:notation}. The quaternion norm~$N$ on~$B_p$ defines an integral quadratic form on~$\Lambda$, and $\mathbf{A}_\Lambda\in \frac12M_k(\ZZ)$. 
	
	Now suppose~$\Lambda$ is an order in~$B_p$, so it is contained in a maximal order~$\mathcal{O}$ with finite index. We define the \emph{discriminant} of~$\Lambda$ to be $\disc \Lambda\coloneqq \det(2\mathbf{A}_{\Lambda})$. Note that the discriminant of~$\Lambda$ is a positive integer, while the determinant may not be an integer. In particular, if~$\mathcal{O}$ is a maximal order containing~$\Lambda$ then
	\begin{align}\label{eq:disc}
		\disc\Lambda=16\det\Lambda =[\mathcal{O}:\Lambda]^2p^2
	\end{align}
	\cite[Lemma 15.2.15 and Theorem 15.5.5]{voight}.
	If~$\Lambda$ has basis $v_1,\ldots,v_4$, then we have
	\begin{equation}\label{eq:tracemat}
		\disc \Lambda=\det(\Tr(v_i\bar{v_j}))_{1\leq i,j\leq 4}=|\det(\Tr(v_iv_j))_{1\leq i,j\leq 4}|
	\end{equation}
	\cite[Exercise 15.13]{voight}, where we call $(\Tr(v_iv_j))_{1\leq i,j\leq 4}$ the \emph{trace matrix} of the basis $\{v_1,\ldots,v_4\}$.

	\begin{defn}\label{def:succmin}
		Let~$\Lambda$ be a lattice of rank~$k$ with a positive definite quadratic form~$Q$. For $1\leq i\leq k$, the~$i$-th \emph{successive minimum} of~$\Lambda$ is the minimum value~$D_i$ such that the rank of the $\ZZ$-module generated by $\{v\in \Lambda:Q(v)\leq D_i\}$ is greater than or equal to~$i$. An ordered list $v_1,\ldots,v_k\in \Lambda$ \emph{attains the successive minima of~$\Lambda$} if it is linearly independent and $Q(v_i)=D_i$ for each $i=1,\ldots,k$.
	\end{defn}
	
	\begin{rmk}
		This is a non-standard definition, following the notation of \cite{chevyrev_galbraith}; if $(\Lambda,Q)$ is embedded isometrically in a Euclidean space~$\RR^n$, then the successive minima under \cref{def:succmin} are the squares of the corresponding successive minima under the standard definition. There always exists a list of elements attaining the successive minima, for instance by~\cite[Section VIII.1.2 Lemma 1]{cassels}.
	\end{rmk}

	
	\begin{lem}\label{lem:succminbasis}
		Let~$\Lambda$ be a lattice of rank $k\leq 3$. If a list of~$k$ vectors attains the sucessive minima of~$\Lambda$, then these vectors form a basis for~$\Lambda$. The same holds true for $k=4$ if we additionally assume that~$\Lambda$ is an order in~$B_p$ for~$p$ odd.
	\end{lem}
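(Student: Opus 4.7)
The case $k\le 3$ is a classical result in the geometry of numbers; the plan is to argue by contradiction. If $v_1,\dots,v_k$ attain the successive minima but generate a proper sublattice $\Lambda' := \langle v_1,\dots,v_k\rangle \subsetneq \Lambda$, pick $w\in\Lambda\setminus\Lambda'$, translate by $\ZZ$-combinations of the $v_i$ to assume $w = \sum a_iv_i$ with $|a_i|\le\tfrac12$, and let $j$ be the largest index with $a_j\ne0$. Then $v_1,\dots,v_{j-1},w$ are linearly independent, so $Q(w)\ge D_j$ by definition of successive minima; on the other hand, projecting onto the orthogonal complement of $\langle v_1,\dots,v_{j-1}\rangle$ and combining $|a_j|\le\tfrac12$ with a Cauchy--Schwarz estimate on the remaining cross terms (an estimate which closes precisely because $j\le k\le 3$) gives $Q(w)<D_j$, a contradiction. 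Borderline equality cases $D_{j-1}=D_j$ are handled by interchanging $w$ with $v_{j-1}$.

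For $k=4$ with $\Lambda$ an order in $B_p$ and $p$ odd, first note that $1\in\Lambda$ has $Q(1)=1$, so $D_1=1$ and we may take $v_1=1$. Set $\Lambda_3 := \Lambda \cap \QQ\langle v_1,v_2,v_3\rangle$; every vector of $\Lambda_3$ linearly independent from $v_1,\dots,v_{i-1}$ is also independent in $\Lambda$, hence has norm $\ge D_i$, so $v_1,v_2,v_3$ attain the successive minima of the rank-$3$ lattice $\Lambda_3$ and by the case just proved form a basis of $\Lambda_3$. It therefore suffices to show that the image $\overline{v_4}$ generates the rank-one quotient $\Lambda/\Lambda_3\cong\ZZ$. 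Suppose instead $v_4 = dw+\lambda$ with $d\ge 2$, $\lambda\in\Lambda_3$, and $w\in\Lambda$. Letting $\pi$ denote orthogonal projection onto $\Lambda_3^\perp$, one has $|\pi(v_4)| = d|\pi(w)|$, and after shifting $w$ within its $\Lambda_3$-coset to minimize $|w-\pi(w)|$ one obtains $Q(w) \le |\pi(w)|^2 + r_3^2$, where $r_3$ is the covering radius of $\Lambda_3$. Since $w,v_1,v_2,v_3$ are linearly independent, $Q(w)\ge D_4$, and combined with $|\pi(w)|^2 = |\pi(v_4)|^2/d^2 \le D_4/4$ this forces $r_3^2 \ge \tfrac34 D_4$.

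The main obstacle is to show this tight constraint is incompatible with $\Lambda$ being a quaternion order for $p$ odd. Explicit geometry — using that $v_1=1$ is orthogonal to the trace-zero subspace of $B_p\otimes\RR$ — bounds $r_3^2 \le \tfrac14(D_1+D_2+D_3)$, with equality only when $v_1,v_2,v_3$ is an orthogonal basis of $\Lambda_3$. Combining with the previous paragraph forces $D_1=D_2=D_3=D_4$ and an orthogonal basis; moreover, tracking the inequalities shows $v_4$ must itself be orthogonal to $v_1,v_2,v_3$. Hence $\Lambda$ would contain four pairwise orthogonal elements of equal norm including $v_1=1$; setting $D=D_4=\cdots=D_1=1$, the conditions $v_i\cdot 1 = \tfrac12\Tr(v_i) = 0$ force $\Tr(v_i)=0$ and hence $v_i^2=-1$ for $i\ge 2$, while $v_i\cdot v_j = 0$ for distinct $i,j\ge 2$ forces $v_iv_j+v_jv_i = 0$. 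Thus $1,v_2,v_3,v_4$ generate a $\QQ$-subalgebra isomorphic to the standard quaternion algebra $B_2 = \QQ\langle 1,i,j,k\rangle$. The hypothesis $p$ odd then delivers the contradiction: $B_p\otimes\QQ_p$ is a division algebra while $B_2\otimes\QQ_p\simeq M_2(\QQ_p)$, so $B_2$ cannot embed in $B_p$. The borderline case where these estimates are realised is exactly the Hurwitz maximal order in $B_2$, where $1,i,j,k$ attain $D_1=\cdots=D_4=1$ but miss $\tfrac12(1+i+j+k)$, confirming the necessity of the hypothesis.
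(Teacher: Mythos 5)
The overall strategy is sound and genuinely different from the paper's, but there is one real gap: the reduction ``we may take $v_1=1$'' at the start of the $k=4$ case. The lemma quantifies over \emph{every} list attaining the successive minima, and $D_1=1$ does not force $v_1=\pm 1$: an order can contain other elements of norm $1$ (in the Hurwitz order, $i,j,k$ all have norm $1$), and replacing $v_1$ by $1$ changes the sublattice $\langle v_1,\dots,v_4\rangle$ whose index you are trying to control, and may even destroy linear independence of the list. The only place you actually need $v_1=1$ is the final step, where $1$ must be one of the four pairwise orthogonal norm-one vectors in order to read off $\Tr(v_i)=0$, $v_i^2=-1$, and conclude that they generate a copy of $B_2$; without that normalization the subalgebra argument does not apply as written. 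The fix is easy and is essentially what the paper does: once your inequalities force $D_1=\cdots=D_4=1$ (note $D_1=1$ holds because $1\in\Lambda$ and the form is integral and positive definite), an orthogonal quadruple of norm-one vectors spanning a sublattice of index $d\geq 2$ gives $\det\Lambda=1/d^2$, while \cref{eq:disc} gives $\det\Lambda=[\mathcal{O}:\Lambda]^2p^2/16$; equating the two forces $p=2$. (Your covering-radius bound $r_3^2\leq\frac14(D_1+D_2+D_3)$ does not need the orthogonality of $1$ to the trace-zero space either --- it is the standard Gram--Schmidt rounding bound valid for any basis --- so the rest of your $k=4$ argument is already independent of the normalization.)

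Two smaller points. For $k\leq 3$ you are invoking a classical fact, which is legitimate, but the sketched ``Cauchy--Schwarz on cross terms'' does not close for $j=3$ by the naive triangle inequality, since $\frac12(\sqrt{D_1}+\sqrt{D_2}+\sqrt{D_3})$ can exceed $\sqrt{D_3}$; the genuine argument requires the Gram--Schmidt orthogonalization with the bounds $|\mu_{ij}|\leq\frac12$ on the coefficients, or simply a citation. For comparison, the paper's proof is much shorter: it cites Martinet's classification (the unique exceptional similarity class in rank at most $4$ is the cubic centered lattice $\mathbf{D}_4$) and rules that case out for odd $p$ by the same determinant congruence as above. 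Your route re-proves the relevant part of Martinet's result by hand via covering-radius estimates, which is more self-contained but is also exactly where the remaining care is needed.
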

	Note that a counterexample for $p=2$ is given by the Hurwitz quaternions, $\mathcal{Z}\coloneqq \langle 1,i,j,\frac12(1+i+j+k)\rangle$ where $i^2=j^2=-1$ and $ij=k$. The elements $1,i,j,k$ attain the successive minima, but $\frac12(1+i+j+k)$ is not contained in their span.
	\begin{proof}
		Among all lattices of rank at most~$4$, the rank~$4$ cubic centered lattice~ $\mathbf{D}_4$ (which is isometric to~$\mathcal{Z}$ after rescaling)  is the only lattice up to similarity for which an arbitrary list of vectors attaining the successive minima is not always a basis~\cite[Corollary 6.2.3]{martinet}. If~$\Lambda$ is an order in~$B_p$ for~$p$ odd then the first successive minimum is equal to~$1$, so if~$\Lambda$ were a cubic centered lattice then its Gram matrix would have determinant~$\frac14$. This contradicts the fact that the determinant of an order in~$B_p$ must be an integer multiple of~$\frac{p^2}{16}$ by \cref{eq:disc}.
	\end{proof}
	
	\subsection{The Gross Lattice}\label{sec:gross_lattice}
	
	We define an additive map  $\tau\colon B_p\to B_p$ by
	\[\tau(x)=2x-\Tr(x).\] 
	If we restrict~$\tau$ to an order $\mathcal{O}\subseteq B_p$,  then the kernel of this map is~$\ZZ$, and the image is the \emph{Gross lattice} of~$\mathcal{O}$,
	\[\mathcal{O}^T\coloneqq \tau(\mathcal{O})=\{2x-\Tr(x):x\in\mathcal{O}\}\]
	(cf.~\cite[Section 12]{gross}).
	The Gross lattice is a strict subset of~$\mathcal{O}^0$, the subset of~$\mathcal{O}$ consisting of trace zero elements; more precisely, we have $\mathcal{O}^T=\mathcal{O}^0\cap(\ZZ+2\mathcal{O})$.
	Some relations between sublattices of~$\mathcal{O}$ are given below; inclusion arrows below are labeled by the index of one sublattice in the other, and~$\obot$ denotes orthogonal direct sum.
	\begin{equation}\label{eq:gross_containments}
		\begin{tikzcd} 		 
			\ZZ+ 2\mathcal{O}=\ZZ\obot\mathcal{O}^T\arrow[r,hook,"4"] &\ZZ\obot\mathcal{O}^0\arrow[r,hook,"2"]\arrow[d,"\tau"] & \mathcal{O}\arrow[d,"\tau"] & & \text{(rank 4)} \\
			& 2\mathcal{O}^0\arrow[r,hook,"2"] & \mathcal{O}^T \arrow[r,hook,"4"] & \mathcal{O}^0 & \text{(rank 3)}
		\end{tikzcd}
	\end{equation}
	The index of $\ZZ\obot\mathcal{O}^0$ in~$\mathcal{O}$ can be computed by noting that $\ZZ\obot\mathcal{O}^0$ consists of all elements of~$\mathcal{O}$ with even trace, and that~$\mathcal{O}$ must contain an element of odd trace (since $\det\mathcal{O}=\frac{p^2}{16}$ implies the Gram matrix of~$\mathcal{O}$ cannot be an integer matrix).

	\begin{rmk}
		Given a lattice $\Lambda\subseteq B_p$, define the \emph{dual lattice}~$\Lambda^\sharp$ by
		\begin{align*} 
			\Lambda^\sharp&\coloneqq\{y\in \QQ\Lambda:\Tr(x\bar{y})\in\ZZ\text{ for all }x\in\Lambda\}.
		\end{align*}
		While this will not be used in the rest of the paper, we note that~$\mathcal{O}^T$ can be related to~$(\mathcal{O}^\sharp)^0$, the trace zero part of the dual lattice of~$\mathcal{O}$. (This lattice arises in the correspondence between quaternion orders and ternary quadratic forms via the Clifford algebra construction; see \cite[Chapter 22]{voight}.) Specifically, we have the equality
		\[(\tfrac12\mathcal{O}^T)^\sharp=(\mathcal{O}^\sharp)^0,\]
		because when $\Tr(y)=0$ and $x\in\mathcal{O}$ we have $\Tr(\tau(x)\bar y)=2\Tr(x\bar y)$. Since~$\frac12\mathcal{O}^T$ is the orthogonal projection of~$\mathcal{O}$ onto~$B_p^0$, we can summarize this by saying that the dual of the projection equals the restriction of the dual.
	\end{rmk}

	The primary motivation for studying the Gross lattice is that elements of~$\mathcal{O}^T$ correspond to embeddings of imaginary quadratic orders in~$\mathcal{O}$. 
	Observe that for any $x\in B_p$ we have the equality
	\begin{equation}\label{eq:normtau}
		N(\tau(x))=4N(x)-\Tr(x)^2.
	\end{equation}
	So for any $\beta\in\mathcal{O}^T\setminus\{0\}$,~$-N(\beta)$ is equal to the discriminant of the quadratic order generated by a preimage of~$\beta$ under~$\tau$.

	Given an imaginary quadratic order~$R$ of discriminant~$-D$ for~$D>0$, an \emph{orientation} of~$R$ is a choice of $x\in R$ satisfying $x^2+D=0$ (which we will usually denote~$\sqrt{-D}$). Given two oriented orders $(R,x)$ and $(R,x')$, an \emph{oriented isomorphism} is an isomorphism $R\to R'$ sending $x\mapsto x'$. Note that for every imaginary quadratic discriminant~$-D$ there are exactly two oriented quadratic orders~$R$ up to oriented isomorphism, sent to each other by the nontrivial Galois action on~$R$.

	Given an imaginary quadratic order~$R$ and an embedding $\phi:R\hookrightarrow \mathcal{O}$, we say the embedding is \emph{optimal} if $\QQ\phi(R)\cap \mathcal{O}=\phi(R)$. Given a nonzero element~$v$ of a lattice~$L$, we say~$v$ is \emph{primitive} if there does not exist $w\in L$ and $n\geq 2$ with $v=nw$.  The following proposition is implicit in \cite[Proposition 12.9]{gross}; we include the proof for completeness. \black
	
	\begin{prop}\label{lem:embeddingsnorms}
		There is a one-to-one correspondence between nonzero elements $\beta\in\mathcal{O}^T$ and embeddings $R\hookrightarrow\mathcal{O}$  of oriented imaginary quadratic orders~$R$ up to oriented isomorphism.  Under this correspondence we have $\disc R=-N(\beta)$, and the embedding $R\hookrightarrow \mathcal{O}$ is optimal if and only if the corresponding~$\beta$ is primitive.
	\end{prop}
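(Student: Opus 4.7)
The plan is to use the map $\tau$ directly to exhibit the bijection, then leverage the classification of orders in an imaginary quadratic field for the primitivity statement.

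For the forward direction, send a nonzero $\beta \in \mathcal{O}^T$ to the oriented embedding $(R,\beta) \hookrightarrow \mathcal{O}$, where $R \coloneqq \ZZ[x]$ for any preimage $x$ of $\beta$ under $\tau$. Since $\ker \tau = \ZZ$, the ring $R$ is independent of the choice of preimage. Because $B_p$ is a division algebra and $x \notin \QQ$ (otherwise $\beta = 0$), the ring $R$ is an order in an imaginary quadratic field. Squaring $\beta = 2x - \Tr(x)$ yields $\beta^2 = \Tr(x)^2 - 4N(x)$, which equals $\disc R$; together with \cref{eq:normtau} this shows both that $\beta$ is a valid orientation of $R$ and that $\disc R = -N(\beta)$.

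For the inverse direction, send $\phi\colon (R,\sqrt{-D}) \hookrightarrow \mathcal{O}$ to $\beta \coloneqq \phi(\sqrt{-D})$. Trace zero is automatic. To see $\beta \in \ZZ + 2\mathcal{O}$, write $R = \ZZ + f\mathcal{O}_K$ and perform a short case analysis on whether $\disc \mathcal{O}_K$ is $\equiv 0$ or $\equiv 1 \pmod 4$: in the first case $\sqrt{-D} = 2\cdot (f\sqrt{-d_0})$; in the second $\sqrt{-D} = -f + 2f\omega$ with $\omega = (1+\sqrt{-d_0})/2$; both lie in $\ZZ + 2R$. Mutual inversion of the two maps then follows by matching the $\ZZ$-ambiguity in preimages of $\tau$ against the quotient by oriented isomorphism.

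For the optimal-versus-primitive equivalence, use that the orders of $K \coloneqq \QQ\beta$ form a chain $\ZZ + f\mathcal{O}_K$ totally ordered by divisibility of conductors. If $\beta = n\gamma$ for some $n \geq 2$ and $\gamma \in \mathcal{O}^T$, the oriented embedding associated to $\gamma$ gives an order $R' \subseteq K \cap \mathcal{O}$ with $|\disc R'| = |\disc R|/n^2 < |\disc R|$, forcing $\phi(R) \subsetneq R'$ and ruining optimality. Conversely, if $\phi$ is not optimal, take $R' \coloneqq K \cap \mathcal{O}$, set $m \coloneqq [R' : \phi(R)] \geq 2$, and pick the orientation $\sqrt{-D'} \in R'$ satisfying $m \sqrt{-D'} = \sqrt{-D}$ in $K$; the image $\gamma \in \mathcal{O}$ of $\sqrt{-D'}$ lies in $\mathcal{O}^T$ by the inverse-direction check, and $\beta = m\gamma$.

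The main technical step is the case analysis establishing $\sqrt{-D} \in \ZZ + 2R$; once that is in place, the bijection is essentially bookkeeping, and the optimal-versus-primitive equivalence reduces to the discriminant comparison above.
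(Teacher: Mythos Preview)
Your proof is correct and follows essentially the same route as the paper: both set up the bijection by sending $\beta$ to the inclusion of $\ZZ[x]$ (for any $\tau$-preimage $x$) and sending $\phi$ to $\phi(\sqrt{-D})$. The only noteworthy difference is that where you verify $\sqrt{-D}\in\ZZ+2R$ by a case split on $\disc\mathcal{O}_K\pmod 4$, the paper avoids cases by using the uniform generator $\tfrac{D+\sqrt{-D}}{2}$ of $R$ (so that $\sqrt{-D}=2\cdot\tfrac{D+\sqrt{-D}}{2}-D$ immediately lands in $\ZZ+2R$) together with the parity relation $D\equiv\Tr(x)\pmod 2$ coming from \cref{eq:normtau}; conversely, you supply more detail on the optimal--primitive equivalence, which the paper leaves as ``straightforward.''
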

	\begin{proof}
		The imaginary quadratic order $R=\ZZ[\frac12(D+\sqrt{-D})]$ has discriminant $-D$ and orientation determined by the element~$\sqrt{-D}$. To an embedding $\phi\colon R\to\mathcal{O}$ we associate the element 
		\[\beta\coloneqq \phi(\sqrt{-D})=2\phi\left(\frac{D+\sqrt{-D}}{2}\right)-\Tr\left(\phi\left(\frac{D+\sqrt{-D}}{2}\right)\right)\in\mathcal{O}^T.\]
		\black
		Conversely, given an element $\beta=2x-\Tr(x)$ of~$\mathcal{O}^T$, set $D=N(\beta)$ and associate to~$\beta$ the embedding of $\ZZ[\frac12 (D+\sqrt{-D})]$ into~$\mathcal{O}$ determined by 
		\begin{align*}
			\frac{D+\sqrt{-D}}{2}&\mapsto x+\frac{D-\Tr(x)}{2}.
		\end{align*}
		We have $x+\frac{D-\Tr(x)}{2}\in\mathcal{O}$ because $D\equiv \Tr(x)\pmod 2$ by \cref{eq:normtau}. Further, both sides have trace~$D$, and applying \cref{eq:normtau} to the fact that $\tau\left(x+\frac{D-\Tr(x)}{2}\right)=\beta$ we find that both sides have norm $\frac14 (D^2+D)$. \black		
		Hence, this map is well-defined.
		It is straightforward to verify that these two associations are inverses and that the remaining claims are satisfied.
	\end{proof}
	
	Note that if two embeddings $R\to\mathcal{O}$ are related by Galois conjugation on~$R$, then the corresponding elements of~$\mathcal{O}^T$ are negatives of each other. 

	For any $\beta\in \mathcal{O}^T$, there exists a unique $\alpha\in\mathcal{O}$ satisfying $\tau(\alpha)=\beta$ and $\Tr(\alpha)\in\{0,1\}$ (explicitly, we can let $\delta\in\{0,1\}$ satisfy $\delta\equiv N(\beta)\pmod 2$ and set $\alpha=\frac12(\delta+\beta)$). By \cref{eq:normtau}, this element~$\alpha$ attains the minimal norm among all elements of~$\mathcal{O}$ mapping under~$\tau$ to~$\beta$.
	
	\begin{lem}\label{lem:OTsucmin_to_Osucmin}
		Let $\beta_1,\beta_2,\beta_3\in\mathcal{O}^T$ be linearly independent, and let $\alpha_1,\alpha_2,\alpha_3\in\mathcal{O}$ satisfy $\tau(\alpha_i)=\beta_i$ and $\Tr(\alpha_i)\in\{0,1\}$ for each~$i$. The following are equivalent:
		\begin{enumerate}[label=(\alph*)]
			\item The successive minima for~$\mathcal{O}^T$ are attained by $\beta_1,\beta_2,\beta_3$.
			\item The successive minima for~$\mathcal{O}$ are attained by $1,\alpha_1,\alpha_2,\alpha_3$, and if $\Tr(\alpha_i)=0$ for some $i=1,2,3$, then for all $\gamma\in\mathcal{O}$ with $N(\gamma)=N(\alpha_i)$ that are linearly independent from $1,\alpha_1,\ldots,\alpha_{i-1}$, we have $\Tr(\gamma)=0$. 
		\end{enumerate}
	\end{lem}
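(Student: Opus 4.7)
My plan is to use the norm identity $N(\tau(\alpha)) = 4N(\alpha) - \Tr(\alpha)^2$ from \cref{eq:normtau}, together with the $\ZZ$-linearity of $\tau$ and the fact that $\tau(1)=0$, to transfer information between $\mathcal{O}$ and $\mathcal{O}^T$. Writing any $\gamma\in\mathcal{O}$ as $\gamma = a_0 + \sum_{j=1}^{3} a_j\alpha_j$ with $a_0, a_j\in\QQ$, we have $\tau(\gamma) = \sum_j a_j \beta_j$, so $\gamma$ is linearly independent from $1,\alpha_1,\ldots,\alpha_{i-1}$ if and only if $\tau(\gamma)$ is linearly independent from $\beta_1,\ldots,\beta_{i-1}$. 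Since $B_p$ is a division algebra, $1\in\mathcal{O}$ always attains the first successive minimum of $\mathcal{O}$ (equal to~$1$), so both implications reduce, for each $i\in\{1,2,3\}$, to controlling the signs of $N(\gamma)-N(\alpha_i)$ and $N(\tau(\gamma))-N(\beta_i)$ using the rearranged identity
$$4(N(\gamma)-N(\alpha_i)) = (N(\tau(\gamma))-N(\beta_i)) + (\Tr(\gamma)^2-\Tr(\alpha_i)^2).$$

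For $(a)\Rightarrow(b)$, I would take any $\gamma\in\mathcal{O}$ linearly independent from $1,\alpha_1,\ldots,\alpha_{i-1}$; by the preceding observation, $N(\tau(\gamma))\geq N(\beta_i)$. The displayed identity then gives $4(N(\gamma)-N(\alpha_i)) \geq \Tr(\gamma)^2 - \Tr(\alpha_i)^2 \geq -1$, and since $N(\gamma)-N(\alpha_i)\in\ZZ$, this forces $N(\gamma)\geq N(\alpha_i)$, proving that $1,\alpha_1,\alpha_2,\alpha_3$ attain the successive minima of $\mathcal{O}$. For the second clause of (b): if $\Tr(\alpha_i)=0$ and $N(\gamma)=N(\alpha_i)$, the identity becomes $\Tr(\gamma)^2 = N(\beta_i) - N(\tau(\gamma)) \leq 0$, so $\Tr(\gamma)=0$.

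For $(b)\Rightarrow(a)$, given $\delta\in\mathcal{O}^T$ linearly independent from $\beta_1,\ldots,\beta_{i-1}$, I would lift to the unique $\gamma\in\mathcal{O}$ with $\tau(\gamma)=\delta$ and $\Tr(\gamma)\in\{0,1\}$. Then $\gamma$ is linearly independent from $1,\alpha_1,\ldots,\alpha_{i-1}$, so $N(\gamma)\geq N(\alpha_i)$ by the first part of (b), and $N(\delta)-N(\beta_i) = 4(N(\gamma)-N(\alpha_i)) + \Tr(\alpha_i)^2 - \Tr(\gamma)^2$ is nonnegative in every case except possibly $\Tr(\alpha_i)=0$, $\Tr(\gamma)=1$, $N(\gamma)=N(\alpha_i)$. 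This last possibility is precisely what the second clause of (b) rules out: it forces $\Tr(\gamma)=0$ whenever $N(\gamma)=N(\alpha_i)$, so in the problematic subcase we must have $N(\gamma)\geq N(\alpha_i)+1$, yielding $N(\delta)\geq N(\beta_i)+3$. The main obstacle is not conceptual but notational: one has to carefully track the parity interaction between $\Tr(\alpha_i)^2$ and $\Tr(\gamma)^2$ across the four relevant cases, and recognize that the second clause of (b) --- which may look like an ad hoc technicality --- is exactly the extra constraint needed to invert the forward implication cleanly.
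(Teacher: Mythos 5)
Your proposal is correct and follows essentially the same route as the paper: both directions rest on the identity $N(\tau(x))=4N(x)-\Tr(x)^2$, the transfer of linear independence through $\tau$ (whose kernel on $\QQ$-span is $\QQ$), and the observation that the second clause of (b) exactly excludes the one case ($\Tr(\alpha_i)=0$, $\Tr(\gamma)=1$, $N(\gamma)=N(\alpha_i)$) where the inequality could fail. The paper phrases the argument via spans of the sets of short vectors and invokes the mod-$4$ congruence of norms in $\mathcal{O}^T$ where you instead use the rearranged identity directly, but these are cosmetic differences.
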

	\begin{proof}
		Observe that $1,\alpha_1,\alpha_2,\alpha_3$ are linearly independent, because applying~$\tau$ to a linear dependence would induce a dependence among $\beta_1,\beta_2,\beta_3$. Now assume (a). For $i=1,2,3$, let~$S_i$ denote the set of $\gamma\in\mathcal{O}$ satisfying $N(\gamma)<N(\alpha_i)$. For any $\gamma\in S_i$ we have
		\[N(\tau(\gamma))=4N(\gamma)-\Tr(\gamma)^2\leq 4(N(\alpha_i)-1)<4N(\alpha_i)-\Tr(\alpha_i)^2=N(\beta_i),\]
		and since~$N(\beta_i)$ is the~$i$-th successive minimum for~$\mathcal{O}^T$, the span of~$\tau(\gamma)$ for all $\gamma\in S_i$ has dimension at most~$i-1$. Thus the span of~$S_i$ has dimension at most~$i$, proving by induction on~$i$ that $1,\alpha_1,\alpha_2,\alpha_3$ attain the successive minima for~$\mathcal{O}$. Now for the sake of contradiction suppose that $\Tr(\alpha_i)=0$, and there exists $\gamma\in\mathcal{O}$ linearly independent from $1,\alpha_1,\ldots,\alpha_{i-1}$ with $N(\gamma)=N(\alpha_i)$ and  $\Tr(\gamma)\neq 0$. \black Then $\beta_1,\ldots,\beta_{i-1},\tau(\gamma)$ are~$i$ independent elements of~$\mathcal{O}^T$, but
		\[N(\tau(\gamma))=4N(\gamma)- \Tr(\gamma)^2 <4N(\alpha_i)=N(\beta_i),\]
		contradicting the assumption that~$\beta_i$ attains the~$i$-th successive minimum for~$\mathcal{O}^T$. Thus (a) implies (b).
		
		Now assume (b). For $i=1,2,3$, let~$S_i$ denote the set of $x\in\mathcal{O}^T$ satisfying $N(x)<N(\beta_i)$. For any $x\in S_i$ there exists $\gamma\in\mathcal{O}$ with $\tau(\gamma)=x$ and $\Tr(\gamma)\in\{0,1\}$. If $N(x)<N(\beta_i)-1$, then in fact $N(x)\leq N(\beta_i)-3$ because the norm of every element of~$\mathcal{O}^T$ is either~$0$~or~$3$~mod~$4$  by \cref{eq:normtau},  and so
		\[N(\gamma)=\frac14(N(\tau(\gamma))+\Tr(\gamma)^2)\leq\frac14(N(\beta_i)-2)< N(\alpha_i).\]
		Since~$N(\alpha_i)$ is the $(i+1)$-st successive minimum for~$\mathcal{O}$, this implies $1,\alpha_1,\ldots,\alpha_{i-1},\gamma$ must be linearly depenendent. On the other hand, suppose $N(x)=N(\beta_i)-1$. Then $N(\gamma)=N(\alpha_i)$, $\Tr(\gamma)=1$, and $\Tr(\alpha_i)=0$, so once again $1,\alpha_1,\ldots,\alpha_{i-1},\gamma$ are linearly dependent. Either way we can conclude that $x=\tau(\gamma)$ is in the~$\QQ$-span of $\beta_1,\ldots,\beta_{i-1}$. This shows that the span of~$S_i$ has dimension less than~$i$, so~$N(\beta_i)$ is indeed the~$i$-th successive minimum for~$\mathcal{O}^T$.
	\end{proof}
	
	\begin{rmk}
		It is possible for $1,\alpha_1,\alpha_2,\alpha_3$ to attain the successive minima of $\mathcal{O}$, but $\tau(\alpha_1),\tau(\alpha_2),\tau(\alpha_3)$ not attain the successive minima of $\mathcal{O}^T$. A simple example is given by the Hurwitz quaternions $\mathcal{Z}= \langle 1,i,j,\frac12(1+i+j+k)\rangle$ with $i^2=j^2=-1$ and $ij=k$.  The successive minima of $\mathcal{Z}$ are all $1$, and the successive minima of $\mathcal{Z}^T$ are all $3$. The elements $1,i,j,k$ attain the successive minima of $\mathcal{O}$, but $N(\tau(i))=N(\tau(j))=N(\tau(k))=4$, and so $\tau(i),\tau(j),\tau(k)$ do not realize the successive minima of $\mathcal{Z}^T$. And indeed, condition (b) of \cref{lem:OTsucmin_to_Osucmin} does not hold; for any of $i=1,2,3$, we can take $\gamma=\frac12(1+i+j+k)$. \black
	\end{rmk}

	\black	
	
	\begin{lem}\label{lem:smallcases}
		Let $D_1<15$. Up to isomorphism, there is at most one maximal order $\mathcal{O}\subseteq B_p$ such that the first successive minimum of~$\mathcal{O}^T$ is equal to~$D_1$.
	\end{lem}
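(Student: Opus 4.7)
The plan is to enumerate the values of $D_1$ and reduce to class-number-one facts about imaginary quadratic orders. First I would note that \cref{eq:normtau} forces every element of $\mathcal{O}^T$ to have norm $\equiv 0$ or $3 \pmod 4$, so any value $D_1 < 15$ must lie in $\{3,4,7,8,11,12\}$. For each such value, the unique imaginary quadratic order $R = R_{-D_1}$ of discriminant $-D_1$---namely $\ZZ[\omega]$, $\ZZ[i]$, $\ZZ[\tfrac{1+\sqrt{-7}}{2}]$, $\ZZ[\sqrt{-2}]$, $\ZZ[\tfrac{1+\sqrt{-11}}{2}]$, and $\ZZ[\sqrt{-3}]$---has class number one.

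Next I would apply \cref{lem:embeddingsnorms}. Any $\beta \in \mathcal{O}^T$ attaining $N(\beta) = D_1$ must be primitive, since otherwise writing $\beta = n \gamma$ with $\gamma \in \mathcal{O}^T$ and $n \geq 2$ would produce an element of strictly smaller norm $D_1/n^2$, contradicting the minimality of $D_1$. Hence $\beta$ corresponds to an optimal embedding of the oriented order $R$ into $\mathcal{O}$.

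It then suffices to show that at most one isomorphism class of maximal order in $B_p$ admits an optimal embedding of a fixed class-number-one order~$R$. I would invoke Eichler's optimal embedding theorem (see, e.g., \cite[Chapter 30]{voight}): the weighted count, over isomorphism classes of maximal orders of $B_p$, of $\mathcal{O}^\times$-conjugacy classes of optimal embeddings $R \hookrightarrow \mathcal{O}$ is proportional to $h(R)$ multiplied by a nonnegative local contribution at~$p$. Since $h(R)=1$, all maximal orders of $B_p$ optimally containing $R$ lie in a single isomorphism class, which yields the lemma.

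The main obstacle I anticipate is locating the precise form of Eichler's theorem and verifying that no selectivity phenomenon obstructs it at the ramified prime~$p$. For the quaternion algebra $B_p$---ramified only at $p$ and $\infty$, with $\QQ$ as its center---selectivity does not occur (the relevant optional genus field condition is vacuous), so the bound by $h(R)$ is unconditional and the argument goes through cleanly.
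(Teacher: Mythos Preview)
Your approach is exactly the paper's: restrict $D_1$ to values that are $0$ or $3\pmod 4$, note that each such $D_1<15$ gives a class-number-one quadratic order $R$, and then invoke the fact that $R$ embeds in a unique type of maximal order. The paper cites \cite[Corollary~30.4.23]{voight} directly for this last step and does not pass through primitivity or optimality (any embedding of $R$ suffices, so your primitivity argument, while correct, is not needed).

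There is one genuine gap in your Eichler-count justification. The local embedding factor at the ramified prime $p$ equals $1-\bigl(\tfrac{-D_1}{p}\bigr)$, which is $2$ when $p$ is inert in $R$. So even with $h(R)=1$ the total count of optimal embeddings (summed over right-ideal classes) can be $2$, and nothing in your argument as written prevents those two embeddings from landing in two distinct types. In fact they always land in the same type (the two embeddings differ only by orientation on $R$), but this is exactly the content of \cite[Corollary~30.4.23]{voight} and is proved there via Deuring lifting rather than by a bare inequality. You should cite that result directly, as the paper does; the selectivity discussion is a red herring here, since selectivity is a phenomenon for indefinite algebras and does not arise for $B_p$ over $\QQ$.
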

	\begin{proof}
		If~$D_1$ is not~$0$~or~$3$~mod~$4$ then there is no maximal order with first successive minimum~$D_1$ by \cref{lem:embeddingsnorms}. For all remaining~$D_1$, the quadratic order of discriminant~$-D_1$ has class number~$1$; in this case there is a unique maximal order (up to isomorphism) in which this quadratic order embeds~\cite[Corollary 30.4.23]{voight}, and therefore a unique~$\mathcal{O}$ (up to isomorphism) such that~$\mathcal{O}^T$ has an element of norm~$D_1$. 
	\end{proof}

	\begin{rmk}
		If~$-D$ is a fundamental discriminant, an explicit maximal order admitting an optimal embedding of the ring of integers of discriminant~$-D$ can be written down explicitly using \cite[Equation (5)]{dorman} (see also \cite[Theorem 1]{dorman}).
	\end{rmk}

	\subsection{Constraints on short Gross lattice vectors}\label{sec:smalldisc_constraints}
	
	A key idea we will apply is that there are very strict constraints on  arrangements of short elements in the Gross lattice.  This idea can be made precise using a construction due to Kaneko~\cite{kaneko}, which we present as \cref{prop:kaneko} with minor modifications for our convenience. Kaneko used this construction to prove a bound on the discriminants of quadratic orders embedding into a quaternion order, a special case of which is given by \cref{lem:distinct_embeddings}. In addition to this bound (a constraint on  the norms of independent elements in~$\mathcal{O}^T$ ), we also establish a constraint on  the angle between two elements of~$\mathcal{O}^T$,  \cref{lem:uniquetr}.\footnote{This is the only part of the argument that relies the fact that our quaternion algebra~$B_p$ is ramified at a single prime; if we consider orders in definite quaternion algebras ramified at multiple primes, the corresponding constraint becomes much less strict.} Together, these constraints will be sufficient to show that a quaternion order is uniquely determined up to isomorphism by the  numbers of short vectors in~$\mathcal{O}^T$. 
	
	\begin{prop}\label{prop:kaneko}
		Let~$\mathcal{O}$ be an order in~$B_p$, and let~$\beta_1,\beta_2\in\mathcal{O}^T$ be linearly independent. Then
		\[
		N(\beta_1)N(\beta_2)-\frac14\Tr(\beta_1\bar{\beta_2})^2
		\]
		is a positive integer multiple of~$4p$. 
	\end{prop}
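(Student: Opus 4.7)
The plan is to rewrite the expression as a commutator norm and then argue separately for positivity, integrality, and divisibility by~$p$.

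First I would observe that since $\beta_1,\beta_2\in\mathcal{O}^T$ are trace zero, $\bar{\beta}_i=-\beta_i$ and so $\Tr(\beta_1\bar{\beta}_2)=-\Tr(\beta_1\beta_2)$. Applying the general identity $N(x-\tfrac{1}{2}\Tr(x))=N(x)-\tfrac{1}{4}\Tr(x)^2$ with $x=\beta_1\beta_2$ (whose norm is $N(\beta_1)N(\beta_2)$), and noting that $\beta_1\beta_2-\tfrac{1}{2}\Tr(\beta_1\beta_2)=\tfrac{1}{2}[\beta_1,\beta_2]$, the expression in the statement becomes $\tfrac{1}{4}N([\beta_1,\beta_2])$. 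Lifting via $\beta_i=2\alpha_i-\Tr(\alpha_i)$ for some $\alpha_i\in\mathcal{O}$ kills the scalar parts under the commutator, so $[\beta_1,\beta_2]=4[\alpha_1,\alpha_2]$, and the quantity equals $4N([\alpha_1,\alpha_2])$. It therefore suffices to show that $N([\alpha_1,\alpha_2])$ is a positive integer multiple of~$p$.

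Positivity and integrality follow from a short argument: $[\alpha_1,\alpha_2]\in\mathcal{O}$, so its norm is a non-negative integer; and if the commutator vanished then $\alpha_1,\alpha_2$ would lie in a common commutative, hence at most two-dimensional, subalgebra of the division algebra~$B_p$, making $1,\alpha_1,\alpha_2$ and hence $\beta_1,\beta_2$ linearly dependent.

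For divisibility by~$p$, I would invoke the local structure recalled in \cref{sec:maxorder_localgens}: $\mathcal{O}\otimes\ZZ_p$ sits inside a maximal order whose completion at~$p$ is the valuation ring of the local division algebra $B_p\otimes\QQ_p$, with residue field the commutative field $\FF_{p^2}$. Since the images of $\alpha_1,\alpha_2$ in~$\FF_{p^2}$ commute, $[\alpha_1,\alpha_2]$ lies in the unique maximal two-sided ideal, and because a uniformizer of this valuation ring has reduced norm equal to~$p$ up to a unit in~$\ZZ_p^{\times}$, one concludes $p\mid N([\alpha_1,\alpha_2])$. The main obstacle is really just bookkeeping in the first step to identify the given expression with $\tfrac{1}{4}N([\beta_1,\beta_2])$; once the commutator form is in hand, both the positivity and the appearance of the prime~$p$ follow cleanly from the fact that $B_p$ is ramified at~$p$ and at~$\infty$.
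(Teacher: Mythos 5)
Your proof is correct, and it takes a genuinely different route from the paper's. The paper lifts $\beta_i$ to minimal-norm preimages $\alpha_i\in\mathcal{O}$, forms the order $\Lambda=\langle 1,\alpha_1,\alpha_2,\alpha_1\alpha_2\rangle$, computes its trace matrix explicitly, and reads off $N(\beta_1)N(\beta_2)-\frac14\Tr(\beta_1\bar{\beta_2})^2=4p\,[\mathcal{O}':\Lambda]$ from the discriminant formula $\disc\Lambda=[\mathcal{O}':\Lambda]^2p^2$ for a maximal order $\mathcal{O}'\supseteq\Lambda$; positivity comes from interpreting the quantity as the Gram determinant of $\langle\beta_1,\beta_2\rangle$. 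You instead identify the quantity as $\tfrac14N([\beta_1,\beta_2])=4N([\alpha_1,\alpha_2])$ (your algebra here checks out: $\Tr(\beta_1\beta_2)=\beta_1\beta_2+\beta_2\beta_1$ for trace-zero elements, and the scalar parts drop out of the commutator), get positivity from $B_p$ being a division algebra together with the independence of $\beta_1,\beta_2$, and get $p\mid N([\alpha_1,\alpha_2])$ from the fact that the reduction of $\mathcal{O}\otimes\ZZ_p$ modulo the maximal ideal of the local valuation ring lands in the commutative residue field $\FF_{p^2}$. The two arguments are two faces of the same phenomenon — the discriminant of $\Lambda$ is essentially the square of the commutator norm — but your version avoids the $4\times 4$ determinant computation and isolates more transparently where the factor of $p$ comes from (ramification at $p$), which is exactly the point flagged in the paper's footnote about why this constraint is specific to quaternion algebras ramified at a single finite prime. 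What the paper's version buys in exchange is an interpretation of the integer multiple as the index $[\mathcal{O}':\Lambda]$, which is reused implicitly elsewhere (e.g.\ in \cref{lem:succminbounds} and the commented-out small-discriminant analysis). One tiny point worth making explicit in your write-up: a nontrivial linear dependence among $1,\alpha_1,\alpha_2$ cannot involve only the constant term, so applying $\tau$ really does produce a nontrivial dependence between $\beta_1$ and $\beta_2$.
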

	The number $N(\beta_1)N(\beta_2)-\frac14\Tr(\beta_1\bar{\beta_2})^2$ is the determinant of the Gram matrix of the lattice $\langle \beta_1,\beta_2\rangle$, so this can be interpreted as saying that every parallelogram in~$\mathcal{O}^T$ has area $2\sqrt{kp}$ for some positive integer~$k$. 
	A version of this is proven by Kaneko in \cite[Section 3]{kaneko}; see also equation (3.2) of \cite{chevyrev_galbraith}.  A similar idea also appears in \cite{goren_lauter}.
	\begin{proof}
		The value is positive because it is the determinant of the Gram matrix of the lattice $\langle \beta_1,\beta_2\rangle$, so it suffices to show divisibility by~$4p$. For $i=1,2$, set $D_i=N(\beta_i)$, and let $\alpha_i\in\mathcal{O}$ be a minimal norm preimage of~$\beta_i$ under~$\tau$; that is, take $\delta_i\in\{0,1\}$ with $\delta_i\equiv D_i\pmod 2$ and set $\alpha_i=\frac12(\beta_i+\delta_i)$. 
		Define an order 
		\[\Lambda=\langle 1,\alpha_1,\alpha_2,\alpha_1\alpha_2\rangle\subseteq \mathcal{O}.\] Letting $s=\Tr(\alpha_1\alpha_2)$, one can compute the trace matrix of~$\Lambda$ (\cref{eq:tracemat}),
		\[\mathbf{T}=\begin{pmatrix}
			2 & \delta_1 & \delta_2 & s\\
			\delta_1 & \delta_1^2-2N(\alpha_1) & s & -\delta_2N(\alpha_1)+\delta_1s\\
			\delta_2 & s & \delta_2^2-2N(\alpha_2) &  -\delta_1N(\alpha_2)+\delta_2s\\
			s & -\delta_2N(\alpha_1)+\delta_1s & -\delta_1N(\alpha_2)+\delta_2s & s^2 - 2N(\alpha_1)N(\alpha_2)
		\end{pmatrix}.\]
		Since $N(\alpha_i)=\frac14(\delta_i+D_i)$, we compute
		\[\disc\Lambda=|\det\mathbf{T}|=\frac{(D_1D_2-(2s-\delta_1\delta_2)^2)^2}{16}.\]
		Noting that
		\begin{align}\label{eq:eventrace}
			\Tr(\beta_1\beta_2)=\Tr((2\alpha_1-\delta_1)(2\alpha_2-\delta_2))=4\Tr(\alpha_1\alpha_2)-2\delta_1\delta_2,
		\end{align}
		we can replace $2s-\delta_1\delta_2$ with $\frac12\Tr(\beta_1\beta_2)=-\frac12\Tr(\beta_1\bar{\beta_2})$. Since~$\Lambda$ is an order in~$B_p$, we can use \cref{eq:disc} to conclude that $\frac14(D_1D_2-\frac14\Tr(\beta_1\bar{\beta_2})^2)$ is an integer multiple of~$p$.
	\end{proof}
	
	Some immediate consequences of this calculation are as follows.
	
	\begin{cor}\label{lem:succminbounds}
		Let~$\mathcal{O}$ be an order in~$B_p$ of discriminant $\Delta\in\ZZ$.
		Let $D_1,D_2,D_3$ be the successive minima of~$\mathcal{O}^T$. Then  
		\begin{align*}
			D_1&\leq 2\Delta^{1/3},\\
			\frac{4p}{D_1}\leq D_2&\leq\left(\frac{8\Delta}{D_1}\right)^{1/2}.
		\end{align*}
	\end{cor}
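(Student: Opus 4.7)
The plan is to treat $\mathcal{O}^T$ as a rank-$3$ positive definite lattice in $B_p^0\otimes\RR$ and combine three lattice-geometric inputs: Hermite's bound on the first successive minimum, \cref{prop:kaneko}, and the sharp rank-$3$ form of Minkowski's theorem on successive minima. First I would compute $\det\mathcal{O}^T$: from the diagram \cref{eq:gross_containments}, $\ZZ\obot\mathcal{O}^T$ sits inside $\mathcal{O}$ as an orthogonal index-$8$ sublattice, and combining this with the identity $\det\mathcal{O}=\Delta/16$ from \cref{eq:disc} yields $\det\mathcal{O}^T=4\Delta$.

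The bound $D_1\leq 2\Delta^{1/3}$ is then immediate from Hermite's inequality in rank~$3$ with Hermite constant $\gamma_3=2^{1/3}$: $D_1\leq\gamma_3(\det\mathcal{O}^T)^{1/3}=2^{1/3}(4\Delta)^{1/3}=2\Delta^{1/3}$. For the lower bound $D_2\geq 4p/D_1$, I would pick linearly independent $\beta_1,\beta_2\in\mathcal{O}^T$ with $N(\beta_i)=D_i$, which exist by the definition of successive minima, and invoke \cref{prop:kaneko}: the quantity $D_1D_2-\tfrac14\Tr(\beta_1\bar\beta_2)^2$ is a positive integer multiple of $4p$, so dropping the nonnegative trace term gives $D_1D_2\geq 4p$.

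For the upper bound $D_2\leq(8\Delta/D_1)^{1/2}$, I would appeal to the sharp rank-$3$ product inequality: for any positive definite rank-$3$ lattice, $D_1D_2D_3\leq\gamma_3^3\det=2\det$, with equality attained for the face-centered cubic lattice. Applied to $\mathcal{O}^T$ this gives $D_1D_2D_3\leq 8\Delta$, and since $D_2\leq D_3$ we conclude $D_1D_2^2\leq D_1D_2D_3\leq 8\Delta$, whence the claim after taking square roots. The main obstacle is precisely this last step: Minkowski's second theorem in its usual $2^n/V_n$-form only yields $D_1D_2D_3\leq(36/\pi^2)\det$, which is not tight enough for the claimed $8\Delta$; one must instead invoke the sharper bound with constant $\gamma_n^n$, valid in ranks $n\leq 4$ via Minkowski reduction theory.
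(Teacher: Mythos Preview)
Your proposal is correct and follows essentially the same approach as the paper. The paper computes $\det\mathcal{O}^T=4\Delta$ exactly as you do, cites the sharp rank-$3$ bound $D_1D_2D_3\leq 2\det\mathcal{O}^T$ (attributed to Siegel rather than phrased via $\gamma_3^3$), and derives both upper bounds from this single inequality via $D_1^3\leq D_1D_2D_3$ and $D_1D_2^2\leq D_1D_2D_3$; the lower bound on $D_2$ comes from \cref{prop:kaneko} just as in your argument. Your separate invocation of Hermite's constant for $D_1$ is equivalent to the paper's route, since $\gamma_3^3=2$ makes Hermite's bound a special case of the product inequality.
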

	\begin{proof}
		Since $\det\mathcal{O}=\frac\Delta{16}$, \cref{eq:gross_containments} implies that 
		\[\det\mathcal{O}^T=\det(\ZZ\obot\mathcal{O}^T)= 8^2\det\mathcal{O}= 4\Delta.\]
		(Recall that $\det\Lambda$ refers to the determinant of the Gram matrix of~$\Lambda$, so $\Lambda'\subseteq\Lambda$ implies $\det\Lambda'=[\Lambda:\Lambda']^2\det\Lambda$.) 
		We also have $D_1D_2D_3\leq 2\det \mathcal{O}^T$ (a bound specific to rank~$3$ lattices) by \cite[Lecture XI (25)]{siegel}.		
		The desired upper bounds follow from $D_1^3\leq D_1D_2D_3$ and $D_1D_2^2\leq D_1D_2D_3$. The lower bound $D_1D_2\geq 4p$ follows from \cref{prop:kaneko}.
	\end{proof}
	
	\begin{cor}\label{lem:distinct_embeddings}
		If a quadratic order~$R$ has two embeddings in~$\mathcal{O}$ with distinct images, then $\disc R> p$.
	\end{cor}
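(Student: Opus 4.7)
The plan is to apply \cref{prop:kaneko} to the two Gross lattice vectors associated with the embeddings.  Fix orientations and let $\beta_1,\beta_2\in\mathcal{O}^T$ correspond under \cref{lem:embeddingsnorms} to the two embeddings of $R$; both have norm $D\coloneqq -\disc R$.  Distinct images force $\beta_1,\beta_2$ to be linearly independent, for if $\beta_2=c\beta_1$ then equality of norms gives $c^2=1$, and $\beta$ and $-\beta$ correspond to the same unoriented image in~$\mathcal{O}$.

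Then by \cref{prop:kaneko} the Gram determinant $D^2-\tfrac14\Tr(\beta_1\bar\beta_2)^2$ is a positive integer multiple of~$4p$.  Using $\mathcal{O}^T\subseteq\ZZ+2\mathcal{O}$ from \cref{eq:gross_containments}, the product $\beta_1\bar\beta_2$ lies in $\ZZ+2\mathcal{O}$, whose elements have even trace; hence $s\coloneqq \tfrac12\Tr(\beta_1\bar\beta_2)$ is an integer.  Replacing $\beta_2$ by $-\beta_2$ if necessary we may take $s\geq 0$, and strict Cauchy--Schwarz for the linearly independent pair $\beta_1,\beta_2$ gives $s<D$.

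What remains is a short factoring argument.  Writing $(D-s)(D+s)=D^2-s^2=4pk$ with $k\geq 1$, the factors $D\pm s$ share parity (they differ by $2s$), and their product being divisible by~$4$ rules out both being odd, so set $D-s=2a$ and $D+s=2b$ with $a,b\in\ZZ$, $a\geq 1$ (because $s<D$ and $D-s$ is a positive even integer), and $b\geq a$.  Then $ab=pk$, so $p$ divides $a$ or~$b$; whichever factor $p$ divides is itself at least~$p$, and we conclude $D=a+b\geq p+1>p$, as desired.  Nothing here is deep: the only care required is in verifying that $s$ is an integer and in choosing signs so that $s\in[0,D)$, after which the desired inequality drops out of divisibility by~$p$.
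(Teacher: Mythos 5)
Your proof is correct and follows essentially the same route as the paper: apply \cref{prop:kaneko} to the two Gross lattice vectors, observe that $\tfrac12(D+s)$ and $\tfrac12(D-s)$ are positive integers whose product is divisible by $p$, and conclude $D>p$. The only differences are cosmetic --- you make the linear independence of $\beta_1,\beta_2$ explicit (the paper leaves it implicit) and you justify the integrality of $s$ via multiplicative closure of $\ZZ+2\mathcal{O}$ rather than the paper's trace identity \cref{eq:eventrace}; both are valid.
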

	\begin{proof}
		This is a special case of \cite[Theorem 2']{kaneko}. Let $D\coloneqq \disc R$, and $\beta_1,\beta_2\in\mathcal{O}^T$ be the elements corresponding to the two embeddings of~$R$ under \cref{lem:embeddingsnorms}. Then from \cref{prop:kaneko} we obtain 
		\[p\mid \left(\frac{D+t}{2}\right)\left(\frac{D-t}{2}\right)\]
		where $t=\frac12\Tr(\beta_1\bar\beta_2)\in\ZZ$ by \cref{eq:eventrace}. Each factor is an integer: $D+t$ and $D-t$ have the same parity, and since their product is a multiple of~$4$, both must be even. Thus~$p$ divides one of the factors, so $p\leq \frac12(D+t)$. Since $D^2-t^2>0$ we have $t<D$, so $p<D$.
	\end{proof}
	
	
	Recall that $(v_1,v_2)\mapsto \frac12\Tr(v_1\overline{v_2})$ defines an inner product on~$B_p$. The following result says that if two elements of~$\mathcal{O}^T$ are sufficiently small, then their norms uniquely determine the angle between them (up to negating either element). This is one of the most important conceptual ingredients of the proofs of \cref{thm:theta_to_iso} and \cref{thm:succmin_to_iso}: while the theta function records lengths of elements but loses all information about angles between them, this result allows us to recover information about angles from information about lengths.
	
	\begin{cor}\label{lem:uniquetr}
		Let $\beta_1,\beta_2$ be independent elements of~$\mathcal{O}^T$. Suppose $N(\beta_1)\leq p$, and that ~$\beta_2$ has minimal norm in $\beta_2+\ZZ\beta_1$.  Then $|\frac12\Tr(\beta_1\bar{\beta_2})|$ equals the smallest positive square root of $N(\beta_1)N(\beta_2)$ modulo~$p$.
	\end{cor}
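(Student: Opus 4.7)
The strategy is to combine the congruence coming from Kaneko's construction (\cref{prop:kaneko}) with a geometric bound coming from the minimality hypothesis on~$\beta_2$.

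First I would set $t \coloneqq \frac12\Tr(\beta_1\bar{\beta_2})$ and observe that $t\in\ZZ$: indeed, since $\beta_1$ and $\beta_2$ both lie in~$\mathcal{O}^T\subseteq \mathcal{O}^0$, we have $\bar{\beta_i}=-\beta_i$, and the computation in \cref{eq:eventrace} (applied to linearly independent $\beta_1,\beta_2\in\mathcal{O}^T$) shows $\Tr(\beta_1\bar{\beta_2})=-\Tr(\beta_1\beta_2)$ is an even integer. Applying \cref{prop:kaneko} then yields
\[
N(\beta_1)N(\beta_2)-t^2 \equiv 0 \pmod{p},
\]
so $t^2 \equiv N(\beta_1)N(\beta_2)\pmod p$, i.e.\ $|t|$ (mod~$p$) is some square root of $N(\beta_1)N(\beta_2)$ modulo~$p$.

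Next I would use the minimality assumption to bound $|t|$. Since $\beta_2$ minimizes the norm in $\beta_2+\ZZ\beta_1$, for every $k\in\ZZ$ we have
\[
0 \leq N(\beta_2+k\beta_1) - N(\beta_2) = k^2 N(\beta_1) + 2kt.
\]
Taking $k=\pm 1$ gives $|t| \leq \tfrac12 N(\beta_1) \leq \tfrac p2$, and since $p$ is odd and~$t$ is an integer, actually $|t|\leq (p-1)/2$.

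Finally, I would verify that $|t|$ is the smallest positive (equivalently, the unique nonnegative) square root of $N(\beta_1)N(\beta_2)$ in $\{0,1,\dots,(p-1)/2\}$: any positive integer $s$ with $s^2\equiv t^2\pmod p$ and $s\leq (p-1)/2$ must satisfy $p\mid (s-|t|)(s+|t|)$, but both factors have absolute value strictly less than~$p$, so either $s=|t|$ or $s+|t|=0$; since $s>0$ only the first option survives. Thus~$|t|$ is the unique such value, matching the statement.

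The only delicate step is the parity/range bookkeeping in the final paragraph (needing $p$ odd so that $|t|\leq N(\beta_1)/2\leq p/2$ actually forces $|t|\leq (p-1)/2$, which is what rules out the coincidence $s+|t|=p$); everything else is a direct combination of \cref{prop:kaneko} with a one-variable minimization.
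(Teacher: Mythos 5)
Your proof is correct and follows essentially the same route as the paper's: apply \cref{prop:kaneko} to get the congruence $t^2\equiv N(\beta_1)N(\beta_2)\pmod p$ (with $t=\frac12\Tr(\beta_1\bar{\beta_2})\in\ZZ$ by \cref{eq:eventrace}), then expand $N(\beta_2\pm\beta_1)-N(\beta_2)\geq 0$ to get $|t|\leq\frac12 N(\beta_1)\leq\frac p2$ and conclude by uniqueness of the square root in that range. Your extra care about the parity bookkeeping ($p$ odd forcing $|t|\leq(p-1)/2$) is a slightly more explicit version of the paper's closing remark, not a different argument.
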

	\begin{proof}
		By \cref{prop:kaneko} we have $\frac14\Tr(\beta_1\bar{\beta_2})^2\equiv N(\beta_1)N(\beta_2)\pmod{4p}$,  so that $|\frac12\Tr(\beta_1\bar{\beta_2})|$, an integer by \cref{eq:eventrace}, is a square root of $N(\beta_1)N(\beta_2)$ modulo~$p$.		
		Expanding $N(\beta_2\pm \beta_1)-N(\beta_2)\geq 0$, we obtain $\mp \Tr(\beta_1\bar{\beta_2})\leq N(\beta_1)$; since this is true for both choices of sign, we have 
		\[0\leq|\tfrac12\Tr(\beta_1\bar \beta_2)|\leq \frac{N(\beta_1)}{2}\leq \frac p2.\]
		There is a unique square root of~$D_1D_2$~modulo~$p$ in this interval.
	\end{proof}
	
	\section{Theta function determines maximal order}\label{sec:thetaproof}

	In this section we prove \cref{thm:succmin_to_iso} and \cref{thm:theta_to_iso}.	We begin in \cref{sec:mins_to_order} with a proof of \cref{thm:succmin_to_iso}, the statement that the successive minima of~$\mathcal{O}^T$ determine the isomorphism type of~$\mathcal{O}$. So to prove \cref{thm:theta_to_iso}, all that remains to show is that the theta function of~$\mathcal{O}$ determines the successive minima $D_1,D_2,D_3$ of~$\mathcal{O}^T$. In \cref{sec:theta_to_mins} we introduce a decomposition of the theta function of~$\mathcal{O}$. Using this decomposition and the results of \cref{sec:smalldisc_constraints}, we show that the theta function of~$\mathcal{O}$ determines~$D_1$ and~$D_2$, and in \cref{sec:determining_D3} we show that the theta function of~$\mathcal{O}$ determines~$D_3$.

	\subsection{Successive minima of Gross lattice determines the order}\label{sec:mins_to_order}
	
	Let~$p$ be an odd prime, and~$\mathcal{O}$ an order in~$B_p$ of discriminant~$r^2p^2$. Let $\beta_1,\beta_2,\beta_3\in\mathcal{O}^T$ attain the successive minima $D_1\leq D_2\leq D_3$ of~$\mathcal{O}^T$, and assume  $D_1\geq 8r^2$.  We will begin by showing that~$\mathcal{O}^T$ is determined up to isometry by $D_1,D_2,D_3$. 
	
	For each pair $1\leq i<j\leq 3$, let $0\leq T_{ij}\leq \frac{p}{2}$ be the unique integer satisfying $T_{ij}^2\equiv D_iD_j\pmod p$.  Using \cref{lem:succminbounds} we have 
	\[N(\beta_1)\leq N(\beta_2)\leq \sqrt{\frac{8r^2p^2}{D_1}}\leq p,\]
	so by \cref{lem:uniquetr}, we have $|\frac12\Tr(\beta_i\bar{\beta_j})|=T_{ij}$ for each pair $i,j$.
	
	Since $\beta_1,\beta_2,\beta_3$ attain successive minima for the rank~$3$ lattice~$\mathcal{O}^T$, they form a basis by \cref{lem:succminbasis}. Let $\mathbf{A}=(\frac12\Tr(\beta_i\bar{\beta_j}))_{i,j}$ be the corresponding Gram matrix. By replacing~$\beta_i$ with $-\beta_i=\bar{\beta_i}$ if necessary for some values of~$i$, we can ensure that any two of the equations 
	\[\frac12\Tr(\beta_1\bar{\beta_2})=T_{12},\qquad\frac12\Tr(\beta_1\bar{\beta_3})=T_{13},\qquad\frac12\Tr(\beta_2\bar{\beta_3})=T_{23}\] 
	hold. So if in addition we have $T_{ij}=0$ for some pair $i,j$, then we can choose $\beta_1,\beta_2,\beta_3$ so that $\frac12\Tr(\beta_i\bar{\beta_j})=T_{ij}$ for all $i,j$.  Thus,~$\mathcal{O}^T$ is determined up to isometry.
	
	On the other hand, suppose all~$T_{ij}$ are nonzero. Then without loss of generality we have either $\mathbf{A}=\mathbf{A}_+$ or $\mathbf{A}=\mathbf{A}_-$, where
	\[\mathbf{A}_\pm\coloneqq \begin{pmatrix}
		N(\beta_1)& T_{12}&\pm  T_{13}\\
		T_{12}&N(\beta_2)& T_{23}\\
		\pm  T_{13}& T_{23}&N(\beta_3)
	\end{pmatrix}.\]
	Now the orthogonal direct sum $\ZZ\obot \mathcal{O}^T$ is a sublattice of~$\mathcal{O}$, and so $16\det \mathbf{A}$ must be a multiple of~$p^2$. But we have
	\[16\det(\mathbf{A}_+)-16\det(\mathbf{A}_-)=4T_{12}T_{23}T_{13}.\]
	Since the integers~$T_{ij}$ satisfy $0<T_{ij}\leq\frac{p}{2}$, this difference is not a multiple of~$p$, and therefore only one of $16\det(\mathbf{A}_+)$ and $16\det(\mathbf{A}_-)$ can be a multiple of~$p$. This determines~$\mathbf{A}$ uniquely, and so again,~$\mathcal{O}^T$ is determined up to isometry.  If we replace $\beta_1,\beta_2,\beta_3$ with $-\beta_1,-\beta_2,-\beta_3$, then we preserve~$\mathcal{O}^T$ (and~$\mathbf{A}$) but obtain a basis with opposite orientation.  Thus~$\mathcal{O}^T$ is determined up to orientation-preserving isometry.
	
	Now let $\mathcal{O}_1,\mathcal{O}_2$ be two orders in~$B_p$, each of index~$r$ in some (perhaps different) maximal order. Suppose that~$\mathcal{O}_1^T$ and~$\mathcal{O}_2^T$ have the same successive minima. We established above that there exists an orientation-preserving isometry $\varphi:\mathcal{O}_1^T\to\mathcal{O}_2^T$, which extends by linearity to an orientation-preserving isometry $\varphi:B_p^0\to B_p^0$ on the trace~$0$ subspace of~$B_p$. Every such isometry can be written as a conjugation map $\varphi(x)=\gamma^{-1}x\gamma$ for some $\gamma\in B_p^\times$ \cite[Proposition 4.5.10]{voight}. Finally, by a result of Chevyrev and Galbraith~\cite[Lemma 4]{chevyrev_galbraith}, the conjugation map $\varphi:\mathcal{O}_1^T\to\mathcal{O}_2^T$ extends to an isomorphism $\mathcal{O}_1\to\mathcal{O}_2$. This proves \cref{thm:succmin_to_iso}. 
	
	\subsection{Decomposing the theta series along fibers of~$\tau$}\label{sec:theta_to_mins}
	
	
	Let  $\tau\colon B_p\to B_p^0$ denote the map $\tau(x)=2x-\Tr(x)$. Given any integral lattice $L\subseteq B_p$ that contains~$\ZZ$, the fibers of~$\tau$ partition~$L$ into cosets of~$\ZZ$, allowing us to decompose the theta function of~$L$ into a sum over these cosets.  
	
	Define power series $\theta_0,\theta_1\in \ZZ[[q]]$ by
	\begin{align*}
		\theta_0(q)&=\sum_{n\in\ZZ} q^{n^2}=1+2q+2q^4+2q^9+\cdots,\\
		\theta_1(q)&=\sum_{n\in\ZZ} q^{n^2+n}=2+2q^2+2q^6+2q^{12}+\cdots.
	\end{align*}
	
	\begin{lem}\label{lem:thetadecomp}
		Given an integral lattice $L\supseteq\ZZ$, we have
		\[\theta_{L}(q)=\left(\sum_{\substack{\beta\in \tau(L)\\N(\beta)\equiv 0\;\mathrm{mod}\;4}} q^{N(\beta)/4}\right)\theta_0(q)+\left(\sum_{\substack{\beta\in \tau(L)\\N(\beta)\equiv 3\;\mathrm{mod}\;4}} q^{(1+N(\beta))/4}\right)\theta_1(q).\]
	\end{lem}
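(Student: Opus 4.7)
The plan is to partition $L$ into the fibers of $\tau$ and sum the contributions to $\theta_L(q)$ fiber by fiber. Since $\tau$ is $\ZZ$-linear with kernel $\ZZ$, each fiber $\tau^{-1}(\beta)\cap L$ is a coset of $\ZZ$ inside $L$; as noted in the paragraph before \cref{lem:OTsucmin_to_Osucmin}, each such coset contains a unique preimage $\alpha\in L$ with $\Tr(\alpha)\in\{0,1\}$, which is forced by the parity relation $N(\beta)=4N(\alpha)-\Tr(\alpha)^2$ from \cref{eq:normtau}.

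Next I would compute the norms along the coset $\alpha+\ZZ$. Using $N(\alpha+n)=N(\alpha)+n\Tr(\alpha)+n^2$ for $n\in\ZZ$, I would split into two cases according to the parity of $\Tr(\alpha)$, which by \cref{eq:normtau} is determined by the residue of $N(\beta)$ modulo $4$:

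\begin{itemize}
    \item If $\Tr(\alpha)=0$, then $N(\beta)\equiv 0\pmod 4$, we have $N(\alpha)=N(\beta)/4$, and $N(\alpha+n)=N(\beta)/4+n^2$. The resulting contribution to $\theta_L(q)$ from this fiber is
    \[\sum_{n\in\ZZ} q^{N(\beta)/4+n^2}=q^{N(\beta)/4}\,\theta_0(q).\]
    \item If $\Tr(\alpha)=1$, then $N(\beta)\equiv 3\pmod 4$, we have $N(\alpha)=(1+N(\beta))/4$, and $N(\alpha+n)=(1+N(\beta))/4+n^2+n$. The contribution is
    \[\sum_{n\in\ZZ} q^{(1+N(\beta))/4+n^2+n}=q^{(1+N(\beta))/4}\,\theta_1(q).\]
\end{itemize}

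Summing these contributions over all $\beta\in\tau(L)$ (grouped by the residue of $N(\beta)$ modulo $4$) yields the claimed decomposition. There is no real obstacle here: the proof is essentially bookkeeping, and the only point that requires a moment of care is verifying that each coset has a well-defined canonical representative with $\Tr\in\{0,1\}$ and that the parity of $N(\beta)$ mod $4$ correctly partitions $\tau(L)$ into the two cases, both of which follow directly from \cref{eq:normtau}.
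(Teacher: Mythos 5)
Your argument is correct and follows essentially the same route as the paper: both decompose $\theta_L$ over the fibers of $\tau$, which are cosets of $\ZZ$, and compute the norms along each coset, splitting into the two cases $N(\beta)\equiv 0,3\pmod 4$ via \cref{eq:normtau}. The paper parametrizes the fiber as $\tfrac12\beta+n$ (resp.\ $\tfrac12\beta+n+\tfrac12$) and uses orthogonality of $\beta$ to $1$, while you use the canonical representative $\alpha$ with $\Tr(\alpha)\in\{0,1\}$ and the identity $N(\alpha+n)=N(\alpha)+n\Tr(\alpha)+n^2$; these are the same computation.
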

	In the case $L=\ZZ$ we have $\tau(L)=\{0\}$, and this reduces to the trivial observation $\theta_\ZZ(q)=\theta_0(q)$.
	\begin{proof}
		We can write
		\[\theta_{L}(q)=\sum_{x\in L}q^{N(x)}=\sum_{\beta\in\tau(L)}\sum_{x\in \tau^{-1}(\beta)}q^{N(x)}.\]	
		For all $\beta\in\tau(L)$,~$N(\beta)$ is either~$0$~or~$3$~mod~$4$ by \cref{eq:normtau}. We will determine the sum of~$q^{N(x)}$ over~$x$ in $\tau^{-1}(\beta)$ in each of these two cases.
		
		If $N(\beta)\equiv 0\pmod 4$, then every $x\in\tau^{-1}(\beta)$ has even trace (cf.~\cref{lem:embeddingsnorms}), so $\tau^{-1}(\beta)=\{\frac12\beta+n:n\in\ZZ\}$. Since~$\beta$ is orthogonal to~$1$ we have
		\[\sum_{x\in \tau^{-1}(\beta)}q^{N(x)}=\sum_{n\in\ZZ}q^{N(\beta/2)+n^2}=q^{N(\beta)/4}\theta_0(q).\]
		
		If $N(\beta)\equiv 3\pmod 4$, then every $x\in\tau^{-1}(\beta)$ has odd trace, so that $\tau^{-1}(\beta)=\{\frac12\beta+n+\frac12:n\in\ZZ\}$. Then
		\[
		\sum_{x\in \tau^{-1}(\beta)}q^{N(x)}=\sum_{n\in\ZZ}q^{N(\beta/2)+(n+\frac12)^2}=q^{(N(\beta)+1)/4}\theta_1(q).\qedhere\]
	\end{proof}

	\begin{rmk}
		By identifying each of the terms in \cref{lem:thetadecomp} as the even or odd parts of appropriate theta functions, we can rewrite the equality more elegantly as
		\begin{align*}
			\theta_L(q^4)&={\color{white}+}\tfrac14\big(\theta_{\tau(L)}(q)+\theta_{\tau(L)}(-q)\big)\big(\theta_{\ZZ}(q)+\theta_{\ZZ}(-q)\big)\\
			&{\color{white}=} +\tfrac14\big(\theta_{\tau(L)}(q)-\theta_{\tau(L)}(-q)\big)\big(\theta_{\ZZ}(q)-\theta_{\ZZ}(-q)\big)\\
			&={\color{white}+}\tfrac12\big(\theta_{\tau(L)}(q)\theta_{\ZZ}(q)+\theta_{\tau(L)}(-q)\theta_{\ZZ}(-q)\big);
		\end{align*}
		This can also be obtained by recognizing~$2L$ as the set of elements of even norm in $\ZZ\obot\tau(L)$ (as in \cref{eq:gross_containments}). However, the form given in the lemma statement will be more convenient, as it displays more clearly the contributions to $\theta_L(q)$ from individual elements of $\tau(L)$.
	\end{rmk}

	One consequence of this lemma is that the theta series of  an integral lattice containing~$\ZZ$ can be determined from the theta series of its image under~$\tau$.  The difficulty is recovering information about~$\tau(L)$ from the theta series of~$L$: the power series $f(q),g(q)\in\ZZ[[q]]$ such that $\theta_{L}(q)=f(q)\theta_0(q)+g(q)\theta_1(q)$ are far from being unique.

	\begin{ex}\label{ex:tau_nonisom}
		Let~$B_3$ be the quaternion algebra over~$\QQ$ ramified at~$3$, defined by $i^2=-1$ and $j^2=k^2=-3$ (with $k=ij$). Consider the two integral lattices
		\begin{align*}
			L_1\coloneqq \left\langle 1,i,\frac{1+j}{2},k\right\rangle,\qquad
			L_2\coloneqq \left\langle 1,i,\frac{i+j}{2},k\right\rangle.
		\end{align*}
		These lattices are isometric (via swapping~$1$ with~$i$) and therefore have the same theta function. However, their images under~$\tau$,
		\begin{align*}
			\tau(L_1)= \left\langle 2i,j,2k\right\rangle,\qquad
			\tau(L_2)= \left\langle 2i,i+j,2k\right\rangle,
		\end{align*}
		are not isometric; they have different successive minima ($3,4,12$ and $4,4,12$ respectively) and different theta functions. This demonstrates that the decomposition of a theta series as in \cref{lem:thetadecomp} is not unique, and that we can not in general determine the lattice structure of~$\tau(L)$ from the lattice structure of~$L$ alone.
	\end{ex}

	From now on, we suppose~$\mathcal{O}$ is a maximal order. Our goal in the remainder of the paper is to use the geometry of~$\mathcal{O}$  to obtain constraints on the terms appearing in \cref{lem:thetadecomp}, and so deduce the successive minima of~$\mathcal{O}^T$.  The strategy is to start with $L=\ZZ$ and inductively build up a lattice $\ZZ\subseteq L\subseteq\mathcal{O}$ with known structure, one dimension at a time. If~$c_nq^n$ is the smallest nonzero term of $\theta_\mathcal{O}(q)-\theta_L(q)$, we can conclude that there are~$c_n$ elements of norm~$n$ in $\mathcal{O}\setminus L$, and no shorter elements. We then use general properties of quaternion orders to show that the traces of these elements can be determined; this allows us to determine the minimal polynomial of an element $\alpha\in\mathcal{O}$ of norm~$n$ whose image under~$\tau$ attains the next successive minimum of~$\mathcal{O}^T$. Finally, we can use \cref{lem:uniquetr} to determine the full lattice structure of $L+\langle\alpha\rangle$.
	
	\subsection{Determining~$D_1$ and~$D_2$}\label{sec:D1D2}
	
	As above, let $\mathcal{O}\subseteq B_p$ be a maximal order, and let $D_1,D_2,D_3$ denote the successive minima of~$\mathcal{O}^T$. If $p=2,3,5,7$, then there is a unique maximal order in~$B_p$ (for instance by \cite[Exercise 30.6]{voight}), so the isomorphism type of~$\mathcal{O}$ (and in particular the successive minima of~$\mathcal{O}^T$) are uniquely determined. Thus from now on we can assume $p\geq 11$.

	
	\begin{lem}\label{lem:determine_D1}
		Let~$c_nq^n$ denote the first nonzero term of $\theta_\mathcal{O}(q)-\theta_\ZZ(q)$. Then one of the following occurs:
		\begin{itemize}
			\item $c_n=2$, in which case $D_1=4n$ and  $D_2,D_3\geq 4n+3$.
			\item $c_n=4$, in which case $D_1=4n-1$ and $D_2,D_3\geq 4n+3$.
			\item $c_n=6$, in which case $D_1=4n-1$, $D_2=4n$, and $D_3\geq 4n+3$.
		\end{itemize}
	\end{lem}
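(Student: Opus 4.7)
The plan is to apply the decomposition of \cref{lem:thetadecomp} with $L = \mathcal{O}$, using $\theta_\ZZ(q) = \theta_0(q)$, to obtain
\[
\theta_\mathcal{O}(q) - \theta_\ZZ(q) = \left(\sum_{\substack{\beta \in \mathcal{O}^T \setminus \{0\} \\ 4 \mid N(\beta)}} q^{N(\beta)/4}\right)\theta_0(q) + \left(\sum_{\substack{\beta \in \mathcal{O}^T \\ N(\beta) \equiv 3\;\mathrm{mod}\;4}} q^{(N(\beta)+1)/4}\right)\theta_1(q).
\]
Since $\theta_0(q) = 1 + 2q + O(q^4)$, $\theta_1(q) = 2 + 2q^2 + O(q^6)$, and norms on $\mathcal{O}^T$ are $\equiv 0$ or $\equiv 3 \pmod 4$ by \cref{eq:normtau}, a pair $\{\beta, -\beta\} \subseteq \mathcal{O}^T$ with $N(\beta) = 4n$ contributes exactly $2$ to the coefficient of $q^n$, a pair with $N(\beta) = 4n-1$ contributes exactly $4$, and every other pair contributes only to strictly higher exponents. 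Hence $c_n = 2a + 4b$, where $a$ and $b$ count the pairs in $\mathcal{O}^T$ of norm $4n$ and $4n - 1$ respectively; in particular, $D_1 \in \{4n - 1, 4n\}$.

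The central step is to show $a, b \in \{0,1\}$. I would first dispose of $p \in \{2,3,5,7\}$, for which $B_p$ contains a unique maximal order up to isomorphism (see \cite[Exercise 30.6]{voight}), so the statement is a finite check. For $p \geq 11$, \cref{lem:succminbounds} gives $D_1 \leq 2p^{2/3} < p$, and since $p$ is odd we even have $4n \leq D_1 + 1 < p$. Suppose $\beta_1, \beta_2 \in \mathcal{O}^T$ are linearly independent with $N(\beta_1) = N(\beta_2) = D < p$. Under \cref{lem:embeddingsnorms} they correspond to embeddings of the imaginary quadratic order of discriminant $-D$ with distinct images: indeed if those images coincided, then $\beta_2 \in \ZZ + \ZZ\beta_1$, and combining with $\Tr(\beta_2) = 0$ and $N(\beta_2) = N(\beta_1)$ one forces $\beta_2 = \pm \beta_1$, contradicting independence. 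But \cref{lem:distinct_embeddings} then requires $D > p$, a contradiction. So $a, b \leq 1$.

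Combining these observations, $(a,b) \in \{(1,0),(0,1),(1,1)\}$ gives $c_n \in \{2,4,6\}$, and $D_1$ is determined in each case as stated in the lemma. For the bound $D_2 \geq 4n + 3$ when $c_n \in \{2,4\}$: the uniqueness result shows that no independent $\beta' \in \mathcal{O}^T$ has norm equal to $D_1$, and no element of $\mathcal{O}^T$ has norm in $\{4n+1, 4n+2\}$ by the mod $4$ constraint, so the next available norm is $4n+3$. In the case $c_n = 6$, we already have $\beta_1, \beta_2$ independent with $N(\beta_i) \in \{4n-1, 4n\}$; any third independent element of norm $\leq 4n$ would force $a \geq 2$ or $b \geq 2$, so $D_3 \geq 4n + 3$. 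The main obstacle is really the careful parity bookkeeping of $D_1 \bmod 4$ together with the fine contributions of $\theta_0, \theta_1$, rather than any single deep ingredient beyond Kaneko's constraint.
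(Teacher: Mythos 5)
Your proposal is correct and follows essentially the same route as the paper: decompose $\theta_\mathcal{O}-\theta_\ZZ$ via \cref{lem:thetadecomp}, use \cref{lem:succminbounds} to get $4n\leq p$ for $p\geq 11$, and invoke \cref{lem:distinct_embeddings} to rule out two independent Gross-lattice elements of the same norm, leaving only the three cases $c_n\in\{2,4,6\}$. You even fill in a small detail the paper leaves implicit, namely that two independent elements of $\mathcal{O}^T$ of equal norm must correspond to embeddings with distinct images.
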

	\begin{proof}
		By \cref{lem:thetadecomp}, the term~$c_nq^n$ has contributions from $\beta\in\mathcal{O}^T\setminus\{0\}$ with norm~$4n$ or~$4n-1$.  Since~$n$ is minimal these elements are primitive, so  by \cref{lem:embeddingsnorms} they correspond to optimal embeddings of quadratic orders of discriminant~$4n$ and~$4n-1$, respectively. We have $4n-1\leq D_1\leq  2p^{2/3}$ by \cref{lem:succminbounds}, which implies $4n\leq p$ since  $p\geq 11$.  So by \cref{lem:distinct_embeddings}, there cannot exist $\alpha,\alpha'\in\mathcal{O}$ both of norm~$n$ but generating distinct isomorphic subfields of~$\mathcal{O}$. Hence there are only three options: only $\ZZ[\sqrt{-n}]$ optimally embeds in~$\mathcal{O}$, only $\ZZ[\frac{1+\sqrt{1-4n}}{2}]$ optimally embeds, or both optimally embed. These three cases can each be identified by counting the number of norm~$n$ elements in each quadratic order.
	\end{proof}
	
	If $D_1<15$, then the isomorphism type of~$\mathcal{O}$ is uniquely determined by \cref{lem:smallcases}; in particular the remaining successive minima~$D_2$ and~$D_3$ of~$\mathcal{O}^T$ are also determined. So from now on we assume $D_1\geq 15$.
	Using \cref{lem:determine_D1}, we use~$\theta_\mathcal{O}$ to deduce the existence of an element $\alpha_1\in\mathcal{O}$ with norm~$n$ and trace either~$0$ or~$1$, depending on the parity of~$D_1$; hence we can determine the structure of $\ZZ[\alpha_1]$.
	
	\begin{lem}\label{lem:determine_D2}
		Suppose $D_1\geq 15$, and let $c_nq^n$ denote the first nonzero term of $\theta_{\mathcal{O}}(q)-\theta_{\ZZ[\alpha_1]}(q)$. Then one of the following occurs:
		\begin{itemize}
			\item $c_n=2$, in which case $D_2=4n$ and $D_3\geq 4n+3$.
			\item $c_n=4$, in which case $D_2=4n-1$ and $D_3\geq 4n+3$.
			\item $c_n=6$, in which case $D_2=4n-1$ and $D_3=4n$.
		\end{itemize}
	\end{lem}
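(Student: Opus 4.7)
The plan is to mirror the proof of \cref{lem:determine_D1}, with $\ZZ[\alpha_1]$ replacing $\ZZ$ and the one-dimensional lattice $\tau(\ZZ[\alpha_1]) = \ZZ\tau(\alpha_1)$ (which follows from the $\ZZ$-linearity of $\tau$ on $\ZZ[\alpha_1]$, since $\tau(a+b\alpha_1) = b\tau(\alpha_1)$) replacing $\tau(\ZZ) = \{0\}$. First, I would apply \cref{lem:thetadecomp} to both $\mathcal{O}$ and $\ZZ[\alpha_1]$ and subtract, obtaining
\[
\theta_\mathcal{O}(q) - \theta_{\ZZ[\alpha_1]}(q) = \sum_{\beta \in \mathcal{O}^T \setminus \ZZ\tau(\alpha_1)} \mathrm{contrib}(\beta),
\]
where each $\beta$ of norm $4n$ contributes $q^n + 2q^{n+1} + \cdots$ and each of norm $4n-1$ contributes $2q^n + 2q^{n+2}+\cdots$. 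Hence the lowest order coefficient is $c_n = k + 2k'$, where $k$ and $k'$ count the elements of norm $4n$ and $4n-1$ respectively in $\mathcal{O}^T \setminus \ZZ\tau(\alpha_1)$. Since any such element is linearly independent from $\tau(\alpha_1)$, the minimality of $n$ gives $D_2 = 4n-1$ if $k' > 0$ and $D_2 = 4n$ otherwise.

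Next, I would apply the numerical constraints. For a maximal order $\disc\mathcal{O} = p^2$, so by \cref{lem:succminbounds} we have $D_2 \leq \sqrt{8p^2/D_1} < p$ using $D_1 \geq 15$, hence $4n \leq D_2+1 \leq p$. \cref{lem:distinct_embeddings} then forces at most one $\pm$-pair of embeddings per discriminant, so $k, k' \in \{0,2\}$. The three nontrivial cases $(k,k') \in \{(2,0), (0,2), (2,2)\}$ give $c_n \in \{2,4,6\}$ with $D_2 \in \{4n, 4n-1, 4n-1\}$ respectively, matching the statement.

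For the $D_3$ bounds, in the cases $c_n \in \{2,4\}$, all norm-$\leq 4n$ elements of $\mathcal{O}^T \setminus \ZZ\tau(\alpha_1)$ lie in the plane spanned with $\tau(\alpha_1)$ by the single new $\pm$-pair; since norms in $\mathcal{O}^T$ are $\equiv 0$ or $3 \pmod 4$ by \cref{eq:normtau}, the next admissible norm outside this plane is $4n+3$, yielding $D_3 \geq 4n+3$. In the case $c_n = 6$, writing $\pm\beta_2$ for the norm-$(4n-1)$ pair and $\pm\gamma$ for the norm-$4n$ pair, the claim $D_3 = 4n$ requires showing $\gamma$ is linearly independent from $\tau(\alpha_1)$ and $\beta_2$: this upper bound combined with $D_3 \geq 4n$ (since two independent elements of norm $4n-1$ would violate \cref{lem:distinct_embeddings}) then forces $D_3 = 4n$.

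The hard part is establishing this independence in the $c_n = 6$ case. Assuming for contradiction that $\gamma \in V := \QQ\tau(\alpha_1) + \QQ\beta_2$, \cref{lem:succminbasis} applied to the rank-$2$ lattice $V \cap \mathcal{O}^T$ (whose successive minima $D_1, D_2$ are attained by $\tau(\alpha_1), \beta_2$) allows us to write $\gamma = a\tau(\alpha_1) + b\beta_2$ with $a, b \in \ZZ$ and $b \neq 0$. Setting $t = \tfrac12\Tr(\tau(\alpha_1)\bar\beta_2)$, the norm identity becomes $a^2 D_1 + 2abt + (b^2-1)D_2 = 1$. Combining this with the congruence $D_1 D_2 \equiv t^2 \pmod{4p}$ from \cref{prop:kaneko} (and the normalization $|t| \leq p/2$ from \cref{lem:uniquetr}) together with the bound $D_2 \leq \sqrt{8p^2/D_1}$ from \cref{lem:succminbounds}, the resulting Diophantine system should have no integer solutions under the standing hypothesis $D_1 \geq 15$. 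A case split on $|b|$ (with $|b| = 1$ forcing $|t| = (D_1-1)/2$ and producing incompatibility with the $4p$-divisibility constraint $4D_1 D_2 - (D_1-1)^2 \equiv 0 \pmod{16p}$, and $|b| \geq 2$ ruled out by size considerations) is the expected route, and is the principal technical obstacle in the proof.
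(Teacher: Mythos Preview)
Your overall strategy is exactly the paper's: apply \cref{lem:thetadecomp} to $\mathcal{O}$ and $\ZZ[\alpha_1]$, use $D_2\le p\sqrt{8/D_1}$ from \cref{lem:succminbounds} together with $D_1\ge 15$ to force $4n\le p$, and then invoke \cref{lem:distinct_embeddings} so that each of the discriminants $-(4n-1)$ and $-4n$ contributes at most one $\pm$-pair. The paper's proof is literally two sentences (``use the same argument as in \cref{lem:determine_D1}, except\ldots''), and everything you write up to and including the determination of $D_2$ and the bounds $D_3\ge 4n+3$ in the $c_n\in\{2,4\}$ cases is a correct fleshing-out of that.

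Where you diverge from the paper is the $c_n=6$ case. You are right that the claim $D_3=4n$ requires the norm-$4n$ element $\gamma$ to be linearly independent of $\beta_1,\beta_2$, and that this is \emph{not} the same as the analogous step in \cref{lem:determine_D1} (where only two primitive vectors of distinct norms are involved, so independence is automatic). The paper does not address this point at all; it is swept into ``same argument as before.'' So you have put your finger on a genuine subtlety.

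However, your proposed resolution does not work. In the $|b|=1$ subcase you derive $|t|=(D_1-1)/2$ and claim this is incompatible with the Kaneko congruence $D_1D_2-t^2\equiv 0\pmod{4p}$. It is not: take $p=59$, $D_1=15$, $D_2=19$, $t=7$; then $D_1D_2-t^2=285-49=236=4\cdot 59$, and your displayed condition $4D_1D_2-(D_1-1)^2=944=16\cdot 59$ is also satisfied. So no contradiction emerges from the congruences alone, and with $\gamma=\beta_2-\beta_1$ one gets $N(\gamma)=15+19-14=20=4n$ lying in $\langle\beta_1,\beta_2\rangle$. Your ``$|b|\ge 2$ by size'' sketch is likewise not a proof: minimising the form over $a$ for fixed $b$ gives $N(\gamma)\ge b^2(D_1D_2-t^2)/D_1\ge 16p/D_1$, and one then needs $D_2+1<16p/D_1$, which is not automatic from $D_1\ge 15$ for all $D_1$ in the relevant range.

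In short: your proof is the paper's proof plus an honest attempt at a step the paper skips, but that attempt is incomplete and the specific incompatibility you assert is false. Note that for the purposes of the main theorem only the value of $D_2$ is extracted from this lemma (the determination of $D_3$ is redone from scratch in \cref{sec:determining_D3}), and $D_2=4n-1$ when $c_n=6$ is unproblematic; so the gap, while real, does not propagate.
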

	\begin{proof}
		The term~$c_nq^n$ has contributions from $\beta\in\mathcal{O}^T\setminus \tau(\ZZ[\alpha_1])$ with norm~$4n$ or~$4n-1$.  We can use the same argument as in \cref{lem:determine_D1}, except that here we use the bound  $4n-1\leq D_2\leq p\sqrt{8/D_1}$ from \cref{lem:succminbounds}; since $D_1\geq 15$ and $p\geq 11$  we can conclude $4n\leq p$ as before.
	\end{proof}

	Using our information about~$D_1$ and~$D_2$ we can determine the structure of a particular rank~$3$ sublattice of~$\mathcal{O}$. (The exact form of the Gram matrix is not important to the proof; we only require the fact that it can be determined knowing only~$D_1$,~$D_2$, and~$p$.)
	
	\begin{lem}\label{lem:rank3sublat}
		Let $\delta_i\in\{0,1\}$ satisfy $\delta_i\equiv D_i\pmod 2$ for $i=1,2$, and let~$T$ be the unique integer satisfying $0\leq T\leq \frac{p-1}{2}$ and $T^2\equiv D_1D_2\pmod p$. There exist $\alpha_1,\alpha_2\in\mathcal{O}$ such that $\tau(\alpha_1),\tau(\alpha_2)$ attain the first two successive minima for~$\mathcal{O}^T$, and the Gram matrix for $L\coloneqq \langle 1,\alpha_1,\alpha_2\rangle$ is 
		\[\begin{pmatrix}
			1 & \frac12\delta_1 & \frac12\delta_2\\
			\frac12\delta_1 & \frac14(D_1+\delta_1) & \frac14(T+\delta_1\delta_2) \\
			\frac12\delta_2 & \frac14(T+\delta_1\delta_2) &\frac14(D_2+\delta_2)
		\end{pmatrix}.\]
	\end{lem}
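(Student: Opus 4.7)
The plan is to start with any $\beta_1,\beta_2\in\mathcal{O}^T$ attaining the first two successive minima and let $\alpha_1,\alpha_2\in\mathcal{O}$ be the canonical lifts: the unique elements with $\tau(\alpha_i)=\beta_i$ and $\Tr(\alpha_i)\in\{0,1\}$. Using $N(\tau(\alpha_i))=4N(\alpha_i)-\Tr(\alpha_i)^2=D_i$ from \cref{eq:normtau}, the parity of $\Tr(\alpha_i)^2$ must match that of $D_i$, forcing $\Tr(\alpha_i)=\delta_i$ and $N(\alpha_i)=\tfrac14(D_i+\delta_i)$. The $\ZZ$-linearity of $\tau$ together with $\ker(\tau|_\mathcal{O})=\ZZ$ promotes the independence of $\beta_1,\beta_2$ to linear independence of $1,\alpha_1,\alpha_2$, so the Gram matrix of $L=\langle 1,\alpha_1,\alpha_2\rangle$ is well defined.

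The diagonal entries and the first row/column of the Gram matrix are then immediate from $\Tr(\alpha_i)=\delta_i$ and $N(\alpha_i)=\tfrac14(D_i+\delta_i)$. The only nontrivial entry is the $(2,3)$ entry. Expanding $\bar\alpha_2=\delta_2-\alpha_2$ gives $\Tr(\alpha_1\bar\alpha_2)=\delta_1\delta_2-\Tr(\alpha_1\alpha_2)$, and \cref{eq:eventrace} rewrites $\Tr(\alpha_1\alpha_2)$ in terms of $\Tr(\beta_1\beta_2)=-\Tr(\beta_1\bar\beta_2)$. A short computation yields
\[
\tfrac12\Tr(\alpha_1\bar\alpha_2)=\tfrac{2\delta_1\delta_2+\Tr(\beta_1\bar\beta_2)}{8},
\]
so the desired Gram matrix entry $\tfrac14(T+\delta_1\delta_2)$ is equivalent to $\tfrac12\Tr(\beta_1\bar\beta_2)=T$.

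This is precisely where \cref{lem:uniquetr} comes into play, and this is the main conceptual step of the argument. We need to verify the hypotheses: first, $\mathcal{O}$ maximal forces $\Delta=p^2$, so \cref{lem:succminbounds} gives $D_1\leq 2p^{2/3}\leq p$ for $p\geq 11$; second, $\beta_2$ is automatically of minimal norm in the coset $\beta_2+\ZZ\beta_1$, since any shorter representative would be linearly independent from $\beta_1$ and contradict $\beta_2$ attaining the second successive minimum $D_2$. Hence \cref{lem:uniquetr} applies and gives $|\tfrac12\Tr(\beta_1\bar\beta_2)|=T$. Replacing $\beta_2$ by $-\beta_2$ if necessary — which corresponds to replacing $\alpha_2$ with $\delta_2-\alpha_2$, preserving both its norm and its trace-in-$\{0,1\}$ property — secures the correct sign and completes the proof.

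The only mildly delicate point is checking that the parity constraint $T\equiv\delta_1\delta_2\pmod 2$ holds, since otherwise $\Tr(\alpha_1\alpha_2)$ would fail to be an integer; but this is a consequence of the identity above together with $\Tr(\alpha_1\alpha_2)\in\ZZ$, so it is automatic rather than an additional hypothesis. Apart from that, everything reduces to collecting the bounds already established and applying \cref{lem:uniquetr}.
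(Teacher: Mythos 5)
Your proof is correct and follows essentially the same route as the paper's: lift $\beta_1,\beta_2$ to $\alpha_1,\alpha_2$ with $\Tr(\alpha_i)=\delta_i$, use \cref{eq:eventrace} to convert the off-diagonal Gram entry into $\tfrac12\Tr(\beta_1\bar\beta_2)$, pin it down via \cref{lem:uniquetr}, and fix the sign by conjugating $\alpha_2$. You are in fact slightly more careful than the paper in explicitly verifying the coset-minimality hypothesis of \cref{lem:uniquetr} and the parity consistency of $T+\delta_1\delta_2$, both of which the paper leaves implicit.
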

	\begin{proof}
		Let $\alpha_1,\alpha_2$ be such that 
		\[N(\tau(\alpha_i))=4N(\alpha_i)-\Tr(\alpha_i)^2=D_i\]
		for $i=1,2$. Adding an integer to~$\alpha_i$ if necessary, we may assume $\Tr(\alpha_i)=\delta_i$, so $N(\alpha_i)=\frac14(D_i+\delta_i)$. Replacing~$\alpha_2$ with~$\bar{\alpha_2}$ if necessary we can further assume that $\Tr(\tau(\alpha_1)\overline{\tau(\alpha_2)})\geq 0$.
		We have  $D_1\leq 2p^{2/3}\leq p$, so by \cref{lem:uniquetr} we have
		\[T=\frac12\Tr(\tau(\alpha_1)\overline{\tau(\alpha_2)})=2\Tr(\alpha_1\bar{\alpha_2})-\delta_1\delta_2.\]
		We can then solve for $\Tr(\alpha_1\bar{\alpha_2})= \frac12(T+\delta_1\delta_2)$, and this determines the Gram matrix for the basis $1,\alpha_1,\alpha_2$.
	\end{proof}

	\subsection{Determining~$D_3$ from~$D_1$ and~$D_2$}\label{sec:determining_D3}
	
	As above, we assume $D_1\geq 15$ (since \cref{lem:smallcases} applies when $D_1<15$).
	We can identify the fourth successive minimum of~$\mathcal{O}$ as the index of the smallest nonzero term of $\theta_{\mathcal{O}}-\theta_L$, where $L=\langle 1,\alpha_1,\alpha_2\rangle$ as in \cref{lem:rank3sublat}. Recall that in \cref{lem:determine_D1} (resp. \cref{lem:determine_D2}), we showed that if~$c_nq^n$ is the first nonzero term of $\theta_{\mathcal{O}}-\theta_\ZZ$ (resp. $\theta_{\mathcal{O}}-\theta_{\ZZ[\alpha_1]}$), then there are at most two elements with norm~$n$ in $\mathcal{O}\setminus\ZZ$ (resp. in $\mathcal{O}\setminus \ZZ[\alpha]$) up to negation and conjugation. Using this constraint we could determine the traces of these elements, and hence the first (resp. second) successive minimum of~$\mathcal{O}^T$.
	
	Unfortunately,~$n$ can be much larger than in previous cases, so \cref{lem:distinct_embeddings} may no longer be useful;  it is possible for there to exist two elements of~$\mathcal{O}$ of the same norm~$n$ generating distinct but isomorphic subfields. Thus the first nonzero coefficient may not be sufficient to determine~$D_3$. However, we will show that the first two coefficients of $\theta_{\mathcal{O}}-\theta_L$ are sufficient. 
	
	
	We begin with two plane geometry lemmas.

	\begin{lem}\label{lem:smallnorm}
		Let $\Lambda\subseteq\RR^2$ be a rank $2$ lattice with positive-definite quadratic form $Q$,  and $v\in\RR^2$. \black Let~$c$ denote the minimum of $Q(v-w)$ for $w\in\Lambda$, and~$\lambda$ the minimum of~$Q(w)$ for $w\in\Lambda\setminus\{0\}$. Then there exists a set of four points $P\subseteq\Lambda$, forming the vertices of a translated fundamental parallelogram of~$\Lambda$, such that for all $w\in\Lambda\setminus P$ we have $Q(v-w)\geq c+\lambda$.
	\end{lem}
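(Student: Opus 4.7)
The plan is to translate the lattice so that a closest point $w_0 \in \Lambda$ to $v$ becomes the origin, and then classify the ``bad'' set $S \coloneqq \{u \in \Lambda : Q(v-u) < c+\lambda\}$ (which automatically contains $0$). Setting $E(u) \coloneqq Q(v-u) - c = Q(u) - 2\langle v, u\rangle$, the fact that $0$ is closest to $v$ is the Voronoi condition $E(u) \geq 0$ for all $u \in \Lambda$, while the bad condition is $E(u) < \lambda$. The goal is then to exhibit a basis $e_1, e_2$ of $\Lambda$ such that $S \subseteq \{0, e_1, e_2, e_1+e_2\}$.

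Two preliminary observations cut the problem down substantially. First, each nonzero $u \in S$ lies in the open half-plane $H \coloneqq \{x : \langle v, x\rangle > 0\}$, because $Q(u) \geq \lambda$ combined with $E(u) < \lambda$ forces $\langle v, u\rangle > 0$. Second, each nonzero $u \in S$ is primitive: for $u = k u_0$ with $u_0$ primitive and $|k|\geq 2$, the Voronoi bound $|\langle v, u_0\rangle|\leq Q(u_0)/2$ applied to $u_0$ yields $E(k u_0) \geq |k|(|k|-1)Q(u_0) \geq 2\lambda$. In particular, if $|S \setminus \{0\}| \leq 1$ the lemma is immediate by taking any basis extending the bad vector (if any).

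The central step is to choose two linearly independent bad vectors $u_1, u_2$ minimizing $|\det(u_1, u_2)|$ and show they form a $\ZZ$-basis of $\Lambda$. If instead they only spanned a proper sublattice, the fundamental parallelogram of $\ZZ u_1 + \ZZ u_2$ would contain an extra lattice point $w$; primitivity of $u_1, u_2$ rules out points on the open edges, and convexity of the open bad ball applied to the triangle $\{0, u_1, u_2\}$ (whose vertices are all bad) forces either $w$ itself or its central reflection $u_1 + u_2 - w$ to be a bad lattice point lying in the lower triangle. Pairing this new bad vector with whichever of $u_1, u_2$ it remains independent from yields a pair with strictly smaller $|\det|$, contradicting the minimality.

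Finally, I would show that any further bad vector $u_3 = m u_1 + n u_2$ (automatically primitive and in $H$) must satisfy $|m|, |n| \leq 1$; combined with $u_3 \in H$, this leaves only the three candidates $u_1+u_2,\ u_1-u_2,\ u_2-u_1$. I expect this last step to be the main obstacle: ruling out $\max(|m|,|n|)\geq 2$ uses the Voronoi inequality applied to an intermediate lattice point such as $u_1 + u_2$, which bounds $\langle v, u_3\rangle$ from above in a way that is incompatible with $u_3$ being bad once $Q(u_3)$ is too large. Once the candidates are narrowed to the three above, the identity $E(u_1+u_2) + E(u_1-u_2) = 2E(u_1) + 2Q(u_2) \geq 2\lambda$ (together with its analog swapping $u_1,u_2$) shows at most one of the three candidates is actually bad, and in each such case a direct check confirms that $\{0, u_1, u_2, u_3\}$ is the vertex set of a translated fundamental parallelogram of $\Lambda$.
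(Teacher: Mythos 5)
Your architecture is sound and most of the intermediate steps check out: the half-plane and primitivity observations, the descent argument showing that a minimal-determinant pair of bad vectors is a basis, the identity $E(u_1+u_2)+E(u_1-u_2)=2E(u_1)+2Q(u_2)\geq 2\lambda$ showing at most one of the three candidates survives, and the verification that each resulting quadruple is a translated fundamental parallelogram are all correct. The genuine gap is exactly the step you flag yourself: showing that a further bad vector $u_3=mu_1+nu_2$ must have $|m|,|n|\leq 1$. Your sketch (Voronoi at an intermediate point bounds $v\cdot u_3$, incompatible with badness once $Q(u_3)$ is large) does not go through as stated, because $Q(u_3)$ need not be large --- it can equal $\lambda$ --- and the configurations you must exclude pass every soft test you have assembled. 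Concretely, take $u_3=3u_1-u_2$: it is primitive, lies in your half-plane $H$, is incongruent to each of $0,u_1,u_2$ modulo $2\Lambda$ (so no parallelogram-law pairing with a \emph{lattice} midpoint is available), forms a pair of minimal determinant with $u_1$, and the convex hull of $\{0,u_1,u_2,3u_1-u_2\}$ contains no further lattice point (by Pick's theorem it is a triangle of area $\tfrac32$ with $u_1$ as its unique interior point), so convexity of the bad disk yields nothing new. Excluding it requires a genuinely quantitative argument: for instance, pair $u_3$ with the bad point $u_2$ via $E(u_3)+E(u_2)=2E(\tfrac32 u_1)+\tfrac12 Q(3u_1-2u_2)$, note that the Voronoi inequality $2(v\cdot u_1)\leq Q(u_1)$ gives $E(\tfrac32 u_1)=\tfrac94 Q(u_1)-3(v\cdot u_1)\geq\tfrac34 Q(u_1)\geq \tfrac34\lambda$, and $Q(3u_1-2u_2)\geq\lambda$, so the right-hand side is at least $2\lambda$ while the left-hand side is less than $2\lambda$. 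You would then have to run a uniform version of this half-lattice-midpoint argument over all odd pairs $(m,n)$ with $\max(|m|,|n|)\geq 3$, and that is where the real work of the lemma lies; as submitted, the proof is incomplete at its crux.

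For comparison, the paper's proof sidesteps this case analysis entirely by a more geometric route: it takes a Lagrange-reduced basis (so the triangle with vertices $0,u_1,u_2$ is non-obtuse), tiles the plane by congruent copies of that triangle, locates $v$ in one of them, subdivides it by the orthocenter to decide which pair of vertices $v$ is close to, and then for every $w\in\Lambda$ exhibits a vertex $w_0$ of the parallelogram with $(v-w_0)\cdot(w-w_0)\leq 0$, whence $Q(v-w)\geq Q(v-w_0)+Q(w-w_0)\geq c+\lambda$ at one stroke. If you want to keep your Voronoi-cell framework, importing the reduction step (working with a reduced basis rather than an arbitrary minimal-determinant bad pair) would make the remaining coefficient bound far easier to establish.
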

	
	We call a set~$P$ satisfying the conclusion of \cref{lem:smallnorm} a \emph{separating set} for~$v$, since we can use it to ensure that no other element of $\Lambda$ is too close to $v$. Note that elements $w\in P$ may themselves satisfy $Q(v-w)\geq c+\lambda$, and a separating set for~$v$ in~$\Lambda$ is not necessarily unique.

	\black
	
	\begin{proof}
		
		Let $\cdot$ denote the bilinear form associated to $Q$. 
		By standard lattice basis reduction arguments, there exists a basis $u_1',u_2'$ for~$\Lambda$ such that the triangle with vertices $0,u_1',u_2'$ is right or acute, in the sense that $u_1'\cdot u_2'$, $(-u_2')\cdot (u_1'-u_2')$, and $(-u_1')\cdot (u_2'-u_1')$ are all non-negative. \black  The plane is tiled by congruent copies of this triangle with vertices lying in $\Lambda$; \black let~$\Delta$ be one such triangle containing~$v$ (allowing~$v$ to lie on the boundary of~$\Delta$). Let $r_1,r_2,r_3$ be the vertices of~$\Delta$, and~$s$ the orthocenter of~$\Delta$  (that is, $s$ satisfies $(r_i-s)\cdot (r_j-r_k)=0$ for all permutations $i,j,k$ of $1,2,3$). \black Then~$\Delta$ can be written as the union of three triangles~$\Delta_{12}$,~$\Delta_{23}$, and~$\Delta_{13}$, where~$\Delta_{ij}$ is defined as the triangle with vertices~$r_i,r_j,s$. Without loss of generality suppose $v\in \Delta_{12}$. After translation by~$-r_3$, and setting $u_1\coloneqq r_1-r_3$ and~$u_2 \coloneqq r_2-r_3$, we can assume that~$\Delta$ has vertices~$0,u_1,u_2$ and that~$v$ lies in the triangle with vertices~$s,u_1,u_2$, as in \cref{fig:triangle}. By computing the orthogonal projections of~$u_2,v,u_1$ onto the span of~$u_1$ (and similarly onto the span of~$u_2$), we obtain the relations
		\begin{align*}
			0\leq u_2\cdot u_1 \leq v\cdot u_1 \leq  u_1\cdot u_1,\\
			0\leq  u_1\cdot u_2 \leq  v\cdot u_2 \leq  u_2\cdot u_2.
		\end{align*}
		To simplify notation set 
		\[ t\coloneqq u_1\cdot u_2,\qquad s_1\coloneqq  v\cdot u_1,\qquad s_2\coloneqq  v\cdot u_2,\]
		so we have $0\leq t\leq s_i\leq Q(u_i)$ for each $i=1,2$.

		\begin{figure}[h]
			\begin{center}
				\begin{tikzpicture}
					\filldraw[gray!20] (4,0) -- (1,0.6) -- (1,5) -- (4,0); 
					\draw  (0,0) edge (1,5);
					\draw  (0,0) edge (4,0);
					\draw  (1,5) edge (4,0);
					\draw  (1,5) edge (5,5);
					\draw  (4,0) edge (5,5);
					\draw[dashed]  (1,5) edge (1,0);
					\draw[dashed]  (4,0) edge (0.153846,0.76923);
					\filldraw[black] (0,0) circle (2pt) node[anchor=north]{$0$};
					\filldraw[black] (4,0) circle (2pt) node[anchor=north]{$u_1$};
					\filldraw[black] (1,5) circle (2pt) node[anchor=south]{$u_2$};
					\filldraw[black] (5,5) circle (2pt) node[anchor=south]{$u_1+u_2$};
					\filldraw[black] (2,2) circle (2pt) node[anchor=west]{$v$};
					\filldraw[black] (1,0.6) circle (2pt) node[anchor=215]{$s$};
					\draw[dashed]  (2,2) edge (2,0);
					\draw[dashed]  (2,2) edge (0.461538,2.307692);
				\end{tikzpicture}
			\end{center}
			\caption{If $v$ lies in the highlighted gray triangle, then $P=\{0,u_1,u_2,u_1+u_2\}$ satisfies the conclusion of \cref{lem:smallnorm}.}\label{fig:triangle}
		\end{figure}
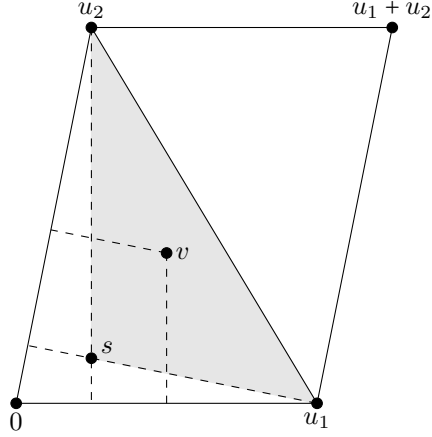
		
		Now for any lattice element $w\in\Lambda$, we exhibit $w_0\in P\coloneqq \{0,u_1,u_2,u_1+u_2\}$ such that $(v-w_0)\cdot (w-w_0)\leq 0$. Set $w=as_1+bs_2$ for some $a,b\in\ZZ$. 
		\begin{itemize}
			\item If $a,b\leq 0$ then
			$v\cdot w=as_1+bs_2 \leq 0$.
			\item If $a\geq 1$ and $b\leq 0$ then $(v-u_1)\cdot(w-u_1)=(a-1)(s_1-Q(u_1))+b(s_2-t) \leq 0$.
			\item If $a\leq 0$ and $b\geq 1$ then
			$(v-u_2)\cdot(w-u_2)=a(s_1-t)+(b-1)(s_2-Q(u_2)) \leq 0$.
			\item If $a,b\geq 1$ then
			\[(v-u_1-u_2)\cdot(w-u_1-u_2)=(a-1)(s_1-Q(u_1)-t)+(b-1)(s_2-Q(u_2)-t) \leq 0.\]
		\end{itemize}
		We therefore have
		\begin{align*}
			Q(v-w)&\geq Q(v-w_0)+Q(w-w_0).
		\end{align*}
		We have $Q(v-w_0)\geq c$, and $Q(w-w_0)\geq\lambda$ unless $w=w_0$.		
	\end{proof}

	\begin{lem}\label{lem:R2traces}
		Let $b_1,b_2\in \RR^2$ be linearly independent vectors, and $v_1,v_2\in\RR^2$ distinct vectors. Let $(x,y)\mapsto x\cdot y$ denote the bilinear form associated to a positive-definite quadratic form on $\RR^2$. If $b_1\cdot b_2\neq 0$, $v_1\cdot v_1=v_2\cdot v_2$, and $|v_1\cdot b_j|=|v_2\cdot b_j|$ for $j=1,2$, then $v_1=-v_2$.
	\end{lem}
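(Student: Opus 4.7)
The plan is to reduce to a case analysis based on the signs of $v_1 \cdot b_j$ versus $v_2 \cdot b_j$, and then use $b_1 \cdot b_2 \neq 0$ together with $v_1 \cdot v_1 = v_2 \cdot v_2$ to rule out the mixed-sign cases.

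First, since $|v_1 \cdot b_j| = |v_2 \cdot b_j|$ for each $j = 1, 2$, for each $j$ we have either $v_1 \cdot b_j = v_2 \cdot b_j$ (``matching'') or $v_1 \cdot b_j = -v_2 \cdot b_j$ (``opposite''). Since $b_1, b_2$ are linearly independent, they form a basis for $\RR^2$, so a vector $w \in \RR^2$ is determined by the two values $w \cdot b_1$ and $w \cdot b_2$. If both signs match, then $v_1 - v_2$ is orthogonal to both basis vectors, forcing $v_1 = v_2$, contradicting distinctness. If both signs are opposite, then $v_1 + v_2$ is orthogonal to both $b_1$ and $b_2$, so $v_1 = -v_2$, which is the desired conclusion.

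The heart of the argument is ruling out the remaining ``mixed'' case, where (after possibly swapping $b_1$ and $b_2$) we have $v_1 \cdot b_1 = v_2 \cdot b_1$ and $v_1 \cdot b_2 = -v_2 \cdot b_2$. In this case $v_1 - v_2$ is orthogonal to $b_1$, and $v_1 + v_2$ is orthogonal to $b_2$. Moreover,
\[
(v_1 - v_2) \cdot (v_1 + v_2) = v_1 \cdot v_1 - v_2 \cdot v_2 = 0,
\]
so $v_1 - v_2$ and $v_1 + v_2$ are orthogonal to each other. If both are nonzero (which is where we use that $v_1 \neq v_2$ and aim for a contradiction with $v_1 \neq -v_2$), then $\{v_1 - v_2, v_1 + v_2\}$ is an orthogonal basis of $\RR^2$. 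The orthogonal complement of $v_1 - v_2$ is the line spanned by $v_1 + v_2$, and likewise for $v_1 + v_2$; thus $b_1$ is a scalar multiple of $v_1 + v_2$ and $b_2$ is a scalar multiple of $v_1 - v_2$, forcing $b_1 \cdot b_2 = 0$, contradicting the hypothesis. So either $v_1 - v_2 = 0$ or $v_1 + v_2 = 0$, and the former is excluded by $v_1 \neq v_2$, yielding $v_1 = -v_2$.

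There is no real obstacle here; the only mildly delicate point is being careful that in the mixed case we genuinely do get $v_1 - v_2 \neq 0$ and $v_1 + v_2 \neq 0$ before invoking the orthogonal-basis step, which I would handle by noting that if either of these vanishes then we are already in one of the two ``pure sign'' cases handled above.
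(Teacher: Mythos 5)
Your proof is correct. The overall skeleton matches the paper's: both arguments reduce to a sign case analysis on $v_1\cdot b_j$ versus $v_2\cdot b_j$, dispose of the two ``pure'' cases immediately, and then rule out the mixed case by contradicting $b_1\cdot b_2\neq 0$. The mechanism in the mixed case is genuinely different, though. The paper assumes $v_1\cdot b_1=v_2\cdot b_1\neq 0$ and $v_1\cdot b_2=-v_2\cdot b_2$, Gram--Schmidt orthogonalizes via $b_2^*=b_2-\frac{b_2\cdot b_1}{b_1\cdot b_1}b_1$, expands $v_i$ in the orthogonal basis $\{b_1,b_2^*\}$, uses the equal norms to force $v_1\cdot b_2^*=-v_2\cdot b_2^*$, and extracts $b_1\cdot b_2=0$ by a direct algebraic expansion. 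You instead observe that $(v_1-v_2)\cdot(v_1+v_2)=v_1\cdot v_1-v_2\cdot v_2=0$, so in the mixed case $v_1-v_2$ and $v_1+v_2$ (if both nonzero) form an orthogonal basis of $\RR^2$ whose one-dimensional orthogonal complements pin $b_1$ and $b_2$ to be proportional to $v_1+v_2$ and $v_1-v_2$ respectively, whence $b_1\cdot b_2=0$. Your version is arguably cleaner and more geometric --- it makes transparent exactly where the equal-norm hypothesis and the two-dimensionality enter --- while the paper's computation is more mechanical but avoids any discussion of which of $v_1\pm v_2$ might vanish. Your handling of that degenerate point (if $v_1-v_2=0$ or $v_1+v_2=0$ you are back in a pure case) is exactly right, and the fact that a zero pairing $v_1\cdot b_j=v_2\cdot b_j=0$ can be classified as either sign does not affect exhaustiveness of the case analysis.
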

	See \cref{fig:R2traces} for an intuitive explanation of this result.
	\black
	\begin{figure}[h]
		\begin{center}
			\begin{tikzpicture}
				
				\draw  (-.3,-1.5) edge (.5,2.5);
				\draw  (-1.5,0) edge (2,0);
				\filldraw[black] (0,0) circle (2pt) node[anchor=110]{$0$};
				\filldraw[black] (2,0) circle (2pt) node[anchor=north]{$b_1$};
				\filldraw[black] (.5,2.5) circle (2pt) node[anchor=south]{$b_2$};
				\filldraw[black] (1,1) circle (2pt) node[anchor=west]{$v$};
				\draw[dashed]  (1,1) edge (-1,1.4);
				\draw[dashed]  (-1,1.4) edge (-1,-1);
				\draw[dashed]  (-1,-1) edge (1,-1.4);
				\draw[dashed]  (1,1) edge (1,-1.4);
				\draw[fill=white] (-1,1.4) circle (3pt);
				\draw[fill=white] (-1,-1) circle (3pt);
				\draw[fill=white] (1,-1.4) circle (3pt);
			\end{tikzpicture} 
		\end{center}
		\caption{The white circles indicate the points $w\in\RR^2$ with $w\neq v$ and $|w\cdot b_j|=|v\cdot b_j|$ for $j=1,2$. If $b_1$ and $b_2$ are not orthogonal, then the only such point with the same norm as $v$ is $-v$.}\label{fig:R2traces}
	\end{figure}
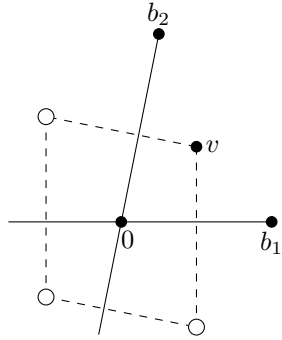
	
	\begin{proof}
		Since $b_1,b_2$ form a basis for $\RR^2$, it suffices to show that $v_1\cdot b_j=-v_2\cdot b_j$ for each $j=1,2$. For the sake of contradiction, suppose $v_1\cdot b_1=v_2\cdot b_1\neq 0$. Since $v_1$ and $v_2$ are distinct, we must then have $v_1\cdot b_2\neq v_2\cdot b_2$, so $v_1\cdot b_2=-v_2\cdot b_2$. Set $b_2^*=b_2-\frac{b_2\cdot b_1}{b_1\cdot b_1}b_1$, so that for $i=1,2$ we can write
		\[v_i=\frac{v_i\cdot b_1}{b_1\cdot b_1}b_1+\frac{v_i\cdot b_2^*}{b_2^*\cdot b_2^*}b_2^*\]
		with $b_1,b_2^*$ orthogonal. Since $v_1\cdot v_1=v_2\cdot v_2$ but $v_1\neq v_2$, we can conclude that $v_1\cdot b_2^*=-v_2\cdot b_2^*$, or expanding,
		\[v_1\cdot b_2-\frac{b_2\cdot b_1}{b_1\cdot b_1}(v_1\cdot b_1)=-v_2\cdot b_2+\frac{b_2\cdot b_1}{b_1\cdot b_1}(v_2\cdot b_1).\]
		Since $v_1\cdot b_2=-v_2\cdot b_2$ and $v_1\cdot b_1=v_2\cdot b_1\neq 0$, we conclude $b_2\cdot b_1=0$, a contradiction. Hence $v_1\cdot b_1=-v_2\cdot b_1$, and similarly $v_1\cdot b_2=-v_2\cdot b_2$, so that $v_1=-v_2$.
	\end{proof}
	
	\black
	
	We now return to the quaternion setting; we continue to assume $D_1\geq 15$ and $p\geq 11$ (though all we will use from now on is $D_1>5$ and $p$ odd). Note that from $L=\langle 1,\alpha_1,\alpha_2\rangle$ we obtain a rank~$2$ lattice 
	\[\tau(L)=\langle\beta_1,\beta_2\rangle;\]
	under the  isomorphism between $B_p\otimes\RR$ and~$\RR^4$, we obtain a lattice in~$\RR^2$ to which we can apply \cref{lem:smallnorm} and \cref{lem:R2traces}.
	
	\begin{lem}\label{lem:fourpairs}
		There exists a set $S\subseteq \mathcal{O}^T\setminus\tau(L)$ of four elements with the following properties:
		\begin{enumerate}[label=(\alph*)]
			\item For all $\gamma\in \mathcal{O}^T\setminus (\tau(L)\cup S\cup -S)$ we have $N(\gamma)\geq D_3+D_1$.
			\item At most two elements of~$S$ have even norm.
			\item If $S$ contains two elements $\gamma_1,\gamma_2$ with $N(\gamma_1)=N(\gamma_2)<D_3+D_1$, then the remaining two elements $\gamma_3,\gamma_4\in S$ satisfy $N(\gamma_3)=N(\gamma_4)$.
		\end{enumerate}
	\end{lem}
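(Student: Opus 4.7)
The approach splits $\mathcal{O}^T\setminus\tau(L)$ according to the orthogonal decomposition $B_p^0 = V\oplus V^\perp$ with $V = \mathrm{span}_\RR(\tau(L))$. By \cref{lem:succminbasis}, vectors $\beta_1,\beta_2,\beta_3$ attaining the successive minima of $\mathcal{O}^T$ form a $\ZZ$-basis, so $\tau(L) = V\cap\mathcal{O}^T$ and every $\gamma\in\mathcal{O}^T\setminus\tau(L)$ takes the form $n\beta_3+\ell$ with $n\in\ZZ\setminus\{0\}$ and $\ell\in\tau(L)$. After replacing $\beta_3$ by its shortest representative modulo $\tau(L)$, I may assume $N(\beta_{3,V})$ equals the squared distance from $0$ to the coset $\beta_{3,V}+\tau(L)$, so it is bounded by the covering radius squared of $\tau(L)$. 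For $|n|\geq 2$, combining $N(n\beta_3+\ell) = N(n\beta_{3,V}+\ell) + n^2 N(\beta_{3,\perp})$ with the identity $N(\beta_{3,\perp}) = D_3 - N(\beta_{3,V})$ and a standard covering-radius estimate for a rank-$2$ lattice with successive minima $D_1\leq D_2$, a short calculation using $D_3\geq D_2\geq D_1$ rules out norms below $D_3+D_1$ in these layers.

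For the $n=\pm 1$ layers I apply \cref{lem:smallnorm} to the rank-$2$ lattice $\tau(L)$ with $v=-\beta_{3,V}$ and $Q=N$: this yields a translated fundamental parallelogram $P = \ell_0 + \{0,b_1,b_2,b_1+b_2\}\subseteq\tau(L)$, for some basis $b_1,b_2$ of $\tau(L)$, such that $N(\beta_3+\ell)\geq D_3+D_1$ for every $\ell\notin P$. Setting $S\coloneqq \beta_3+P$ and using that the $n=-1$ layer is the negation of the $n=1$ layer establishes (a). For (b), I use the short exact sequence $0\to 2\mathcal{O}^0\to\mathcal{O}^T\to\ZZ/2\to 0$ from \cref{eq:gross_containments}: the parity function $\epsilon\colon\mathcal{O}^T\to\ZZ/2$ is a group homomorphism whose kernel is exactly the even-norm sublattice, and since $\beta_3,b_1,b_2$ generate $\mathcal{O}^T$ they cannot all lie in $\ker\epsilon$. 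A case check over $(\epsilon(b_1),\epsilon(b_2))\in\{0,1\}^2$ shows the four parities occurring in $S$ are either all odd or split two and two.

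For (c), let $\gamma_1,\gamma_2\in S$ with $N(\gamma_1)=N(\gamma_2)<D_3+D_1$, set $w\coloneqq \gamma_1-\gamma_2\in\tau(L)\setminus\{0\}$, and let the remaining elements be $\gamma_3=\gamma_1+w'$, $\gamma_4=\gamma_2+w'$ with $\{w,w'\}$ a basis of $\tau(L)$. Direct expansion gives $N(\gamma_3)-N(\gamma_4)=2w\cdot w'$, so the claim reduces to showing $w\cdot w'=0$. The plan is to apply \cref{lem:uniquetr} to the pairs $(b_j,\gamma_i)$: since $D_1\geq 15$ forces $N(b_j)\leq D_2\leq p$ by \cref{lem:succminbounds}, and each $\gamma_i$ is of minimum norm in $\gamma_i+\ZZ b_j$ (shifts outside $S$ have norm at least $D_3+D_1$ by part (a)), the congruence in \cref{prop:kaneko} pins down $|\gamma_i\cdot b_j|$ as a function of $N(\gamma_i)$ and $N(b_j)$ alone. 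Thus $|\gamma_1\cdot b_j|=|\gamma_2\cdot b_j|$ for $j=1,2$, and \cref{lem:R2traces} applied inside $V$ either forces $b_1\cdot b_2=0$ (so $w\cdot w'=0$ immediately) or forces $\gamma_{1,V}=-\gamma_{2,V}$, placing the midpoint $(\gamma_1+\gamma_2)/2$ in $V^\perp$ and producing a vector of $\mathcal{O}^T\cap V^\perp$ whose constrained norm can be reconciled with the third successive minimum to conclude.

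The main obstacle will be making (c) watertight. Two subtleties need care: the minimum-norm hypothesis of \cref{lem:uniquetr} in a given $b_j$-direction may fail if some neighbor of $\gamma_i$ inside $S$ is strictly shorter, requiring local reselection of representative; and the degenerate case $\gamma_{1,V}=-\gamma_{2,V}$ from \cref{lem:R2traces} corresponds to $v=-\beta_{3,V}$ lying on an edge of the triangulation underlying \cref{lem:smallnorm}, where the separating set is non-unique. I expect to handle both by using the flexibility in choosing the Minkowski-reduced basis $b_1,b_2$ and swapping triangulation cells at boundary points, which should force the orthogonality $w\cdot w'=0$ demanded by condition (c).
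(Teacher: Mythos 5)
Your treatment of (a) and (b) is essentially the paper's: the same splitting into layers $a\beta_3+\tau(L)$, the covering-radius bound for $|a|\geq 2$, \cref{lem:smallnorm} for $a=\pm1$, and for (b) the observation that norm-parity is additive on $\mathcal{O}^T$ (because $\Tr(\beta_1\bar\beta_2)$ is even) while $\mathcal{O}^T$ contains an odd-norm element. Those parts are fine.

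The gap is in (c), and it is the one you flagged yourself but did not close. The minimality hypothesis of \cref{lem:uniquetr} genuinely fails here: the only competitor to $\gamma_i$ on the line $\gamma_i+\ZZ b_j$ that is not already excluded by (a) is its neighbouring vertex of $S$, which can be strictly shorter, and ``reselecting the representative'' does not help --- after reselection you are computing the trace of a \emph{different} element, and $\gamma_1$ and $\gamma_2$ may require different reselections, destroying exactly the symmetry $|\Tr(\gamma_1\bar b_j)|=|\Tr(\gamma_2\bar b_j)|$ you need. The paper's proof avoids \cref{lem:uniquetr} entirely at this step: since $\gamma_i\pm\beta_j\in\mathcal{O}^T\setminus\tau(L)$, it has norm at least $D_3$, which together with $N(\gamma_i)<D_3+D_1$ and $N(\beta_j)\leq D_2$ gives the weaker a priori bound $|\Tr(\gamma_i\bar\beta_j)|<D_1+D_2<2p$; one then invokes the mod-$4p$ congruence of \cref{prop:kaneko} and the uniqueness of square roots modulo $2p$ in $[0,p)$ (cyclicity of $(\ZZ/2p\ZZ)^\times$) to pin down $|\tfrac12\Tr(\gamma_i\bar\beta_j)|$ from $N(\gamma_i)N(\beta_j)$ alone. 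Note also that the paper runs this with $\beta_1,\beta_2$ (which attain the successive minima of $\tau(L)$, hence have norm $<p$, which both feeds the bound above and guarantees $\Tr(\beta_1\bar\beta_2)\neq 0$), not with the reduced basis $b_1,b_2$ produced inside \cref{lem:smallnorm}, whose norms you have not controlled.

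Your endgame for (c) is also inverted. The conclusion $v_1=-v_2$ of \cref{lem:R2traces} is not a degenerate boundary case to be repaired by re-triangulating; it is the desired output, from which the paper concludes immediately that the other two vertices of the (projected) parallelogram also have equal norms. By contrast, your reduction of (c) to $w\cdot w'=0$ asks for the two edge vectors of a fundamental parallelogram of $\tau(L)$ to be orthogonal, which is generally false and is not what the statement requires; and the branch ``$b_1\cdot b_2=0$'' of your trichotomy cannot be invoked as a shortcut, since for $\beta_1,\beta_2$ it is excluded by \cref{prop:kaneko}. As written, part (c) of your argument does not go through.
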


	\begin{figure}[h]
		\begin{center}
			\begin{tikzpicture}
				\pgfmathsetmacro{\bx}{2};
				\pgfmathsetmacro{\by}{0};
				\pgfmathsetmacro{\cx}{0.5};
				\pgfmathsetmacro{\cy}{0.4};
				\pgfmathsetmacro{\vx}{0.7};
				\pgfmathsetmacro{\vy}{0.2};
				\pgfmathsetmacro{\uy}{2.3};
				
				\pgfmathsetmacro{\Tblx}{-1.2*\bx-2*\cx};
				\pgfmathsetmacro{\Tbly}{-1.2*\by-2*\cy};
				\pgfmathsetmacro{\Tbrx}{-1.2*\bx+2*\cx};
				\pgfmathsetmacro{\Tbry}{-1.2*\by+2*\cy};
				\pgfmathsetmacro{\Ttlx}{2*\bx-2*\cx};
				\pgfmathsetmacro{\Ttly}{2*\by-2*\cy};
				\pgfmathsetmacro{\Ttrx}{2*\bx+2*\cx};
				\pgfmathsetmacro{\Ttry}{2*\by+2*\cy};
				
				\foreach \k in {-1,...,1}
				{
					\draw  (\Tblx,\k*\uy+\Tbly) edge (\Ttlx, \k*\uy+\Ttly);
					\draw  (\Ttrx,\k*\uy+\Ttry) edge (\Ttlx, \k*\uy+\Ttly);
					\draw  (\Tblx,\k*\uy+\Tbly) edge (\Tbrx, \k*\uy+\Tbry);
					\draw  (\Tbrx,\k*\uy+\Tbry) edge (\Ttrx,\k*\uy+ \Ttry);
				}
				
				\filldraw[gray!20]  (\vx,\vy+\uy) -- (\vx-\bx,\vy+\uy-\by) -- (\vx-\bx-\cx,\vy+\uy-\by-\cy) -- (\vx-\cx,\vy+\uy-\cy) -- (\vx,\vy+\uy);
				\filldraw[gray!20]  (-\vx,-\vy-\uy) -- (-\vx+\bx,-\vy-\uy+\by) -- (-\vx+\bx+\cx,-\vy-\uy+\by+\cy) -- (-\vx+\cx,-\vy-\uy+\cy) -- (-\vx,-\vy-\uy);
				\filldraw[gray!20]  (0,0) -- (\bx,\by) -- (\bx+\cx,\by+\cy) -- (\cx,\cy) -- (0,0);
				
				\draw[dashed]  (\vx,\vy) edge (\vx, \vy+\uy);
				
				\draw[fill=black] (0,0) circle (2pt) node[anchor=90]{$0$};
				\draw[fill=black] (\bx,\by) circle (2pt);
				\draw[fill=black] (\cx,\cy) circle (2pt);
				\draw[fill=black] (\bx+\cx,\by+\cy) circle (2pt);
				\draw[fill=white] (\vx,\vy) circle (2pt) node[anchor=190]{$v$};
				\node[anchor=190] at (\vx,\vy/2+\uy/2-.2) {$u$};
				\node at (\bx/2+\cx/2+0.3,\by/2+\cy/2) {$P$};

				\draw[fill=black] (\vx,\vy+\uy) circle (2pt) node[anchor=180]{$\beta_3$};
				\draw[fill=black] (\vx-\bx,\vy+\uy-\by) circle (2pt);
				\draw[fill=black] (\vx-\cx,\vy+\uy-\cy) circle (2pt);
				\draw[fill=black] (\vx-\bx-\cx,\vy+\uy-\by-\cy) circle (2pt);
				\node at (\vx-\bx/2-\cx/2,\vy+\uy-\by/2-\cy/2) {$S$};

				\draw[fill=black] (-\vx,-\vy-\uy) circle (2pt) node[anchor=0]{$-\beta_3$};
				\draw[fill=black] (-\vx+\bx,-\vy-\uy+\by) circle (2pt);
				\draw[fill=black] (-\vx+\cx,-\vy-\uy+\cy) circle (2pt);
				\draw[fill=black] (-\vx+\bx+\cx,-\vy-\uy+\by+\cy) circle (2pt);
				\node at (-\vx+\bx/2+\cx/2,-\vy-\uy+\by/2+\cy/2) {$-S$};
				
				\node at (3.5,0.5) {$\tau(L)$};
				\node at (3.5,\uy+0.5) {$\beta_3+\tau(L)$};
				\node at (3.5, -\uy+0.5) {$-\beta_3+\tau(L)$};
			\end{tikzpicture}
		\end{center}
		\caption{Setup for the proof of \cref{lem:fourpairs}. Black dots correspond to elements of $\mathcal{O}^T$. The sets $P$, $S$, and $-S$ are the vertices of the parallelograms with the corresponding labels.}\label{fig:3dpic}
	\end{figure}
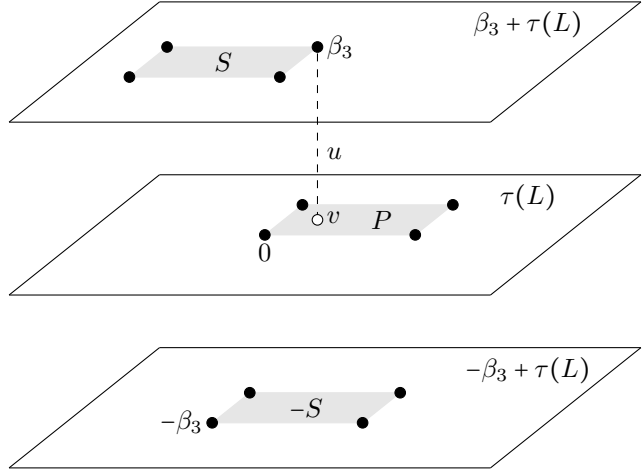
	
	\begin{proof}
		Let $\beta_1,\beta_2,\beta_3\in\mathcal{O}^T$ attain the successive minima $D_1,D_2,D_3$. By \cref{lem:succminbasis}, these elements form a basis of~$\mathcal{O}^T$, so the quotient~$\mathcal{O}^T/\tau(L)$ is  an infinite cyclic group generated by $\beta_3$. \black Let~$v$ be the orthogonal projection of~$\beta_3$ onto $\RR\tau(L)$, so $u\coloneqq\beta_3-v$ is orthogonal to~$\RR\tau(L)$.  Note that if we had $N(v-w)<N(v)$ for some $w\in \tau(L)$, this would imply
		\[N(\beta_3-w)=N(v-w)+N(u)<N(v)+N(u)= N(\beta_3),\]
		contradicting the fact that~$\beta_3$ attains the third successive minimum. Hence $N(v)\leq N(v-w)$ for all $w\in\tau(L)$. In particular,~$N(v)$ is bounded by the square of the covering radius of $\tau(L)$; since the covering radius is bounded by $\frac{\sqrt{2}}{2}\sqrt{D_2}$ we have  $N(v)\leq\frac12 D_2$.
		
		Now any $\gamma\in\mathcal{O}^T\setminus \tau(L)$ can be written in the form
		\[\gamma=a\beta_3-w=au+(av-w)\]
		for some $a\in\ZZ\setminus \{0\}$ and $w\in\tau(L)$. If $|a|\geq 2$ then using $N(v)\leq\frac12D_2\leq \frac12D_3$ we have
		\[N(\gamma)=|a|^2N(u)+N(av-w)\geq 4N(u)= 4(N(\beta_3)-N(v))\geq 2D_3\geq D_3+D_1.\]
		On the other hand, suppose $a=1$, so that $\gamma=u+(v-w)$. Then by \cref{lem:smallnorm}, there exists  $P\subseteq \tau(L)$ forming the vertices of a translated fundamental parallelogram for~$\tau(L)$ such that for all $w\in\tau(L)\setminus P$ \black we have
		\[N(\gamma)=N(u)+N(v-w)\geq N(u)+N(v)+D_1=D_3+D_1.\]
		In other words, we have $N(\gamma)\geq D_3+D_1$ provided $\gamma\notin \beta_3-P$. Finally, if $a=-1$ (so $\gamma=-u+(-v-w)$), the same argument shows that $N(\gamma)\geq D_3+D_1$ unless $\gamma\in -\beta_3+P$. \black So taking $S\coloneqq\beta_3-P$ as in \cref{fig:3dpic}, (a) follows.
		
		Now for the sake of contradiction suppose~$N(\gamma_i)$ is even for three elements $\gamma_1,\gamma_2,\gamma_3\in S$. The points $w_i=\beta_3-\gamma_i$ lie on~$P$, so their pairwise differences $\gamma_i-\gamma_j=w_j-w_i$ (for $i,j\in\{1,2,3\}$) contain a basis for~$\tau(L)$. This implies that $\langle \gamma_1,\gamma_2,\gamma_3\rangle=\mathcal{O}^T$. But since~$\Tr(uv)$ is even for all $u,v\in\mathcal{O}^T$  (\cref{eq:eventrace}), \black the set of points in~$\mathcal{O}^T$ with even norm is closed under addition. Hence every element of~$\mathcal{O}^T$ must have even norm. This is a contradiction, because~$\mathcal{O}$ contains an element of odd trace (see \cref{eq:normtau} and the discussion after \cref{eq:gross_containments}). Thus (b) must hold.
		
		Finally, suppose $N(\gamma_1)=N(\gamma_2)<D_3+D_1$ for some distinct $\gamma_1,\gamma_2\in S$. For each $i,j\in\{1,2\}$ we have 
		\[D_3\leq N(\gamma_i\pm \beta_j)=N(\gamma_i)+N(\beta_j)\pm \Tr(\gamma_i\bar{\beta_j})<(D_3+D_1)+D_2\pm \Tr(\gamma_i\bar{\beta_j}),\]
		and, since $D_1,D_2<p$ by \cref{lem:succminbounds}  (recall we are assuming $D_1\geq 15$),  we have
		$|\Tr(\gamma_i\bar{\beta_j})|<2p$. We also have $|\frac{1}{2}\Tr(\gamma_i\bar{\beta_j})|^2\equiv N(\gamma_i)N(\beta_j)\pmod{4p}$ by \cref{prop:kaneko}. So $|\frac{1}{2}\Tr(\gamma_i\bar{\beta_j})|$ equals the square root modulo~$2p$ of $N(\gamma_i)N(\beta_j)$ in the interval~$[0,p)$, which is unique because~$(\ZZ/2p\ZZ)^\times$ is cyclic.

		For $i=1,2$ let $v_i$ be the projection of $\gamma_i$ onto $\RR\tau(L)$, so that $\gamma_i=u+v_i$ with $u$ orthogonal to $v_i$; note that $v_1\neq v_2$. Since $N(\gamma_1)=N(\gamma_2)$ we have $N(v_1)=N(v_2)$, and for $j=1,2$ we have
		\[|\Tr(v_1\bar{\beta_j})|=|\Tr(\gamma_1\bar{\beta_j})|=|\Tr(\gamma_2\bar{\beta_j})|=|\Tr(v_2\bar{\beta_j})|.\]
		Further, we have $0<N(\beta_1),N(\beta_2)<p$ by \cref{lem:succminbounds}, and so $\Tr(\beta_2\bar{\beta_1})\neq 0$ by \cref{prop:kaneko}. So by \cref{lem:R2traces}, we can conclude $v_1=-v_2$. Now if $\gamma_3,\gamma_4\in S$ are the remaining two elements with $\gamma_3=u+v_3$ and $\gamma_4=u+v_4$, then we must have $v_3=-v_4$ because the four elements of~$S$ form the vertices of a parallelogram. Hence $N(\gamma_3)=N(\gamma_4)$, proving (c).
	\end{proof}
	
	\begin{lem}
		Suppose $D_1\geq 15$, and let 
		\[(\theta_\mathcal{O}-\theta_L)(q)=c_nq^n+c_{n+1}q^{n+1}+\cdots\]
		for some $n\geq 0$. If $c_n=2$, or if $c_n=4$ and $c_{n+1}\geq 8$, then $D_3=4n$; otherwise $D_3=4n-1$.
	\end{lem}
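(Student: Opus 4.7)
The proof proceeds by applying \cref{lem:thetadecomp} to expand $\theta_\mathcal{O} - \theta_L$ in terms of counts on the Gross lattice side. Define
\[
a_k \coloneqq \#\{\beta \in \mathcal{O}^T \setminus \tau(L) : N(\beta) = 4k\}, \quad b_k \coloneqq \#\{\beta \in \mathcal{O}^T \setminus \tau(L) : N(\beta) = 4k-1\};
\]
both are even because $\beta \mapsto -\beta$ is a norm-preserving fixed-point-free involution on $\mathcal{O}^T \setminus \tau(L)$. Since $c_n q^n$ is the first nonzero term, $a_m = b_m = 0$ for $m < n$, and then
\[
c_n = a_n + 2b_n, \qquad c_{n+1} = a_{n+1} + 2b_{n+1} + 2a_n.
\]
Moreover, $D_3 = 4n-1$ precisely when $b_n > 0$; otherwise $D_3 = 4n$.

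By \cref{lem:fourpairs}(a), every element of $\mathcal{O}^T \setminus \tau(L)$ with norm below $D_3 + D_1 \geq 4n + 14$ lies in $S \cup -S$, and by part (b) at most four of these elements have even norm, so $a_n \leq 4$. This handles the extreme cases immediately: $c_n = 2$ forces $(a_n, b_n) = (2,0)$ and hence $D_3 = 4n$, while $c_n \geq 6$ forces $b_n \geq 2$ (using $a_n \leq 4$ together with parity) and hence $D_3 = 4n-1$. The crux is therefore the case $c_n = 4$, where parity leaves $(a_n, b_n) \in \{(4,0), (0,2)\}$. In the first sub-case, $D_3 = 4n$ and automatically $c_{n+1} \geq 2a_n = 8$. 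The goal is then to prove that in the second sub-case ($D_3 = 4n-1$), we have $c_{n+1} < 8$.

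For this, write $S = \{\alpha_0, \alpha_0-u_1, \alpha_0-u_2, \alpha_0-u_1-u_2\}$ using the parallelogram construction from the proof of \cref{lem:smallnorm}, and examine how the pair $\gamma, -\gamma$ of norm-$(4n-1)$ elements sits inside $S \cup -S$. When exactly one of $\gamma, -\gamma$ lies in $S$, the remaining three elements of $S$ must have pairwise distinct norms below $D_3+D_1$ by part (c) of \cref{lem:fourpairs}: any repeat would force the ``other two'' in (c) to include $\gamma$, producing a second norm-$(4n-1)$ pair and contradicting $b_n = 2$. This bounds the number of elements of $\mathcal{O}^T\setminus\tau(L)$ of norms $4n+3$ and $4n+4$ by $2$ each, so $c_{n+1} \leq 2\cdot 2 + 2 = 6$. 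When both $\gamma, -\gamma \in S$, part (c) forces the other two elements $s_3, s_4 \in S$ to have a common norm. If $s_4 = -s_3$ then $S = -S$, so the set of norm-$(<D_3+D_1)$ elements of $\mathcal{O}^T\setminus\tau(L)$ is exactly $\{\gamma, -\gamma, s_3, -s_3\}$, giving $c_{n+1} \leq 4$. Otherwise, enumerating the six possible pair-sums in $S$ shows that (up to relabeling $u_1,u_2$) $\alpha_0 = u_1/2$ and $\{s_3, s_4\} = \{u_1/2 - u_2,\, -u_1/2 - u_2\}$; the equality $N(s_3)=N(s_4)$ then forces $u_1 \perp u_2$, and the common norm equals $(4n-1) + N(u_2) \geq (4n-1) + D_1 \geq 4n+14$, so neither $s_3$ nor $s_4$ contributes to $c_{n+1}$ and we obtain $c_{n+1} = 0$. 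The main obstacle is precisely this last ``edge-pair'' configuration, where the degenerate geometry must be controlled using the arithmetic bound $N(u_2) \geq D_1 \geq 15$, valid because $u_2 \in \tau(L)\setminus\{0\}$ and $D_1$ is the minimum nonzero norm of $\tau(L)$.
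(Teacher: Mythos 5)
Your proof is correct and takes essentially the same route as the paper's: decompose $\theta_\mathcal{O}-\theta_L$ via \cref{lem:thetadecomp} and use \cref{lem:fourpairs}(a)--(c) to pin down the possible multiplicities of norms $4n-1$, $4n$, $4n+3$, $4n+4$ in $S\cup(-S)$, the key point in both arguments being that the presence of a norm-$(4n-1)$ pair with $c_n=4$ forces $c_{n+1}\le 6$. One small remark: your sub-case in which both $\gamma$ and $-\gamma$ lie in $S$ is vacuous, since $S\cap(-S)=\emptyset$ (an equality $\beta_3-w_1=-(\beta_3-w_2)$ with $w_1,w_2\in P$ would give $2\beta_3\in\tau(L)$, impossible because $\beta_3$ generates the infinite cyclic group $\mathcal{O}^T/\tau(L)$), so that entire paragraph can be deleted.
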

	
	\begin{proof}
		As in \cref{lem:determine_D1}, the existence of an element of norm~$n$ in $\mathcal{O}\setminus L$  (and no smaller norm)  guarantees that~$D_3$ is equal to either~$4n-1$ or~$4n$. Let $S\subseteq \mathcal{O}^T\setminus\tau(L)$ be as in \cref{lem:fourpairs}. Since $N(\gamma)\geq D_3+15>4n+4$ for all $\gamma\in\mathcal{O}^T\setminus(\tau(L)\cup S\cup -S)$, the series $\theta_{\mathcal{O}}(q)-\theta_{L}(q)$ is congruent to
		\begin{equation}\label{eq:thetadiff}
			\left(\sum_{\substack{\beta\in S\\N(\beta)\equiv 0\;\mathrm{mod}\;4}} 2q^{N(\beta)/4}\right)\theta_0(q)+\left(\sum_{\substack{\beta\in S\\N(\beta)\equiv 3\;\mathrm{mod}\;4}} 2q^{(1+N(\beta))/4}\right)\theta_1(q)\pmod{q^{n+2}}.
		\end{equation}
		We divide into cases based on the number of elements of norm~$4n-1$ and~$4n$ in~$S$. First we consider the cases that~$S$ has no elements of norm~$4n-1$, so that $D_3=4n$. Then~$S$ contains either one or two elements of norm~$4n$:~$S$ cannot contain  more than two  elements of norm~$4n$ by \cref{lem:fourpairs}(b). Recall that $\theta_0(q)\equiv 1+2q\pmod{q^2}$ and $\theta_1(q)\equiv 2\pmod{q^2}$.
		\begin{itemize}
			\item If~$S$ contains one element of norm~$4n$, this element contributes~$2q^n\theta_0(q)$ to \cref{eq:thetadiff} and every other element of~$S$ contributes a multiple of~$q^{n+1}$, so $c_n=2$.
			\item If~$S$ contains two elements of norm~$4n$, these elements contribute~$4q^n\theta_0(q)$ to \cref{eq:thetadiff} and every other element of~$S$ contributes a multiple of~$q^{n+1}$, so $c_n=4$ and $c_{n+1}\geq 8$.
		\end{itemize}
		Now consider the cases that~$S$ has at least one element of norm~$4n-1$, so that~$D_3=4n-1$.
		\begin{itemize}
			\item If~$S$ contains at least two elements of norm~$4n-1$, these contribute~$4q^n\theta_1(q)$ to \cref{eq:thetadiff}, so $c_n\geq 8$.
			\item If~$S$ contains at least one element of norm~$4n-1$ and at least one element of norm~$4n$, these contribute~$2q^n\theta_0(q)+2q^n\theta_1(q)$ to \cref{eq:thetadiff}, so $c_n\geq 6$.
			\item Suppose~$S$ contains exactly one element of norm~$4n-1$, and no elements of norm~$4n$. The element of norm~$4n-1$ contributes~$2q^n\theta_1(q)$ to \cref{eq:thetadiff}, so $c_n=4$. Now by \cref{lem:fourpairs}(c),~$S$ contains at most one element of norm~$4n+3$ and at most one element of norm~$4n+4$. Thus the largest possible value of~$c_{n+1}$ is attained by 
			\[2q^n\theta_1(q)+2q^{n+1}\theta_0(q)+2q^{n+1}\theta_1(q),\]
			which has $c_{n+1}=6$. In any case we will have $c_n=4$ and $c_{n+1}\leq 6$.
		\end{itemize}
		Combining these results, we see that if $D_3=4n$ then either we have $c_n=2$ or we have $c_n=4$ and $c_{n+1}\geq 8$. When $D_3=4n-1$, then either we have $c_n\geq 6$ or we have $c_n=4$ and $c_{n+1}\leq 6$.
	\end{proof}

	In conclusion, the theta function of~$\mathcal{O}$ uniquely determines the successive minima $D_1,D_2,D_3$ of~$\mathcal{O}^T$. By \cref{thm:succmin_to_iso}, these values uniquely determine the isomorphism type of~$\mathcal{O}$, establishing \cref{thm:theta_to_iso}.

				\printbibliography

			\end{document}